\numberwithin{equation}{section}
\newcommand{\ii}{\mathbf i}
\newcommand{\jj}{\mathbf j}
\newcommand{\cO}{\mathcal O}
\newcommand{\tH}{\tilde H}
\newcommand{\tmu}{\tilde\mu}
\theoremstyle{plain}
\newtheorem{theorem}[equation]{Theorem}
\newtheorem{proposition}[equation]{Proposition}
\newtheorem{lemma}[equation]{Lemma}
\theoremstyle{definition}
\newtheorem{definition}[equation]{Definition}
\theoremstyle{remark}
\newtheorem{Example}[equation]{Example}
\newtheorem{remark}[equation]{Remark}
\newenvironment{example}{\begin{Example}\pushQED{\qee}}{\popQED\end{Example}}
\DeclareRobustCommand{\qee}{%
  \ifmmode \mathqee
  \else
    \leavevmode\unskip\penalty9999 \hbox{}\nobreak\hfill
    \quad\hbox{\qeesymbol}%
  \fi
}
\newcommand{\mathqee}{\quad\hbox{\qeesymbol}}
\newcommand{\qeesymbol}{\ensuremath\diamondsuit}
\newcommand{\Lie}[1]{\textsl{#1}}
\newcommand{\lie}[1]{\mathfrak{#1}}
\newcommand{\GL}{\Lie{GL}}
\newcommand{\sL}{\lie{sl}}
\newcommand{\su}{\lie{su}}
\newcommand{\kf}{\lie k}
\newcommand{\q}{\lie q}
\newcommand{\tf}{\lie t}
\newcommand{\SL}{\Lie{SL}}
\newcommand{\SU}{\Lie{SU}}
\newcommand{\Un}{\Lie{U}}
\newcommand{\twist}{\mathcal{Z}}
\newcommand\C{{\mathbb C}}
\newcommand{\HH}{{\mathbb H}}
\newcommand{\PP}{{\mathbb P}}
\newcommand{\R}{{\mathbb R}}
\newcommand{\liek}{\lie k}
\newcommand{\hkimpl}{{\mathrm{hkimpl}}}
\newcommand{\impl}{{\mathrm{impl}}}
\newcommand{\hks}{{\mathrm{hks}}}
\newcommand{\symp}{{\sslash}} 
\newcommand{\hkq}{{\sslash\mkern-6mu/}}
\DeclareMathOperator{\Hom}{Hom}
\DeclareMathOperator{\LIE}{Lie}
\DeclareMathOperator{\Nil}{Nil}
\DeclareMathOperator{\Spec}{Spec}
\DeclareMathOperator{\tr}{tr}
\DeclarePairedDelimiter{\abs}{\lvert}{\rvert}
\newcommand{\eqbreak}[1][2]{\\&\hskip#1em}
\begin{document}

\title{Twistor spaces for hyperk\"ahler implosions}

\author{Andrew Dancer}
\address[Dancer]{Jesus College\\
Oxford\\
OX1 3DW\\
United Kingdom\\} \email{dancer@maths.ox.ac.uk}

\author{Frances Kirwan}
\address[Kirwan]{Balliol College\\
Oxford\\
OX1 3BJ\\
United Kingdom\\} \email{kirwan@maths.ox.ac.uk}

\author{Andrew Swann}
\address[Swann]{Department of Mathematics\\
Aarhus University\\
Ny Munkegade 118, Bldg 1530\\
DK-8000 Aarhus C\\
Denmark\\
\textit{and}\\
CP\textsuperscript3-Origins,
Centre of Excellence for Cosmology and Particle Physics Phenomenology\\
University of Southern Denmark\\
Campusvej 55\\
DK-5230 Odense M\\
Denmark\\} \email{swann@imf.au.dk}
\dedicatory{\large Dedicated to the memory of Friedrich Hirzebruch}

\begin{abstract}
  We study the geometry of the twistor space of the universal
  hyperk\"ahler implosion \( Q \) for \( \SU(n) \).  Using the
  description of \( Q \) as a hyperk\"ahler quiver variety, we
  construct a holomorphic map from the twistor space \( \mathcal Z_Q
  \) of \( Q \) to a complex vector bundle over \( \mathbb P^1 \), and
  an associated map of \( Q \) to the affine space \( \mathcal R \) of
  the bundle's holomorphic sections.  The map from \( Q \) to \(
  \mathcal R \) is shown to be injective and equivariant for the
  action of \( \SU(n) \times T^{n-1} \times \SU(2) \).  Both maps,
  from \( Q \) and from \( \mathcal Z_Q \), are described in detail
  for \( n=2 \) and \( n=3 \).  We explain how the maps are built from
  the fundamental irreducible representations of \( \SU(n) \) and the
  hypertoric variety associated to the hyperplane arrangement given by
  the root planes in the Lie algebra of the maximal torus.  This
  indicates that the constructions might extend to universal
  hyperk\"ahler implosions for other compact groups.
\end{abstract}

\subjclass{53C26, 53D20, 14L24}

\maketitle

\setcounter{section}{-1}

\section{Introduction}
\label{sec:introduction}
 
In \cite{DKS,DKS2} we introduced a hyperk\"ahler analogue in the case
of \( \SU(n) \) actions of Guillemin, Jeffrey and Sjamaar's
construction of \emph{symplectic implosion}~\cite{GJS}. The aim of this
paper is to find a description of hyperk\"ahler 
implosion for \( \SU(n) \) which can
be generalised to other compact groups \( K \).

If \( M \) is a symplectic manifold with a Hamiltonian action of a
compact group \( K \) with maximal torus \( T \), the symplectic
implosion \( M_\impl \) is a stratified symplectic space (usually
singular) with an action of \( T \) such that the symplectic
reductions of \( M_\impl \) by \( T \) coincide with the symplectic
reductions of \( M \) by \( K \).  The implosion of the cotangent
bundle \( T^*K \cong K \times \lie k^*\) acts as a universal
symplectic implosion in that the implosion of a general Hamiltonian \(
K \)-manifold \( M \) can be identified with the symplectic reduction
by \( K \) of \( M \times (T^*K)_\impl \).

The construction in \cite{DKS} of the universal hyperk\"ahler
implosion when \( K=\SU(n) \) uses quiver diagrams, and gives us a
stratified hyperk\"ahler space \( Q =(T^*K_\C)_{\hkimpl}\).  The
hyperk\"ahler implosion of a general hyperk\"ahler manifold \( M \)
with a Hamiltonian action of \( K=\SU(n) \) is then defined as the
hyperk\"ahler reduction by \( K \) of \( M \times Q \).  The
hyperk\"ahler strata of \( Q \) can be described in terms of open sets
in complex symplectic quotients of the cotangent bundle of \(
K_\C=\SL(n, \C) \) by subgroups which are extensions of abelian groups
by commutators of parabolic subgroups.  There is an action of the
maximal torus \( T \) of \( K \), and the hyperk\"ahler quotients by
this action are the Kostant varieties, affine varieties which are
closures in \( \lie k_\C^* \) of complex coadjoint orbits.  As in
\cite{DKS} we will identify Lie algebras with their duals via an
invariant inner product, so that coadjoint orbits of \( K_\C \) are
identified with adjoint orbits. 

The universal symplectic implosion has a natural \( K \times T
\)-equivariant embedding into a complex affine space, whose image is
the \( K \)-sweep \( K \overline{T_\C v} \) of the closure of an orbit
\( T_\C v \) of the complexified maximal torus \( T_\C \)
\cite{GJS}. Here \( \overline{T_\C v} \) is the toric variety
associated to a positive Weyl chamber \( \lie{t}_+ \) in the Lie
algebra \( \lie{t} \) of \( T \). It was shown in \cite{DKS2} that the
hypertoric variety associated to the hyperplane arrangement given by
the root planes in \( \lie{t} \) maps generically injectively to \( Q
\) and that (for any choice of complex structure on \( Q \)) the \(
K_\C \)-sweep \( K_\C Q_T \) of its image \( Q_T \) is dense in \( Q
\). In this paper we shall construct a \( K \times T \)-equivariant
embedding \( \sigma \) of the universal hyperk\"ahler implosion \( Q
\) for \( K=\SU(n) \) into a complex affine space \( \mathcal{R} \)
with a natural \( \SU(2) \)-action which rotates the complex
structures on \( Q =(T^*K_\C)_{\hkimpl}\). This embedding is
constructed using the moment maps for the \( K \times T \) action and
the fundamental irreducible representations of \( K \). Its image is
the closure of the \( K_\C \)-sweep of the image \( \sigma(Q_T) \) in
\( \mathcal{R} \) of the hypertoric variety associated to the
hyperplane arrangement given by the root planes in \( \lie{t} \).  Our
future aim is to use this description of \( Q \) as the closure of the
\( K_\C \)-sweep of the image of this hypertoric variety in the \( K
\times T \times \SU(2) \)-representation \( \mathcal{R} \) to extend
the hyperk\"ahler implosion construction from \( K=\SU(n) \) to more
general compact groups \( K \).
 
Any hyperk\"ahler manifold \( M \) has a twistor space \( \twist_M \),
which is a complex manifold with additional structure from which we
can recover \( M \). As a smooth manifold \( \twist_M \) is the
product \( M \times \PP^1 \) of \( M \) and the complex projective
line \( \PP^1 \), which is identified with the unit sphere \( S^2 \)
in \( \R^3 \) in the usual way. The complex structure on \( \twist_M
\) is such that the projection \( \pi\colon \twist_M \to \PP^1 \) is
holomorphic and its fibre at any \( \zeta \in \PP^1 = S^2 \) is \( M
\) equipped with the complex structure determined by \( \zeta \).  

We shall give a description of the twistor space for the universal
hyperk\"ahler implosion for \( \SU(n) \). Motivated by the embedding
of the implosion into the affine space \( \mathcal{R} \) described above,
we shall also construct a generically injective holomorphic map from
its twistor space to a vector bundle over \( \PP^1 \).
 
The layout of the paper is as follows. In \S \ref {sec:twistor-spaces} we
review the theory of twistor spaces for hyperk\"ahler manifolds, and
in \S \ref {sec:nilpotent-cone} we recall the hyperk\"ahler structure on
the nilpotent cone in the Lie algebra of \( K_\C=\SL(n, \C) \)
obtained in \cite{KS}. We also describe its twistor space which can be
embedded in the vector bundle \( \cO(2) \otimes \lie k_\C \) over \(
\PP^1 \), where \( \lie k_\C \) is the Lie algebra of the
complexification \( K_\C=\SL(n, \C) \) of \( K = \SU(n) \). In
\S \ref {sec:symplectic-implosion} we recall the constructions of
symplectic implosion from \cite{GJS} and hyperk\"ahler implosion for
\( K=\SU(n) \) from \cite{DKS,DKS2}. In
\S \ref {sec:towards-an-embedding} we define a \( K \times T \times
\SU(2) \)-equivariant map \( \sigma \) from the universal
hyperk\"ahler implosion \( Q \) for \( K=\SU(n) \) to
\begin{equation*}
  \mathcal{R} = 
  H^0(\PP^1,
  (\cO(2) \otimes (\lie k_\C \oplus \lie t_\C)) \oplus 
  \bigoplus_{j=1}^{n-1}  \cO(\ell_j) \otimes \wedge^j\C^n)
\end{equation*}
where $\ell_j = j(n-j)$,
and an associated holomorphic map \( \tilde{\sigma} \) from the
twistor space \( \twist_Q \) of \( Q \) to the vector bundle \(
(\cO(2) \otimes (\lie k_\C \oplus \lie t_\C)) \oplus
\bigoplus_{j=1}^{n-1} \cO(\ell_j) \otimes \wedge^j\C^n \) over \( \PP^1
\). Here \( \SU(2) \) acts on \( \mathcal{R} \) via its usual action
on \( \PP^1 \) and the line bundles \( \cO(\ell_j) \) over \( \PP^1 \) for
\( j \in \mathbb{Z} \), while \( K = \SU(n) \) acts on \( \mathcal{R}
\) via its adjoint action on \( \liek_\C = \liek \otimes_\R \C \), the
trivial action on \( \lie{t}_\C \) and its usual action on \( \wedge^j
\C^n \). The action of \( T \) on \( \cO(2) \otimes (\lie k_\C \oplus
\lie t_\C) \) is the restriction of the \( K \)-action, but \( T \)
acts on \( \cO(\ell_j) \otimes \wedge^j\C^n \) as multiplication by the
highest weight for the irreducible representation \( \wedge^j\C^n \)
of \( K= \SU(n) \).

In \S \ref {sec:strat-univ-hyperk} we recall the stratification given in
\cite{DKS} of \( Q \) into strata which are hyperk\"ahler manifolds,
and its refinement in \cite{DKS2} which has strata \( Q_{[\sim,\cO]}
\) indexed in terms of Levi subgroups and nilpotent orbits in \(
K_\C=\SL(n, \C) \).  In \S \ref {sec:embeddings} we prove that the map \(
\sigma \) defined in \S \ref {sec:towards-an-embedding} is injective and
that \( \tilde{\sigma} \) is generically injective; we will see that
\( \tilde{\sigma} \) fails to be injective in the example \( n=2 \) in
\S \ref {sec:twist-space-univ}. In \S \ref {sec:twist-space-univ} we
describe the full structure of the twistor space of \( Q \) in terms
of its embedding in the space of holomorphic sections of the vector
bundle \( (\cO(2) \otimes (\lie k_\C \oplus \lie t_\C)) \oplus
\bigoplus_{j=1}^{n-1} \cO(\ell_j) \otimes \wedge^j\C^n \) over \( \PP^1 \),
and we consider the low-dimensional examples \( n=2 \) and \( n=3 \) in
detail. Finally in \S \ref {sec:general-compact-K} we consider how to use
the description of \( Q \) as the closure of the \( K_\C \)-sweep of the image
of a hypertoric variety in the \( K \times T \times
\SU(2) \)-representation \( \mathcal{R} \) and the corresponding description
of its twistor space \( \twist_Q \) to extend the hyperk\"ahler implosion
construction from \( K=\SU(n) \) to more general compact groups \( K \).

\subsubsection*{Acknowledgements.} The work of the second author was
supported by a Senior Research Fellowship of the Engineering and
Physical Sciences Research Council (grant number GR/T016170/1) during
much of this project. The third author is partially supported by the
Danish Council for Independent Research, Natural Sciences.

\section{Twistor spaces}
\label{sec:twistor-spaces}

In this section we review the theory of twistor spaces for
hyperk\"ahler manifolds; for more details see \cite{HKLR}.

A hyperk\"ahler manifold \( M \) has a Riemannian metric, together
with a triple of complex structures \( (\mathsf{I},\mathsf{J},\mathsf{K}) \)
satisfying the quaternionic relations such that the metric is K\"ahler
with respect to each complex structure. Thus a hyperk\"ahler manifold
has a triple (in fact a whole two-sphere) of symplectic forms \(
(\omega_1,\omega_2,\omega_3)\) which are K\"ahler forms for the
complex structures \( (\mathsf{I},\mathsf{J},\mathsf{K}) \).

If a compact group \( K \) acts on a hyperk\"ahler manifold \( M \)
preserving its hyperk\"ahler structure with moment maps \( \mu_1,
\mu_2, \mu_3 \) for the symplectic forms \( \omega_1, \omega_2,
\omega_3\), then the hyperk\"ahler quotient \( \mu^{-1}(0)/K \) (where
\( \mu = (\mu_1,\mu_2, \mu_3) \colon\allowbreak M \to \lie k \otimes
\R^3 \)) inherits a hyperk\"ahler structure from that on \( M \).

A hyperk\"ahler manifold \( M \) has (real) dimension \( 4k \) for
some non-negative integer \( k \). We can associate to \( M \) its
twistor space \( \twist_M \) which is a complex manifold of (complex)
dimension \( 2k+1 \) with some additional structure from which we can
recover the hyperk\"ahler manifold \( M \). As a smooth manifold \(
\twist_M \) is the product \( M \times S^2 \) of \( M \) and the
two-dimensional sphere \( S^2 \), but its complex structure at \(
(m,\zeta) \in M \times S^2 \) is defined by \( ({\sf
I}_\zeta,\mathsf{\tilde I}) \) where \( \mathsf{\tilde I} \) is the
usual complex structure on \( S^2 \cong \PP^1 \) and if
\begin{equation*}
  \zeta=(\zeta_1,\zeta_2,\zeta_3) \in S^2 \subset \R^3
\end{equation*}
then \( \mathsf{I}_\zeta = \zeta_1 \mathsf{I} + \zeta_2 \mathsf{J} +
\zeta_3 \mathsf{K} \), where \( (\mathsf{I} ,\mathsf{J}, \mathsf{K})
\) is the triple of complex structures on \( M \) as above.  The
twistor space \( \twist_M \) is equipped with the following additional
structure \cite{HKLR}:
\begin{asparaenum}
\item a holomorphic projection \( \pi \colon \twist_M \to \PP^1 \)
  whose fibre at \( \zeta \in \PP^1 \) is \( M \) equipped with the
  holomorphic structure \( \mathsf{I}_\zeta \) determined by \( \zeta \);
\item a holomorphic section \( \omega \) of the holomorphic vector
  bundle
  \begin{equation*}
    \wedge^2 T_F^*(2) = \wedge^2 T_F^* \otimes \cO(2)
  \end{equation*}
  over \( \twist_M \) where \( T_F \) is the tangent bundle along the
  fibres of \( \pi \), such that \( \omega \) defines a holomorphic
  symplectic form \( \omega_\zeta \) on each fibre \( \pi^{-1}(\zeta)
  \cong M \) of \( \pi \);
\item a real structure (that is, an anti-holomorphic involution) \(
  \tau \) on \( \twist_M \) preserving this data and covering the
  antipodal map on \( \PP^1 \).
\end{asparaenum}
\medskip With respect to the \( C^\infty \)-identification of \(
\twist_M \) with \( M \times \PP^1 \) the real structure is given by
\begin{equation*}
  \tau(m,\zeta) = (m, -1/\bar{\zeta}).
\end{equation*}
With respect to the fixed holomorphic section \(
(1/2)\partial/\partial \zeta \) of \( T\PP^1 \cong \cO(2) \) the
holomorphic symplectic form \( \omega_\zeta \) is given by
\begin{equation*}
  \omega_\zeta = \omega_2 + \ii \omega_3 - 2 \zeta \omega_1 -
  \zeta^2(\omega_2 - \ii \omega_3), 
\end{equation*}
where \( \omega_1, \omega_2, \omega_3 \) are the K\"ahler forms
associated to the hyperk\"ahler metric and the complex structures \(
\mathsf{I}, \mathsf{J}, \mathsf{K} \). 

A holomorphic section of \( \pi \colon \twist_M \to \PP^1 \) is called
a twistor line, and a twistor line \( \sigma \colon \PP^1 \to \twist_M
\) is real if \( \tau\sigma(\zeta) = \sigma(-1/\bar{\zeta}) \) for
every \( \zeta \in \PP^1 \). Each point \( m \in M \) gives rise to a
real twistor line \( \{m\} \times \PP^1 \) with normal bundle \(
\C^{2k} \otimes \cO(1) \), and using such twistor lines we can recover
the hyperk\"ahler manifold \( M \) from its twistor space.

If a compact group \( K \) acts on a hyperk\"ahler
manifold \( M \) preserving its hyperk\"ahler structure with a
hyperk\"ahler moment map \( \mu= (\mu_1, \mu_2, \mu_3) \), then there
is an associated holomorphic map \( \twist_\mu \colon \twist_M \to
\lie k^*_\C \otimes \cO(2) \) whose restriction to each fibre of \(
\pi \) is a complex moment map for the holomorphic symplectic form
defined by \( \omega \) on the fibre. The twistor space of the
hyperk\"ahler quotient \( \mu^{-1}(0)/K \) is the quotient (in the
sense of K\"ahler geometry or geometric invariant theory
\cite{Kirwan:quotients}) of \( \twist_\mu^{-1}(0) \) by the
complexification \( K_\C \) of \( K \).

\begin{remark}
  \label{hnought}
  The \lq twistor moment map' \( \twist_\mu\colon \twist_M \to \lie
  k^*_\C \otimes \cO(2) \) restricts to a holomorphic section of \(
  \lie k^*_\C \otimes \cO(2) \) on each twistor line \( \{m\} \times
  \PP^1 \) in \( \twist_M \). This gives us a map
  \begin{equation*}
    \phi\colon M \to H^0(\PP^1, \lie k^*_\C \otimes \cO(2)) \cong \lie
    k^*_\C \otimes H^0(\PP^1,\cO(2))
  \end{equation*}
  whose evaluation at any \( p \in \PP^1 \) can be identified (modulo
  choosing a basis for the one-dimensional complex vector space \(
  \cO(2)_p \)) with the complex moment map associated to the
  corresponding complex structure on \( M \). Then \( \phi^{-1}(0) =
  \mu^{-1}(0) \) and the hyperk\"ahler quotient of \( M \) by \( K \)
  is \( \phi^{-1}(0)/K \).
\end{remark}
The twistor space for a flat hyperk\"ahler manifold \( \HH^k \) is the
vector bundle \( \twist = \cO(1) \otimes \C^{2k} \) over \( \PP^1 \).
If we write \( \twist \) as
\begin{equation*}
  \twist = \cO(1) \otimes (W \oplus W^*)
\end{equation*}
where \( W = \C^k \), then the natural pairing between \( W \) and \(
W^* \) defines a constant holomorphic section \( \omega \) of \(
\wedge^2 T_F^*(2) \). The standard hermitian structure on \( W \)
gives us an identification of \( W \) with \( \overline{W}^* \), and
this together with the antipodal map on \( \PP^1 \) gives us the real
structure on \( \twist \). 

Let \( \alpha_1,\ldots,\alpha_k \) be the standard coordinates on \( W
\) and let \( \beta_1, \ldots, \beta_k \) be the dual coordinates on
\( W^* \).  We can cover \( \twist \) with two coordinate patches \(
\zeta \neq \infty \) and \( \zeta \neq 0 \) with coordinates
\begin{equation} \label{labelcoord} (\alpha_1,\ldots,\alpha_k,\beta_1,
  \ldots, \beta_k,\zeta) \quad\text{and}\quad
  (\tilde{\alpha}_1,\ldots,\tilde{\alpha}_k,\tilde{\beta}_1, \ldots,
  \tilde{\beta}_k,\tilde{\zeta}) \end{equation} related by the
transition functions
\begin{equation*}
  \tilde{\zeta} = 1/\zeta, \,\, \tilde{\alpha}_j = \alpha_j/\zeta,
  \,\, \tilde{\beta}_j = \beta_j/\zeta.
\end{equation*}
With respect to these coordinates the real structure on \( \twist \)
is given by
\begin{equation*}
  (\alpha_1,\ldots,\alpha_k,\beta_1, \ldots, \beta_k,\zeta) \mapsto 
  (\bar{\beta}_1/\bar{\zeta},\ldots,\bar{\beta}_k/\bar{\zeta},
  -\bar{\alpha}_1/\bar{\zeta}, \ldots, -
  \bar{\alpha}_k/\bar{\zeta},-1/\bar{\zeta}) 
\end{equation*}
while \( \omega \) is given by \( \sum_{j=1}^k \mathrm{d}\alpha_j
\wedge \mathrm{d}\beta_j/2 \).

\section{The nilpotent cone}
\label{sec:nilpotent-cone}
In this section we recall the hyperk\"ahler structure on the nilpotent
cone in the Lie algebra of \( K_\C=\SL(n, \C) \) obtained in \cite{KS},
and describe its twistor space which can be embedded in the vector
bundle \( \cO(2) \otimes \lie k_\C \) over \( \PP^1 \). 

 The nilpotent
cone for \( K_\C = \SL(n,\C) \) is identified in \cite{KS} with a
hyperk\"ahler quotient \( M\hkq \tilde{H} \), where \( M \) is a flat
hyperk\"ahler space and \( \tilde{H} \) is a product of unitary groups
acting on \( M \). 

 Let us choose integers \( 0 \leqslant n_1
\leqslant n_2 \leqslant \dots \leqslant n_r=n \) and consider the flat
hyperk\"ahler space
\begin{equation}
  \label{eq:Mn}
  M = M(\mathbf n) = \bigoplus_{i=1}^{r-1} \HH^{n_i n_{i+1}} = 
  \bigoplus_{i=1}^{r-1} \Hom(\C^{n_i},\C^{n_{i+1}}) \oplus
  \Hom(\C^{n_{i+1}},\C^{n_i})
\end{equation}
with the hyperk\"ahler action of \( \Un(n_1) \times \dots \times
\Un(n_r) \)
\begin{equation*}
  \alpha_i \mapsto g_{i+1} \alpha_i g_i^{-1},\quad
  \beta_i \mapsto g_i \beta_i g_{i+1}^{-1} \qquad (i=1,\dots r-1),
\end{equation*}
with \( g_i \in \Un(n_i) \) for \( i=1, \dots, r \). Here \( \alpha_i
\) and \( \beta_i \) denote elements of \( \Hom (\C^{n_i},\allowbreak
\C^{n_{i+1}}) \) and \( \Hom (\C^{n_{i+1}},\C^{n_i}) \) respectively,
and right quaternion multiplication is given by
\begin{equation}
  \label{eq:j}
  (\alpha_i,\beta_i)\jj = (-\beta_i^*,\alpha_i^*).
\end{equation}
We may write \( (\alpha,\beta) \in M(\mathbf n) \) as a quiver
diagram:
\begin{equation} \label{eqn2.3} 0
  \stackrel[\beta_0]{\alpha_0}{\rightleftarrows}
  \C^{n_1}\stackrel[\beta_1]{\alpha_1}{\rightleftarrows}
  \C^{n_2}\stackrel[\beta_2]{\alpha_2}{\rightleftarrows}\dots
  \stackrel[\beta_{r-2}]{\alpha_{r-2}}{\rightleftarrows} \C^{n_{r-1}}
  \stackrel[\beta_{r-1}]{\alpha_{r-1}}{\rightleftarrows} \C^{n_r} =
  \C^n,
\end{equation}
where \( \alpha_0 = \beta_0 = 0 \).  For brevity, we will often call
such a diagram a quiver.  Let \( \tH \) be the subgroup, isomorphic to
\( \prod_{i=1}^{r-1} \Un(n_i) \), given by setting \( g_r=1 \) and let
\begin{gather*}
  \tmu \colon M \to \LIE(\tH) \otimes \R^3 =
  \LIE(\tH) \otimes (\R \oplus \C)\\
  \tmu(\alpha,\beta) =
  \bigl((\alpha_i\alpha_i^*-\beta_i^*\beta_i+\beta_{i+1}\beta_{i+1}^*
  - \alpha_{i+1}^*\alpha_{i+1})\ii, \alpha_i\beta_i -
  \beta_{i+1}\alpha_{i+1}\bigr)
\end{gather*}
be the corresponding hyperk\"ahler moment map. 

It is proved in \cite{KS} that when we have a full flag (that is, when
\( r=n \) and \( n_j=j \) for each \( j \), so that the centre of \(
\tH \) can be identified with the maximal torus \( T \) of \( K=\SU(n)
\)) then the hyperk\"ahler quotient \( \tmu^{-1}(0)/\tH \) of \( M \)
by \( \tH \) can be identified with the nilpotent cone in \( \kf_\C
\).

This hyperk\"ahler quotient carries an \( \SU(n) \) action induced
from the action of this group on the top space \( \C^n \) of the
quiver.  In \cite{KS} Theorem 2.1 it is shown that, for any choice of
complex structure, the complex moment map \( M\hkq \tilde{H} \to \lie
k _\C \) for this action induces a bijection from \( M\hkq \tilde{H}
\) onto the nilpotent cone in \( \lie k _\C \). This moment map is
given, with complex structure as above, by
\begin{equation*}
  (\alpha, \beta) \mapsto \alpha_{n-1} \beta_{n-1}.
\end{equation*}
Thus the hyperk\"ahler moment map \( \mu\colon M\hkq \tilde{H} \to
\lie k \otimes \R^3 \) provides a bijection from \( M\hkq \tilde{H} \)
to its image in \( \lie k \otimes \R^3 \), and this image is a \( K
\times \SU(2) \)-invariant subset \( \Nil(K) \) of \( \lie k \otimes
\R^3 \) such that after acting by \emph{any} element of \( \SU(2) \)
the projection \( \Nil(K) \to \lie k _\C \) given by the decomposition
\( \R^3 = \R \oplus \C \) is a bijection onto the nilpotent cone in \(
\lie k _\C \).  

The hyperk\"ahler structure on the nilpotent cone in
\( \lie k _\C \) can in principle be determined explicitly from the
bijection from \( M\hkq \tilde{H} \) given by \( (\alpha, \beta)
\mapsto \alpha_{n-1} \beta_{n-1} \), but it is not very easy to write
down a lift to \( \mu^{-1}(0) \) of the inverse of this bijection.
However, the hyperk\"ahler structure can be determined explicitly from
the embedding of \( \Nil(K) \) in \( \lie k \otimes \R^3 \) as
follows. The complex and complex-symplectic structures on \( \Nil(K)
\) are given by pulling back the standard complex and
complex-symplectic structures on nilpotent orbits in \( \lie k_\C \)
under the projections of \( \lie k \otimes \R^3 \) onto \( \lie k
\otimes \C \) corresponding to the different choices of complex
structures, and these determine the metric.

Thus it is useful to describe as explicitly as possible the embedding
of \( \Nil(K) \) in \( \lie k \otimes \R^3 \). If we fix a complex
structure and use it to identify \( \Nil(K) \) with the nilpotent cone
in \( \lie k _\C \) and to identify \( \lie k \otimes \R^3 \) with \(
\lie k \oplus \lie k_\C \), then the embedding is given by
\begin{equation*}
  \eta \mapsto ( \Phi_n(\eta), \eta)
\end{equation*}
where the map \( \Phi_n \) from the nilpotent cone in \( \sL(n,\C) \)
to \( \su(n) \) is the (real) moment map for the action of \( K=\SU(n)
\) on the nilpotent cone with respect to the K\"ahler form determined
by the standard complex structure and the hyperk\"ahler metric. The
map \( \Phi_n \) is determined inductively by the properties given in
Lemma~\ref{lem2.3} below.

\begin{remark}
  Note that \( M \) has an \( \SU(2) \)-action which commutes with the
  action of \( \tH \) and rotates the complex structures on \( M
  \). This action descends to an \( \SU(2) \)-action on \( M \hkq \tH
  \) and hence on \( \Nil(K) \). The induced \( \SU(2) \)-action on \(
  \Nil(K) \) extends to the action on \( \lie k \otimes \R^3 \) given
  by the usual rotation action on \( \R^3 \). When \( \Nil(K) \) is
  identified with the nilpotent cone in \( \lie k_\C \) by projection
  from \( \lie k \otimes \R^3 \) to \( \lie k_\C \), the action of
  \begin{equation*}
    \begin{pmatrix}
      u & v \\
      -\bar{v} & \bar{u}
    \end{pmatrix} \in \SU(2),
  \end{equation*}
  where \( \abs u^2 + \abs v^2 = 1 \), is given by
  \begin{equation*}
    \eta \mapsto u^2 \eta + 2uv \Phi_n(\eta) - v^2 \bar{\eta}^T.
  \end{equation*}
\end{remark}

\begin{lemma}
  \label{lem2.3}
  The map \( \Phi_n \) from the nilpotent cone in \( \lie k_\C =
  \sL(n,\C) \) to \( \lie k = \su(n) \) is \( \SU(n) \)-equivariant
  for the adjoint action of \( \SU(n) \) on \( \sL(n,\C) \) and \(
  \su(n) \). Furthermore if \( A \) is a generic upper triangular \(
  (n-1)\times(n-1) \) complex matrix with positive real eigenvalues so
  that the \( n\times n \) matrix
  \begin{equation*}
    \begin{pmatrix}
      0 & A^2\\
      0 & 0
    \end{pmatrix}
  \end{equation*}
  is strictly upper triangular, then
  \begin{equation*}
    \Phi_n\left(
      \begin{pmatrix}
        0 & A^2\\
        0 & 0 \end{pmatrix} \right) = \left( 
    \begin{pmatrix}
      AB(\overline{A} \overline{B})^T & 0\\
      0 & 0
    \end{pmatrix} - 
    \begin{pmatrix}
      0 & 0\\
      0 & (\overline{B}^{-1}\overline{A})^T B^{-1}A
    \end{pmatrix} \right) \ii
  \end{equation*}
  where \( B \) is upper triangular with real positive eigenvalues and
  satisfies
  \begin{equation*}
    \Phi_{n-1} \left(
      \begin{pmatrix}
        0 & B^{-1}A
      \end{pmatrix}
      \begin{pmatrix}
        AB\\
        0
      \end{pmatrix}\right)
    = ( B^{-1} A (\overline{B}^{-1} \overline{A})^T -  (\overline{A}
    \overline{B})^T AB) \ii.
  \end{equation*}
  These properties determine the continuous map \( \Phi_n \)
  inductively.
\end{lemma}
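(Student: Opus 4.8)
The plan is to read \( \Phi_n \) off the quiver description of \cite{KS}. Under the identification of the nilpotent cone in \( \sL(n,\C) \) with \( M(\mathbf n)\hkq\tH \) for \( \mathbf n=(1,2,\dots,n) \), a nilpotent \( \eta \) is represented by a point \( (\alpha,\beta)\in\tmu^{-1}(0) \) with \( \alpha_{n-1}\beta_{n-1}=\eta \) in the quiver \eqref{eqn2.3}, and \( \Phi_n(\eta) \) is the value at the top node \( \C^n \) of the moment map for the residual \( \Un(n) \)-action, namely \( \Phi_n(\eta)=(\alpha_{n-1}\alpha_{n-1}^{*}-\beta_{n-1}^{*}\beta_{n-1})\ii \) (which lies in \( \su(n) \) automatically, since the centre of \( \Un(n) \) acts trivially on the nilpotent cone and fixes \( \eta=0 \)). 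Equivariance of \( \Phi_n \) is then immediate: the \( \SU(n) \)-action on \( M(\mathbf n) \) is linear, hence intertwined by the moment map; \( \tmu^{-1}(0) \) is \( \SU(n) \)-invariant because \( \SU(n) \) commutes with \( \tH \); and hyperk\"ahler reduction preserves equivariance.

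For the inductive formula I would fix a generic \( \eta \) of the stated shape (so that \( A^{2} \) is invertible and \( \eta \) is regular) and pick the preferred ``upper-triangular along the quiver'' lift --- the Kempf--Ness representative of the \( \tH_\C \)-orbit, which exists and is unique on the regular locus by an iterated Gram--Schmidt argument. Its top arrows can be taken in the form \( \alpha_{n-1}=\bigl(\begin{smallmatrix}AB\\ 0\end{smallmatrix}\bigr) \), \( \beta_{n-1}=\bigl(\begin{smallmatrix}0 & B^{-1}A\end{smallmatrix}\bigr) \) with \( B \) upper triangular, which already solves \( \alpha_{n-1}\beta_{n-1}=\eta \) for any invertible \( B \). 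The complex moment-map equation at the penultimate node, \( \alpha_{n-2}\beta_{n-2}=\beta_{n-1}\alpha_{n-1}=:\eta_{n-1} \), exhibits the truncated quiver \( (\alpha_1,\dots,\alpha_{n-2},\beta_1,\dots,\beta_{n-2}) \) as a lift of the (again regular) nilpotent \( \eta_{n-1} \) lying in \( \tmu_{(n-1)}^{-1}(0) \), since the earlier components of \( \tmu \) and \( \tmu_{(n-1)} \) coincide; hence \( \Phi_{n-1}(\eta_{n-1})=(\alpha_{n-2}\alpha_{n-2}^{*}-\beta_{n-2}^{*}\beta_{n-2})\ii \). The real moment-map equation at the penultimate node rewrites this in terms of \( \alpha_{n-1}^{*}\alpha_{n-1}=(AB)^{*}(AB) \) and \( \beta_{n-1}\beta_{n-1}^{*}=(B^{-1}A)(B^{-1}A)^{*} \), which is precisely the displayed equation determining \( B \); and substituting the top arrows into \( \Phi_n(\eta)=(\alpha_{n-1}\alpha_{n-1}^{*}-\beta_{n-1}^{*}\beta_{n-1})\ii \) gives the block formula. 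The base case \( n=1 \), where \( \Phi_1\colon\{0\}\to\{0\} \), is trivial.

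That the listed properties determine the continuous map \( \Phi_n \) follows from a density argument. The regular nilpotents \( \bigl(\begin{smallmatrix}0&A^{2}\\ 0&0\end{smallmatrix}\bigr) \) with \( A \) generic upper triangular with positive eigenvalues form a family whose \( \SU(n) \)-sweep is dense in the nilpotent cone --- a dimension count matches this family, fibred over \( \SU(n) \), with the regular orbit. So by continuity it suffices to know \( \Phi_n \) on this sweep, by equivariance to know it on the family, and there the displayed equation --- with \( \Phi_{n-1} \) already pinned down by the inductive hypothesis --- has a unique upper-triangular positive solution \( B \), so the block formula determines \( \Phi_n(\eta) \).

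The main obstacle is the gauge-fixing underpinning the second paragraph: showing that for generic upper-triangular \( A \) with positive eigenvalues there is a \emph{unique} upper-triangular \( B \) with positive eigenvalues for which the arrows above complete to a point of \( \tmu^{-1}(0) \), equivalently that the Kempf--Ness point of the relevant \( \tH_\C \)-orbit is reached by triangular arrows of this precise shape. I expect this to come out of a secondary induction, solving for \( B \) node by node using positivity (a ``Cholesky-type'' argument) together with the inductive control on \( \Phi_{n-1} \). The points requiring care are the asymmetric \( (n-1)+1 \) versus \( 1+(n-1) \) block splittings of \( \C^{n} \), the shift built into \( \beta_{n-1}\alpha_{n-1} \), and keeping the signs in the real moment-map equation at an internal node consistent with \cite{KS}.
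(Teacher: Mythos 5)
Your proposal follows essentially the same route as the paper: \( \Phi_n \) is read off as \( (\alpha_{n-1}\alpha_{n-1}^{*}-\beta_{n-1}^{*}\beta_{n-1})\ii \) from a quiver representative in the zero set of the full hyperk\"ahler moment map, the top arrows are put in the form \( \bigl(\begin{smallmatrix}AB\\ 0\end{smallmatrix}\bigr) \), \( \bigl(\begin{smallmatrix}0 & B^{-1}A\end{smallmatrix}\bigr) \), and the moment-map equations at the penultimate node produce the displayed condition on \( B \) via \( \Phi_{n-1} \). The one point you flag as the main obstacle --- reaching the Kempf--Ness representative by arrows of this triangular shape --- is dispatched in the paper not by a node-by-node Cholesky-type induction but by the soft observation that for generic \( A \) the \( \tilde{H}_\C \)-orbit is closed and \( \tilde{H}_\C \) is the product of a Borel subgroup with its maximal compact \( \tilde{H} \), so a representative in \( \mu_{\tilde{H}}^{-1}(0) \) can already be found in the Borel orbit of the complex solution, which forces exactly the stated form of \( \alpha_{n-1} \) and \( \beta_{n-1} \).
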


\begin{remark}
  In order for the upper triangular matrix \( A \) to be generic in the
  sense of this lemma, it is enough for \( A \) to lie in the regular
  nilpotent orbit in \( \lie k_\C \).
\end{remark}
\begin{proof}
  First note that any nilpotent matrix lies in the \( \SU(n)
  \)-adjoint orbit of a strictly upper triangular matrix with
  non-negative real entries immediately above the leading diagonal,
  and a generic such matrix (lying in the regular nilpotent orbit in
  \( \lie k_\C \)) can be expressed in the form
  \begin{equation*}
    \begin{pmatrix}
      0 & A^2\\
      0 & 0
    \end{pmatrix}
  \end{equation*}
  where \( A \) has positive real eigenvalues.  The hyperk\"ahler
  quotient \( M\hkq \tilde{H} \) can be described as \(
  \mu_{\tilde{H}}^{-1}(0)/\tilde{H} \) where \( \mu_{\tilde{H}} \) is
  the hyperk\"ahler moment map for the action of \( \tilde{H} \) on \(
  M \), and also as the GIT quotient of \(
  (\mu_{\tilde{H}})_\C^{-1}(0) \) by its complexification \(
  \tilde{H}_\C \).  A quiver \( (\alpha,\beta) \in M(\mathbf{n}) \) as
  at \eqref{eqn2.3} lies in \( \mu_{\tilde{H}}^{-1}(0)/\tilde{H} \) if
  and only if \( \beta_j \alpha_j = \alpha_{j-1} \beta_{j-1} \) for \(
  0 < j < n \).  Writing
  \begin{equation*}
    \begin{pmatrix}
      0 & A^2\\
      0 & 0
    \end{pmatrix}
    = \alpha_{n-1} \beta_{n-1}
  \end{equation*}
  where
  \begin{equation*}
    \alpha_{n-1} =
    \begin{pmatrix}
      A\\
      0
    \end{pmatrix}
    \quad\text{and}\quad \beta_{n-1} = 
    \begin{pmatrix}
      0 & A
    \end{pmatrix}
    ,
  \end{equation*}
  so that
  \begin{equation*}
    \beta_{n-1} \alpha_{n-1} = 
    \begin{pmatrix}
      0 & A
    \end{pmatrix}
    \begin{pmatrix}
      A\\
      0
    \end{pmatrix}
  \end{equation*}
  is a strictly upper triangular matrix with non-negative real entries
  immediately above the leading diagonal, allows us inductively to
  find a quiver \( (\alpha, \beta) \) in \(
  (\mu_{\tilde{H}})_\C^{-1}(0) \) such that
  \begin{equation} \label{eqnphi}
    \alpha_{n-1} \beta_{n-1} = 
    \begin{pmatrix}
      0 & A^2\\
      0 & 0
    \end{pmatrix}
  \end{equation}
  for generic \( A \) as above.  In order to find the value of \(
  \Phi_n \) on this matrix, however, we need to find a representative
  quiver in \( \mu_{\tilde{H}}^{-1}(0) \), since
  \begin{equation*}
    \Phi_n\left(
      \begin{pmatrix}
        0 & A^2\\
        0 & 0
      \end{pmatrix}
    \right)
    = ( \alpha_{n-1} \alpha^*_{n-1} -  \beta_{n-1}^* \beta_{n-1})\ii
  \end{equation*}
  for any quiver \( (\alpha,\beta) \in \mu_{\tilde{H}}^{-1}(0) \) such
  that \eqref{eqnphi} holds, and then
  \begin{equation*}
    \Phi_j(\alpha_{j-1} \beta_{j-1} ) = ( \alpha_{j-1} \alpha^*_{j-1} -  
    \beta_{j-1}^* \beta_{j-1} )\ii
  \end{equation*}
  for \( 0<j<n \).  The theory of moment maps and GIT quotients tells
  us that there will exist such a representative quiver in the closure
  of the \( \tilde{H}_\C \)-orbit of the quiver we have already found,
  and that for generic \( A \) this orbit is closed.  Indeed since \(
  \tilde{H}_\C \) can be expressed as the product of a Borel subgroup
  and its maximal compact subgroup \( \tilde{H} \), we can find such a
  representative quiver in the intersection of the Borel orbit and \(
  \mu_{\tilde{H}}^{-1}(0) \). The maps \( \alpha_{n-1} \) and \(
  \beta_{n-1} \) in this quiver will be given by matrices of the form
  \begin{equation*}
    \begin{pmatrix}
      AB\\
      0
    \end{pmatrix}
    \quad\text{and}\quad
    \begin{pmatrix}
      0 & B^{-1}A
    \end{pmatrix}
  \end{equation*}
  where we can assume that \( B \) is an upper triangular \( (n-1)
  \times (n-1) \) complex matrix with positive real eigenvalues.  As
  the quiver lies in \( \mu_{\tilde{H}}^{-1}(0) \) it follows that
  \begin{equation*}
    \Phi_{n-1} \left(
      \begin{pmatrix}
        0 & B^{-1}A
      \end{pmatrix}
      \begin{pmatrix}
        AB\\
        0
      \end{pmatrix}
    \right)
    = ( B^{-1} A (\overline{B}^{-1} \overline{A})^T -  (\overline{A}
    \overline{B})^T AB) \ii. 
  \end{equation*}
  This completes the proof.
\end{proof}
\begin{example}
  Consider the case when \( n=2 \). If \( d  \) is real and positive then
  \begin{equation*}
    \begin{pmatrix}
      0 & d\\
      0 & 0
    \end{pmatrix}
    =
    \begin{pmatrix}
      \sqrt{d}\\
      0
    \end{pmatrix}
    \begin{pmatrix}
      0 & \sqrt{d}
    \end{pmatrix}
    ,
  \end{equation*}
  where
  \begin{equation*}
    \begin{pmatrix}
      0 & \sqrt{d}
    \end{pmatrix}
    \begin{pmatrix}
      \sqrt{d}\\
      0
    \end{pmatrix}
    = 0,
  \end{equation*}
  and \( \Phi_1 = 0 \) so
  \begin{equation*}
    \Phi_2\left(
      \begin{pmatrix}
        0 & d\\
        0 & 0
      \end{pmatrix}
    \right) =  \begin{pmatrix}
      d & 0\\
      0 & -d
    \end{pmatrix} \ii
    .
  \end{equation*}
\end{example}

\begin{example}
  Let \( n=3 \) and let
  \begin{equation*}
    A= \begin{pmatrix}
      a & b\\
      0 & c
    \end{pmatrix}
    \quad\text{and}\quad  B= \begin{pmatrix}
      \alpha & \beta \\
      0 & 1/\gamma
    \end{pmatrix}
  \end{equation*}
  where \( a,c,\alpha,\gamma \) are real and positive and \(
  b,\beta\in \C \).  Then using the previous example we have
  \begin{equation*}
    \Phi_{2} \left( \begin{pmatrix}
        0 & B^{-1}A
      \end{pmatrix}
      \begin{pmatrix}
        AB\\
        0
      \end{pmatrix}
    \right) =
    \Phi_2\left( \begin{pmatrix}
        0 & ac/\alpha\gamma \\
        0 & 0
      \end{pmatrix}
    \right) =  
    \begin{pmatrix}
      ac/\alpha\gamma & 0\\
      0 & -ac/\alpha\gamma
    \end{pmatrix} \ii
  \end{equation*}
  while
  \begin{multline*}
    ( B^{-1} A (\overline{B}^{-1} \overline{A})^T - (\overline{A}
    \overline{B})^T AB) \ii
    \\
    = \begin{pmatrix} a^2(1/\alpha^2 - \alpha^2) + \abs{(b -
      c\beta\gamma)/\alpha}^2 &
      c\gamma(b - c\beta\gamma)/\alpha - a\alpha(a\beta + b/\gamma)\\[1ex]
      c\gamma\overline{(b - c\beta\gamma)}/\alpha -
      a\alpha\overline{(a\beta + b/\gamma)} & c^2(\gamma^2 -
      1/\gamma^2) - \abs{a\beta + b/\gamma}^2
    \end{pmatrix} \ii .
  \end{multline*}
  When these are equal we have
  \begin{equation*}
    \beta = \frac{b(c\gamma^2 - a \alpha^2)}{\gamma(a^2 \alpha^2 + c^2 \gamma^2)}
  \end{equation*}
  and
  \begin{equation*}
    a^2(1/\alpha^2 - \alpha^2) + \abs*{\frac{ab(a+c) \alpha}{a^2 \alpha^2 +
    c^2 \gamma^2}}^2 
    = ac/\alpha\gamma = c^2(1/\gamma^2 - \gamma^2) + \abs*{\frac{bc(a+c) \gamma}{a^2 \alpha^2 + c^2 \gamma^2}}^2.
  \end{equation*}
  It is convenient to write \( \alpha/\gamma = \delta \); then these
  equations are equivalent to
  \begin{equation} \label{bdelta} \abs b = \frac{a^2 \delta^2 +
    c^2}{a+c} \sqrt{\gamma^4 + \frac{a^2/\delta^2 - c^2}{c^2 - a^2
    \delta^2}}
  \end{equation}
  and \( f_{(a/c)}(\delta) = 0 \) where
  \begin{equation*}
    f_{(a/c)}(x) = x^4 - (a/c)x^3 + (c/a) x - 1.
  \end{equation*}
  Note that this polynomial factorises as \( (x - (a/c))(x^3 + (c/a)) \)
  so it has a unique positive root \( \delta = a/c \). Thus
  \begin{equation*}
    \gamma = c \left( \frac{(a+c)\abs b}{a^4 + c^4} 
    \right)^{1/2}
  \end{equation*}
  and \( \alpha = \frac{a \gamma}{c} \) and \( \beta =
  \frac{b(c^3-a^3)}{\gamma (a^4 + c^4)} = \frac{\gamma b (c^3-a^3)}
  {\abs{b} c^2 (a+c)}\).  Then
  \begin{equation*}
    \Phi_3\left( \begin{pmatrix}
        0 & A^2\\
        0 & 0 \end{pmatrix} \right)
  \end{equation*}
  is given by substituting these expressions for \(
  \alpha,\beta,\gamma \) into the matrix
  \begin{equation*}
    \begin{pmatrix}
      a^2 \alpha^2 + \abs*{a\beta + b/\gamma}^2 &
      (a\beta\gamma + b)c/\gamma^2 & 0 \\[1ex]
      (a\bar{\beta}\gamma + \bar{b})c/\gamma^2 & c^2/\gamma^2 -
      a^2/\alpha^2 & - a(b \alpha - \beta \gamma c)/\alpha^2 \\[1ex]
      0 & - a(\bar{b} \alpha - \bar{\beta} \gamma c)/\alpha^2 &
      \abs*{b - \beta \gamma c}^2/\alpha^2 -c^2 \gamma^2
    \end{pmatrix}
    \ii.
  \end{equation*}
  We obtain
  \begin{equation*}
    \begin{pmatrix}
      \abs{b} (a+c) & \frac{bc^2}{\abs{b}} & 0 \\
      \frac{\bar{b} c^2}{\abs{b}} & 0 & -\frac{b a^2}{\abs{b}} \\
      0 & -\frac{\bar{b} a^2}{\abs{b}} & -\abs{b} (a+c)
    \end{pmatrix} \ii
  \end{equation*}
  (cf. \cite[\S5]{KS2}).
\end{example}
Since \( M \) is a flat hyperk\"ahler manifold, it follows as in
\S \ref {sec:twistor-spaces} that its twistor space \( \twist_M \) is the
vector bundle
\begin{equation*}
  \cO(1) \otimes \bigoplus_{i=1}^{r-1} (\Hom(\C^{n_i},\C^{n_{i+1}}) \oplus
  \Hom(\C^{n_{i+1}},\C^{n_i}))
\end{equation*}
over \( \PP^1 \).  The complex moment map for the action of \(
\tilde{H} \) defines a morphism \( \twist_\mu \) from \( \twist_M \)
into the vector bundle \( \cO(2) \otimes \LIE\tilde{H}_\C \) over \(
\PP^1 \), and the twistor space for the hyperk\"ahler quotient \(
M\hkq \tilde{H} \) is the quotient (in the sense of geometric
invariant theory) by the action of the complexification \(
\tilde{H}_\C = \prod_{k=1}^{n-1} \GL(k) \) on the zero section of this
morphism.  The complex moment map for the action of \( K=SU(n) \)
defines a \( \tilde{H}_\C \)-invariant morphism from \( \twist_M \)
into the vector bundle \( \cO(2) \otimes \lie k_\C \) over \( \PP^1
\), and this induces an embedding into \( \cO(2) \otimes \lie k_\C \)
of the GIT quotient which is the twistor space \( \twist_{\Nil(K)} \)
for \( \Nil(K) = M\hkq \tilde{H} \).  It follows from \cite{KS}
Theorem 2.1 that this is an isomorphism of the twistor space with the
closed subvariety of the vector bundle \( \cO(2) \otimes \lie k_\C \)
over \( \PP^1 \) which meets each fibre in the tensor product of the
corresponding fibre of \( \cO(2) \) with the nilpotent cone in \( \lie
k_\C \).

The real structure on the twistor space \( \twist_{\Nil(K)} \) is
represented in local coordinates as at \eqref{labelcoord} by
\begin{equation*}
  (\alpha_{n-1}\beta_{n-1}, \zeta) \mapsto
  (-\overline{\beta}_{n-1}^T\overline{\alpha}_{n-1}^T/\bar{\zeta}^2, -
  1/\bar{\zeta}) 
\end{equation*}
or equivalently
\begin{equation*}
  (X,\zeta) \mapsto (-\bar{X}^T/\bar{\zeta}^2, -1/\bar{\zeta})
\end{equation*}
for \( X \) in the nilpotent cone in \( \lie k_\C \).

The remaining structure required for the twistor space \(
\twist_{\Nil(K)} \) is a holomorphic section \( \omega \) of \(
\wedge^2T_F^* \otimes \cO(2) \) where \( T_F \) denotes the tangent
bundle along the fibres of \( \pi\colon \twist_{\Nil(K)} \to \PP^1 \);
or rather, this is what would be required if \( \Nil(K) \) were
smooth. In fact \( \Nil(K) \) is singular with a stratified
hyperk\"ahler structure, where the strata are given by the (finitely
many) (co)adjoint orbits in the nilpotent cone in \( \lie k_\C \cong
\lie k_\C^* \), and \( \omega \) restricts on each stratum \( \Sigma
\) to a holomorphic section \( \omega_\Sigma \) of \(
\wedge^2T_{F,\Sigma}^* \otimes \cO(2) \) where \( T_{F,\Sigma} \) is
the tangent bundle along the fibres of the restriction of \( \pi \) to
the twistor space \( \twist_\Sigma \).

Recall that any coadjoint orbit \( \cO_\eta \cong K/K_\eta \) for \(
\eta \in \lie k^* \) of a compact group \( K \) has a canonical \( K
\)-invariant symplectic form, the Kirillov-Kostant form \( \omega_\eta
\), which can be obtained by symplectic reduction at \( \eta \) from
the cotangent bundle \( T^*K \) with its canonical symplectic
structure.  The \( K \)-invariant symplectic form \( \omega_\eta \) is
characterised by the property that the corresponding moment map for
the action of \( K \) on \( \cO_\eta \) is the inclusion of \(
\cO_\eta \) in \( \lie k^* \).

Similarly a coadjoint orbit \( \cO_\eta \) for \( \eta \in \lie k^*_\C
\) of the complexification \( K_\C \) of \( K \) has a canonical \(
K_\C \)-invariant holomorphic symplectic form \( \omega_\eta \) which
is again characterised by the property that the associated complex
moment map for the action of \( K_\C \) is the inclusion of \(
\cO_\eta \) in \( \lie k^*_\C \). 

If \( \Sigma \) is a stratum of \( {\Nil(K)} \) given by a coadjoint
orbit in \( \lie k^*_\C \) then the holomorphic section \(
\omega_\Sigma \) of \( \wedge^2T_{F,\Sigma}^* \otimes \cO(2) \) which
restricts to a holomorphic symplectic form on each fibre of \(
\pi\colon \twist_{\Sigma} \to \PP^1 \) is \( K_\C \)-invariant for \(
K=\SU(n) \), and the corresponding twistor moment map \(
\twist_{\Sigma} \to \lie k_\C^* \otimes \cO(2) \) is the restriction
of the embedding of \( \twist_{\Nil(K)} \) into \( \lie k_\C^* \otimes
\cO(2) \). Thus it follows that each \( \omega_\Sigma \) (and hence
also the holomorphic section \( \omega \) of \( \wedge^2T_F^* \otimes
\cO(2) \)) is given by the Kirillov-Kostant construction on the fibres
of \( \pi \).

The Springer resolution of the nilpotent cone in \( \lie k_\C \) is
given by the complex moment map for the action of \( \lie k_\C \) on
the cotangent bundle \( T^*\mathcal{B} \) where \( \mathcal{B} \) is
the flag manifold \( K_\C/B = K/T \) identified with the space of
Borel subgroups of \( K_\C \) \cite{Chriss-G:representation}. The
twistor space \( \twist_{\Nil(K)} \) has a corresponding resolution of
singularities
\begin{equation*}
  \tilde{\twist}_{\Nil(K)} \cong  K_\C \times_B (\lie b^0 \times \cO(2))
  \cong  K \times_T (\lie t^0 \times \cO(2))
\end{equation*}
where \( B \) is a Borel subgroup of \( K_\C \) containing \( T \) and
\( \lie b^0 \) is the annihilator of its Lie algebra in \( \lie k_\C^*
\) while \( \lie t^0 \) is the annihilator of the Lie algebra of \( T
\) in \( \lie k^* \).

\begin{remark}
  \label{nilkhnought}
  The twistor moment map \( \twist_{\Nil(K)} \to \lie k^*_\C \otimes
  \cO(2) \) restricts to a holomorphic section of \( \lie k^*_\C
  \otimes \cO(2) \) on each twistor line \( \{m\} \times \PP^1 \) in
  \( \twist_{\Nil(K)} \), and gives us a map
  \begin{equation*}
    \phi\colon {\Nil(K)} \to H^0(\PP^1, \lie k_\C^* \otimes \cO(2))
  \end{equation*}
  as in Remark~\ref{hnought}. For any \( p \in \PP^1 \) the composition of
  \( \phi \) with the evaluation map
  \begin{equation*}
    H^0(\PP^1, \lie k_\C^* \otimes \cO(2)) \to  \lie k_\C^* \otimes
    \cO(2)_p 
  \end{equation*}
  is injective, and its image is the nilpotent cone in \( \lie k^*_\C
  = \lie k_\C \) if we choose any basis for the one-dimensional
  complex vector space \( \cO(2)_p \) to identify \( \lie k_\C^*
  \otimes \cO(2)_p \) with \( \lie k_\C^* \). If we fix the complex
  structure corresponding to \( [1:0] \in \PP^1 \) to identify \(
  {\Nil(K)} \) with the nilpotent cone in \( \lie k_\C^* \), then \(
  \phi \) is given by
  \begin{equation*}
    \phi(\eta)[u:v] = u^2 \eta + 2uv \Phi_n(\eta) - v^2 \bar{\eta}^T
  \end{equation*}
  when \( \eta \in \lie k_\C \) is nilpotent and \( [u:v] \in \PP^1 \)
  and \( \Phi_n \) is as at Lemma~\ref{lem2.3}.

  The map \( \phi\colon {\Nil(K)} \to H^0(\PP^1, \lie k_\C^* \otimes
  \cO(2)) \) induces an embedding
  \begin{equation*}
    \phi_\twist \colon \twist_{\Nil(K)} = \PP^1 \times {\Nil(K)} \to
    \PP^1 \times H^0(\PP^1, \lie k_\C^* \otimes \cO(2))
  \end{equation*}
  which is {\em not} holomorphic, as well as the holomorphic embedding
  \begin{equation*}
    \twist_{\Nil(K)} \to \lie k_\C^* \otimes \cO(2).
  \end{equation*}
  This map (which may also be viewed as the twistor moment map for the
  \( K_\C \) action), is the composition of \( \phi_\twist \) with the
  natural map
  \begin{equation*}
    \PP^1 \times
    H^0(\PP^1, \lie k_\C^* \otimes \cO(2)) \to \lie k_\C^* \otimes \cO(2).
  \end{equation*}
  The real structure on \( \twist_{\Nil(K)} \) extends to the real
  structure on \( \PP^1 \times H^0(\PP^1, \lie k_\C^* \otimes \cO(2))
  \) determined by the standard real structure \( \zeta \mapsto
  -1/\bar{\zeta} \) on \( \PP^1 \) and the real structure \( \eta
  \mapsto -\bar{\eta}^T \) on \( \lie k_\C \).

  If \( X \) is any hyperk\"ahler manifold on which \( K = \SU(n) \)
  acts with hyperk\"ahler moment map \( \mu_X\colon X \to \lie k^*
  \otimes \R^3 \) and corresponding
  \begin{equation*}
    \phi_X\colon X \to H^0(\PP^1, \lie k_\C^* \otimes \cO(2))
  \end{equation*}
  as in Remark~\ref{hnought}, then we obtain a stratified hyperk\"ahler
  space
  \begin{equation*}
    X_{\mathrm{nil}} = (X \times {\Nil(K)}) \hkq K = \bigsqcup_{\cO}
    \phi_X^{-1}(\phi(\cO))/K
  \end{equation*}
  where the strata are indexed by the nilpotent coadjoint orbits \(
  \cO \) in \( \lie k_\C^* \). Since \( \phi_X \) and \( \phi \) are
  \( K \)-equivariant and \( K_\C = K P_{\cO}\), the stratum indexed
  by \( \cO \) is given by
  \begin{equation*}
    \phi_X^{-1}(\phi(\cO))/K \cong \phi_X^{-1}(\phi(P_{\cO}
    \eta_{\cO})) /K_{\cO},
  \end{equation*}
  where \( \eta_{\cO} \) is a representative of the orbit \( \cO \) in
  Jordan canonical form, while \( P_{\cO} \) is the associated
  Jacobson-Morosov parabolic (see \cite{Collingwood-McGovern} Remark
  3.8.5) and \( K_{\cO} = K \cap P_\cO \).

  It follows from \cite{DKS} Theorem 7.18 that \( \Nil(K) \) with any
  of its complex structures is the non-reductive GIT quotient (in the
  sense of \cite{DK}) of \( K_\C \times \lie b^0 \) by the Borel \( B
  = T_\C N \) in \( K_\C \). Hence if \( X \) is a complex affine
  variety with respect to any of its complex structures, then \(
  X_{\mathrm{nil}} \) is the complex symplectic quotient (with respect
  to non-reductive GIT) of \( X \) by the action of the Borel \( B \).
\end{remark}

\section{Symplectic and hyperk\"ahler implosion}
\label{sec:symplectic-implosion}

In this section we recall the constructions of symplectic implosion
from \cite{GJS} and hyperk\"ahler implosion for \( K=\SU(n) \) from
\cite{DKS,DKS2}.

Let \( M \) be a symplectic manifold with Hamiltonian action of a
compact group \( K \).  The imploded space \( M_\impl \) is a
stratified symplectic space with a Hamiltonian action of the maximal
torus \( T \) of \( K \). It has the property, that, denoting
symplectic reduction at \(\lambda\) by \( \symp^s_\lambda \),
\begin{equation}
  M \symp_\lambda^s K   = M_\impl \symp^s_\lambda T
\end{equation}
for all \( \lambda \) in the closure \( \tf_{+}^* \) of a fixed
positive Weyl chamber in \( \tf^* \).  In particular the implosion \(
(T^*K)_\impl \) of the cotangent bundle \( T^*K \) inherits a
Hamiltonian \( K \times T \)-action from the Hamiltonian \( K \times K
\)-action on \( T^*K \). This example is universal in the sense that
for a general \( M \) we have
\begin{equation*}
  M_\impl = (M \times (T^*K)_\impl) \symp^s_0 K.
\end{equation*}
Concretely, the implosion \( (T^*K)_\impl \) of \( T^*K \) with
respect to the right action is constructed from \( K \times \tf_{+}^*
\) by identifying \( (k_1, \xi) \) with \( (k_2, \xi) \) if \( k_1
k_2^{-1} \) lies in the commutator subgroup of the \( K \)-stabiliser
of \( \xi \). The identifications which occur are therefore controlled
by the face structure of the Weyl chamber. In particular if \( \xi \)
is in the interior of the chamber, its stabiliser is a torus and no
identifications are performed.  An open dense subset of \(
(T^*K)_\impl \), therefore, is the product of \( K \) with the
interior of the Weyl chamber.

As explained in \cite{GJS}, when \( K \) is a connected, simply
connected, semisimple compact Lie group, we may embed the universal
symplectic implosion \( (T^*K)_{\impl} \) in the complex affine space
\( E = \oplus V_{\varpi} \), where \( V_{\varpi} \) is the \( K
\)-module with highest weight \( \varpi \) and we sum over a minimal
generating set \( \Pi \) for the monoid of dominant weights.  Under
this embedding, the implosion is identified with the closure \(
\overline{K_\C v} \), where \( v \) is the sum of the highest weight
vectors \( v_{\varpi} \) of the \( K \)-modules \( V_{\varpi} \), and
as usual \( K_{\C} \) denotes the complexification of \( K \). This
gives an alternative, more algebraic, description of the implosion as
a stratified space.  For the stabiliser of \( v \) is a maximal
unipotent subgroup \( N \) of \( K_{\C} \) (that is, the commutator
subgroup \( [B,B] \) of the corresponding Borel subgroup \( B \)) and
hence we may regard \( K_\C v \) as \( K_{\C}/N \). The
lower-dimensional strata which we obtain by taking the closure are
just the quotients \( K_{\C}/[P,P] \) for standard parabolic subgroups
\( P \) of \( K_{\C} \). These standard parabolics are of course in
bijective correspondence with the faces of the Weyl chamber, and this
algebraic stratification is compatible with the symplectic
stratification described above.

The whole implosion may be identified with the Geometric Invariant
Theory (GIT) quotient of \( K_{\C} \) by the non-reductive group \( N
\):
\begin{equation*}
  K_\C \symp N = \Spec(\cO(K_\C)^N);
\end{equation*}
it is often useful to view this as the canonical affine completion of
the quasi-affine variety \( K_\C / N \) (cf. \cite{DK}).

Recalling the Iwasawa decomposition \( K_{\C} = KAN \), where \(
T_{\C}=TA \), we see that
\begin{equation}
  \label{toricsymplectic}
  \overline{K_\C v} = \overline{KAv} = K (\overline{T_\C v}) ,
\end{equation}
so the implosion is described as the sweep under the compact group \(
K \) of a toric variety \( \overline{T_\C v} \). This toric variety is
associated to the positive Weyl chamber \( \lie t_+ \), and is in fact
the subspace
\begin{equation*}
  E^N = \bigoplus_{\varpi \in \Pi} \C v_\varpi
\end{equation*}
of \( E \) spanned by the highest weight vectors \( v_\varpi \). The
canonical affine completion \( K_\C \symp N \) of \( K_C/N \) has a
resolution of singularities
\begin{equation*}
  \widetilde{K_\C \symp N} = K_\C \times_B E^N \to K_\C \symp N
\end{equation*}
induced by the group action \( K_\C \times E^N \to E \).

As explained in \cite{DKS} one can also construct the symplectic
implosion for \( K=SU(n) \) using \emph{symplectic quivers}. These are
diagrams
\begin{equation}
  \label{eq:symplectic}
  0 = V_0 \stackrel{\alpha_0}{\rightarrow}
  V_1 \stackrel{\alpha_1}{\rightarrow}
  V_2 \stackrel{\alpha_2}{\rightarrow} \dots
  \stackrel{\alpha_{r-2}}{\rightarrow} V_{r-1}
  \stackrel{\alpha_{r-1}}{\rightarrow} V_r = \C^n.
\end{equation}
where \( V_i \) is a vector space of dimension \( n_i \). We realised
the symplectic implosion as the GIT quotient of the space of full flag
quivers (i.e., where \( r=n \) and \( n_i=i \)), by \(
\prod_{i=1}^{r-1} \SL(V_i) \), where this group acts by
\begin{align*}
  \alpha_i &\mapsto g_{i+1} \alpha_i g_i^{-1} \quad (i = 1,\dots, r-2),\\
  \alpha_{r-1} &\mapsto \alpha_{r-1} g_{r-1}^{-1}.
\end{align*}

In \cite{DKS} a hyperk\"ahler analogue of the symplectic implosion was
introduced for the group \( K= SU(n) \). We consider quiver diagrams
of the following form:
\begin{equation*}
  0 = V_0\stackrel[\beta_0]{\alpha_0}{\rightleftarrows}
  V_1\stackrel[\beta_1]{\alpha_1}{\rightleftarrows}
  V_2\stackrel[\beta_2]{\alpha_2}{\rightleftarrows}\dots
  \stackrel[\beta_{r-2}]{\alpha_{r-2}}{\rightleftarrows}V_{r-1}
  \stackrel[\beta_{r-1}]{\alpha_{r-1}}{\rightleftarrows} V_r
  = \C^n 
\end{equation*}
where \( V_i \) is a complex vector space of complex dimension \( n_i
\) and \( \alpha_0 = \beta_0 =0 \). The space \( M \) of quivers for
fixed dimension vector \( (n_1, \dots, n_r) \) is a flat hyperk\"ahler
vector space.  

As discussed earlier, there is a hyperk\"ahler action
of \( \Un(n_1) \times \dots \times \Un(n_r) \) on this space given by
\begin{equation*}
  \alpha_i \mapsto g_{i+1} \alpha_i g_i^{-1},\quad
  \beta_i \mapsto g_i \beta_i g_{i+1}^{-1} \qquad (i=1,\dots r-1),
\end{equation*}
with \( g_i \in \Un(n_i) \) for \( i=1, \dots, r \).  Recall that we
defined \( \tilde{H} \) be the subgroup, isomorphic to \( \Un(n_1)
\times \dots \times \Un(n_{r-1}) \), given by setting \( g_r=1 \). We
also let \(H = SU(n_1) \times \dots \times SU(n_{r-1}) \leqslant
\tilde{H}\).
\begin{definition}
  \label{defQ}
  The \emph{universal hyperk\"ahler implosion for \( \SU(n) \)} is the
  hyperk\"ahler quotient \( Q = M \hkq H \), where \( M, H \) are as
  above with \( r=n \) and \( n_j =j \), for \( j=1, \dots, n \).
\end{definition}

This hyperk\"ahler quotient \( Q \) has a residual action of \(
(S^1)^{n-1} =\tilde{H}/H \) as well as an action of \( \SU(n_r) =
\SU(n) \). As explained in \cite{DKS} we may identify \( (S^1)^{n-1}
\) with the maximal torus \( T \) of \( SU(n) \).  There is also an \(
Sp(1)=\SU(2) \) action which is not hyperk\"ahler but rotates the
complex structures.

Using the standard theory relating symplectic and GIT quotients, we
have a description of \( Q=M \hkq H \), as the quotient (in the GIT
sense) of the subvariety defined by the complex moment map equations
\begin{align}
  \label{eq:mmcomplex}
  \alpha_i \beta_i - \beta_{i+1} \alpha_{i+1} &= \lambda^\C_{i+1} I
  \qquad (0 \leqslant i \leqslant r-2)
\end{align}
where \( \lambda^\C_i \) for \( 1 \leqslant i \leqslant r-1 \) are
complex scalars, by the action of
\begin{gather}
  H_\C = \prod_{i=1}^{r-1}\SL(n_i, \C) \notag \\
  \label{SLaction1}
  \alpha_i \mapsto g_{i+1} \alpha_i g_i^{-1}, \quad \beta_i \mapsto
  g_i \beta_i g_{i+1}^{-1} \qquad (i=1,\dots r-2),\\
  \label{SLaction2}
  \alpha_{r-1} \mapsto \alpha_{r-1} g_{r-1}^{-1}, \quad \beta_{r-1}
  \mapsto g_{r-1} \beta_{r-1},
\end{gather}
where \( g_i \in \SL(n_i, \C) \).

The element \( X = \alpha_{r-1} \beta_{r-1} \in \Hom (\C^n, \C^n) \)
is invariant under the action of \( \prod_{i=1}^{r-1}\GL(n_i,\C) \)
and transforms by conjugation under the residual \(
\SL(n,\C)=\SL(n_r,\C) \) action on \( Q \).  We thus have a \( T_{\C}
\)-invariant and \( \SL(n,\C) \)-equivariant map \( Q \rightarrow
\sL(n,\C) \) given by:
\begin{equation*}
  (\alpha, \beta) \mapsto (X)_0 = X - \frac1n \tr(X) I_n
\end{equation*}
where \( I_n \) is the \( n\times n \)-identity matrix. This is the
complex moment map for the residual \( SU(n) \) action.

It is shown in \cite{DKS} that \( X \) satisfies an equation
\begin{equation*}
  X(X+ \nu_1) \dots (X+ \nu_{n-1})=0
\end{equation*}
where \( \nu_i = \sum_{j=i}^{r-1} \lambda_j^\C \). This generalises
the equation \( X^n =0 \) in the quiver construction of the nilpotent
variety in \cite{KS}, and is a consequence of Lemma 5.9 from
\cite{DKS} which gives information about the eigenvalues of \( X \)
and other endomorphisms derived from final segments of the quiver.
Define
\begin{equation}
  \label{eq:defXk}
  X_k = \alpha_{r-1} \alpha_{r-2} \dots
  \alpha_{r-k} \beta_{r-k} \dots \beta_{r-2} \beta_{r-1} \qquad (1
  \leqslant k \leqslant r-1)
\end{equation}
so that \( X = X_1 \).  It is proved in \cite{DKS} Lemma 5.9 that
for \( (\alpha,\beta) \in \mu_\C^{-1}(0) \),
satisfying~\eqref{eq:mmcomplex}, we have
\begin{equation}
  \label{eq:Xformula}
  X_k X = X_{k+1} - (\lambda_{r-1}^\C + \dots +
  \lambda_{r-k}^\C) X_k. 
\end{equation}
It follows from this by induction on \( j \) that if \( 0 \leq j < k <
r \) then
\begin{equation*}
  X_k = X_{k-j}X^j + \sum_{i=1}^j \sigma_i(\nu_{r-k+1},\ldots,\nu_{r-k+j})X_{k-j}X^{j-i}
\end{equation*}
where \( \sigma_i \) denotes the \( i \)th elementary symmetric
polynomial.  In particular putting \( j=k-1 \) gives us
\begin{equation*}
  X_k = X^k + \sum_{i=1}^{k-1} \sigma_i(\nu_{r-k+1},\ldots,\nu_{r-1})X^{k-i}
  = X \prod_{i=1}^{k-1}(X+ \nu_{r-i})
\end{equation*}
so
\begin{equation*}
  X_k = \prod_{i=0}^{k-1}(X+ \sum_{j=1}^i \lambda^{\C}_{r-j}).
\end{equation*}
Thus we have
\begin{lemma}
  \label{DKSlem5.9} If \( 1 \leqslant k \leqslant r-1 \) then
  \begin{equation*}
    \wedge^{r-k}(\alpha_{r-1}\alpha_{r-2} \cdots \alpha_{r-k})
    \wedge^{r-k}(\beta_{r-k} \cdots \beta_{r-1}) =
    \wedge^{r-k}  \prod_{i=0}^{k-1}(X+ \sum_{j=1}^i \lambda^{\C}_{r-j}).
  \end{equation*}
\end{lemma}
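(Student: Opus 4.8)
The plan is to read the lemma off directly from the two descriptions of the endomorphism \( X_k \) of \( \C^n = V_r \) that are already in hand. On the one hand \eqref{eq:defXk} expresses \( X_k \) as the composite \( \alpha_{r-1}\alpha_{r-2}\cdots\alpha_{r-k}\,\beta_{r-k}\cdots\beta_{r-2}\beta_{r-1} \); on the other hand the computation just completed, from \eqref{eq:Xformula} by induction on \( j \), yields the product formula
\begin{equation*}
  X_k = \prod_{i=0}^{k-1}\Bigl(X + \sum_{j=1}^i \lambda^{\C}_{r-j}\Bigr).
\end{equation*}

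First I would note that the composite in \eqref{eq:defXk} factors through the intermediate space \( V_{r-k} \), so that \( X_k = g\circ f \) with \( f = \beta_{r-k}\cdots\beta_{r-1}\colon \C^n\to V_{r-k} \) and \( g = \alpha_{r-1}\cdots\alpha_{r-k}\colon V_{r-k}\to\C^n \). Applying the \( (r-k) \)-th exterior power functor, which is multiplicative, \( \wedge^{r-k}(g\circ f) = (\wedge^{r-k} g)\circ(\wedge^{r-k} f) \), gives
\begin{equation*}
  \wedge^{r-k}(\alpha_{r-1}\cdots\alpha_{r-k})\,\wedge^{r-k}(\beta_{r-k}\cdots\beta_{r-1}) = \wedge^{r-k}(X_k),
\end{equation*}
and applying \( \wedge^{r-k} \) to the product formula gives \( \wedge^{r-k}(X_k) = \wedge^{r-k}\prod_{i=0}^{k-1}(X + \sum_{j=1}^i \lambda^{\C}_{r-j}) \). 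Chaining the two identities is precisely the assertion of the lemma.

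I do not expect any real obstacle here: the substance has already been expended in deriving \eqref{eq:Xformula} from the complex moment map equations \eqref{eq:mmcomplex} and in the induction producing the product formula (and one could in any case simply invoke \cite{DKS} Lemma 5.9). The single point I would make explicit in the write-up is dimensional. Since the image of \( \alpha_{r-1}\cdots\alpha_{r-k} \) has dimension at most \( \dim V_{r-k} = n_{r-k} \), in the relevant full-flag case (\( r=n \), \( n_j=j \)) the space \( \wedge^{r-k}V_{r-k} \) is one-dimensional and both sides of the identity are rank-one operators on \( \wedge^{r-k}\C^n \); when \( n_{r-k} < r-k \) we have \( \wedge^{r-k}V_{r-k} = 0 \) and both sides vanish, so the statement holds trivially. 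Throughout, these are identities of linear maps attached to a quiver \( (\alpha,\beta) \in \mu_\C^{-1}(0) \).
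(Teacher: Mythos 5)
Your argument is correct and coincides with the paper's own derivation: the lemma is obtained there exactly by combining the definition \eqref{eq:defXk} of \( X_k \) with the product formula \( X_k = \prod_{i=0}^{k-1}(X+ \sum_{j=1}^i \lambda^{\C}_{r-j}) \) deduced inductively from \eqref{eq:Xformula}, the only unstated step being the functoriality of \( \wedge^{r-k} \) applied to the factorisation of \( X_k \) through \( V_{r-k} \), which you make explicit. Your added dimensional remark (that in the full-flag case \( \wedge^{r-k}V_{r-k} \) is one-dimensional, so both sides are rank-one operators) is a harmless and correct clarification.
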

Recall from \eqref{toricsymplectic} that the universal symplectic
implosion is the \( K_\C \)-sweep of a toric variety \( \overline{T_\C
v} \). In \cite{DKS2} we found a hypertoric variety mapping
generically injectively to the hyperk\"ahler implosion. Hypertoric 
varieties are hyperk\"ahler quotients of flat quaternionic spaces
\( \mathbb{H}^d \) by subtori of \( (S^1)^d \); for more background see 
\cite{BD,HS}.

\begin{definition}
  \label{defn3.10}
  Let \( M_T \) be the subset of \( M \) consisting of hyperk\"ahler
  quivers of the form:
  \begin{equation*}
    \alpha_k = \begin{pmatrix}
      0 & \cdots &  0 & 0 \\
      \nu_1^k & 0  & \cdots & 0\\
      0 & \nu_2^k  & \cdots & 0\\
      & & \cdots  & \\
      0 & \cdots  & 0 & \nu_k^k
    \end{pmatrix}
  \end{equation*}
  and
  \begin{equation*}
    \beta_k = \begin{pmatrix}
      0 & \mu_1^k & 0  &  \cdots & 0\\
      0 & 0 & \mu_2^k  &  \cdots & 0\\
      & & \cdots &  &  \\
      0 & 0 & \cdots & 0  & \mu_k^k   \end{pmatrix}
  \end{equation*}
  for some \( \nu^k_i, \mu_i^k \in \C \). Note that this definition of
  \( M_T \) differs slightly from the definition in \cite{DKS2} but
  only by the action of an element of the Weyl group of \( H \times K
  \).
\end{definition}
As explained in \cite{DKS2}, for quivers of this form the moment map
equations for the action of \( H \) reduce to the moment map equations
for the action of its maximal torus \( T_{H} \) which is the product
over all \( k \) from 1 to \( n-1 \) of the standard maximal tori in
\( SU(k) \). We say that the quiver is {\em hyperk\"ahler stable} if
it has all \( \alpha_i \) injective and all \( \beta_i \) surjective
after a suitable rotation of complex structures. For quivers of the
form above, this means that \( \mu_i^k \) and \( \nu_i^k \) do not
both vanish for any \( (i,k) \) with \( 1 \leq i \leq k < n \). Two
hyperk\"ahler stable quivers of this form satisfying the hyperk\"ahler
moment map equations lie in the same orbit for the action of \( H \)
if and only if they lie in the same orbit for the action of its
maximal torus \( T_{H} \). We therefore get a natural map \( \iota \)
from the hypertoric variety \( M_T \hkq T_H \) to the implosion \( Q =
M\hkq H \), which restricts to an embedding
\begin{equation*}
  \iota\colon Q^{\hks}_T \to Q
\end{equation*}
where \( Q^{\hks}_T = M_T^{\hks} \hkq T_H \) and \( M_T^{\hks} \)
denotes the hyperk\"ahler stable elements of \( M_T \). Let \( Q_T =
\iota(M_T \hkq T_H) \) be the image of \( \iota\colon M_T \hkq T_H \to
Q \).

The space \( M_T \) is hypertoric for the maximal torus \( T_{\tH} \)
of \( \tH \), and \( M_T \hkq T_H \) is hypertoric for the torus \(
T_{\tH} /T_H = \tH/H = (S^1)^{n-1} \) which can be identified with \(
T \) as in \cite[\S3]{DKS2}, in such a way that the induced action of
\( K \times T \) on \( Q \) restricts to an action of \( T \times T \)
on \( Q_T \) such that \( (t,1) \) and \( (1,t) \) act in the same way
on \( Q_T \) for any \( t \in T \).

Indeed, by \cite{DKS2} Remark 3.2, \( M_T\hkq T_H \) is the hypertoric
variety for \( T \) associated to the hyperplane arrangement in its
Lie algebra \( \lie t \) given by the root planes.
\begin{remark}
  \label{hypertoricT}
  The root planes in the Lie algebra of the maximal torus \(
  T_{\Un(n)} \) of \( \Un(n) \) are the coordinate hyperplanes in \(
  \mathrm{Lie}(T_{\Un(n)}) = \R^n \), and the corresponding hypertoric
  variety for \( T_{\Un(n)} \) is \( \HH^n \). Thus we can identify \(
  M_T \hkq T_H \) with the hypertoric variety
  \begin{equation*}
    \{(w_1, \ldots,w_n) \in \HH^n : w_1 + \cdots + w_n = 0 \}
  \end{equation*}
  for \( T = (S^1)^{n-1} \).
\end{remark}

\section{Towards an embedding of the universal hyperk\"ahler implosion
in a linear representation of \( K\times T \)}
\label{sec:towards-an-embedding}

Let $\ell_j = j(n-j)$. In this section we define a \( K \times T \times \SU(2) \)-equivariant
map \( \sigma \) from the universal hyperk\"ahler implosion \( Q \)
for \( K=\SU(n) \) to
\begin{equation*} \mathcal{R} = 
  H^0(\PP^1,
  (\cO(2) \otimes (\lie k_\C \oplus \lie t_\C)) \oplus 
  \bigoplus_{j=1}^{n-1}  \cO(\ell_j) \otimes \wedge^j\C^n)
\end{equation*}
and an associated holomorphic map \( \tilde{\sigma} \) from the
twistor space of \( Q \) to the vector bundle \( (\cO(2) \otimes (\lie
k_\C \oplus \lie t_\C)) \oplus \bigoplus_{j=1}^{n-1} \cO(\ell_j) \otimes
\wedge^j\C^n \) over \( \PP^1 \); the first of these maps is proved to
be injective and the second is proved to be generically injective
in~\S \ref {sec:embeddings}.

As in \S \ref {sec:symplectic-implosion}, the universal symplectic
implosion \( (T^*K)_{\impl} \) for \( K=\SU(n) \) has a canonical
embedding in a linear representation of \( K\times T \) associated to
its description as the non-reductive GIT quotient
\begin{equation*}
  K_\C \symp N = \Spec(\cO(K_\C)^N)
\end{equation*}
where \( N \) is a maximal unipotent subgroup of the complexification
\( K_\C = \SL(n,\C) \) of \( K \) (cf. \cite{GJS}). The highest
weights of the irreducible representations \( \C^n, \wedge^2 \C^n,
\ldots, \wedge^{n-1}\C^n \) of \( K \) generate the monoid of dominant
weights, and each \( \wedge^j\C^n \) becomes a representation of \( K
\times T \) when \( T \) acts as multiplication by the inverse of the
corresponding highest weight.  Then \( K_\C \symp N \) is embedded in
the representation
\begin{equation*}
  \C^n \oplus \wedge^2 \C^n \oplus \cdots \oplus \wedge^{n-1}\C^n
\end{equation*}
of \( K \times T \) as the closure of the \( K_\C \)-orbit of
\begin{equation*}
  \sum_{j=1}^{n-1} v_j
\end{equation*}
where \( v_j \in \wedge^j\C^n \) is a highest weight vector, fixed by
\( N \).

Similarly we expect the universal hyperk\"ahler implosion for \(
K=\SU(n) \) to have an embedding in a representation of \( K\times T
\). In this section we will define a map which will later be shown to
provide such an embedding. 

Let
\begin{equation}
  \label{quivernormal}
  0 \stackrel[\beta_0]{\alpha_0}{\rightleftarrows}
  \C\stackrel[\beta_1]{\alpha_1}{\rightleftarrows}
  \C^{2}\stackrel[\beta_2]{\alpha_2}{\rightleftarrows}\dots
  \stackrel[\beta_{n-2}]{\alpha_{n-2}}{\rightleftarrows}
  \C^{n-1}
  \stackrel[\beta_{n-1}]{\alpha_{n-1}}{\rightleftarrows}
  \C^n
\end{equation}
be a quiver in \( M \) which satisfies the hyperk\"ahler moment map
equations for
\begin{equation*}
  H = \prod_{k=1}^{n-1} \SU(k).
\end{equation*}
Recalling that \( \alpha_0=\beta_0=0 \), these equations are given by
\begin{equation}
  \alpha_i \beta_i - \beta_{i+1}
  \alpha_{i+1} = \lambda^\C_{i+1} I \qquad (0 \leqslant i \leqslant
  n-2)
\end{equation}
where \( \lambda^\C_i \in \C\) for \( 1 \leqslant i \leqslant n-1 \),
and
\begin{equation}
  \alpha_i \alpha_i^* - \beta_i^* \beta_i +
  \beta_{i+1} \beta_{i+1}^* - \alpha_{i+1}^* \alpha_{i+1} =
  \lambda^\R_{i+1} I \quad (0 \leqslant i \leqslant n-2), 
\end{equation}
where \( \lambda^\R_i \in \R\) for \( 1 \leqslant i \leqslant n-1 \).
Then
\begin{equation*}
  (\alpha_{n-1} \beta_{n-1})_0 = \alpha_{n-1} \beta_{n-1} - \frac1n
  \tr(\alpha_{n-1} \beta_{n-1})I_n \in \lie k_\C
\end{equation*}
is invariant under the action of \( H \), as is
\begin{equation*}
  \bigl((u\alpha_{n-1} + v \beta_{n-1}^*)(-v\alpha_{n-1}^* + u
  \beta_{n-1})\bigr)_0
\end{equation*}
for any \( (u,v) \in \C^2 \) representing an element of
\begin{equation*}
  \SU(2) = \left\{
    \begin{pmatrix}
      u & v \\ -\bar{v} & \bar{u}
    \end{pmatrix}
    : \abs u^2 + \abs v^2 = 1 \right\},
\end{equation*}
where \( \alpha_{n-1}^* \) and \( \beta_{n-1}^* \) denote the adjoints
of \( \alpha_{n-1} \) and \( \beta_{n-1} \).  The same is true of
\begin{equation*}
  \sigma_j^{(\alpha,\beta)} = \wedge^j( \alpha_{n-1} \alpha_{n-2} \cdots \alpha_j)
  \in \wedge^j \C^n
\end{equation*}
where the linear map \( \wedge^j( \alpha_{n-1} \alpha_{n-2} \cdots
\alpha_j)\colon \wedge^j\C^j \to \wedge^j \C^n \) is identified with
the image of the standard basis element of the one-dimensional complex
vector space \( \wedge^j\C^j \), and the element
\begin{equation*}
  \sigma_j^{(u\alpha + v\beta^*,-{v}\alpha^* + {u}\beta)} = \wedge^j(
  u\alpha_{n-1}+ v \beta_{n-1}^*)(u \alpha_{n-2}+ v \beta_{n-2}^*)
  \cdots (u\alpha_j + v\beta_j^*)
\end{equation*}
of \( \wedge^j \C^n \) is also \( H \)-invariant for any \( (u,v) \in
\C^2 \). Note that
\begin{equation*}
  \PP^1 = \SU(2)/S^1
\end{equation*}
where
\begin{equation*}
  S^1 = \left\{ \begin{pmatrix}
      t & 0 \\ 0 & \bar{t} \end{pmatrix}: \abs t = 1 \right\}
\end{equation*}
acts on \( \SU(2) \) by left multiplication, and that
\begin{equation} \label{eqn((1))} (u,v) \mapsto \bigl((u\alpha_{n-1} +
  v \beta_{n-1}^*)(-v\alpha_{n-1}^* + u \beta_{n-1})\bigr)_0
\end{equation}
defines an element of
\begin{equation*}
  H^0(\PP^1,\cO(2) \otimes \lie k_\C)
\end{equation*}
whose value on a point \( p \) of \( \PP^1 \) is the value at the
quiver \eqref{quivernormal} of the corresponding complex moment map
for the \( K \)-action on \( M \) (up to multiplication by a non-zero
complex scalar depending on a choice of basis of the fibre \( \cO(2)_p
\) of the line bundle \( \cO(2) \) at \( p \)). Similarly the map
\begin{equation} \label{eqn((2))} (u,v) \,\,\, \mapsto \,\,\, {u}^2
  \lambda^\C +  {u} v \lambda^\R - v^2
  \bar{\lambda}^\C \end{equation} defines an element of
\begin{equation*}
  H^0(\PP^1,\cO(2)) \otimes \lie t_\C
\end{equation*}
whose value on a point \( p \) of \( \PP^1 \) is the value at the
quiver \eqref{quivernormal} of the corresponding complex moment map
for the \( T \)-action on \( M \) (up to multiplication by a non-zero
complex scalar depending on a choice of basis of the fibre \( \cO(2)_p
\)). Again this map is defined in an \( H \)-invariant way.  Finally
if \( 1 \leqslant j \leqslant n-1 \) then
\begin{equation} \label{eqn((3))}
  \begin{multlined}
    (u,v) \mapsto \sigma_j^{(u\alpha + v\beta^*,-{v}\alpha^* +
    {u}\beta)} \\= \wedge^j( u\alpha_{n-1}+ v \beta^*_{n-1})(u
    \alpha_{n-2}+ v \beta_{n-2}^*) \cdots (u\alpha_j + v\beta_j^*) \in
    \wedge^j \C^n
  \end{multlined}
\end{equation}
defines an element of
\begin{equation*}
  H^0(\PP^1,\cO(\ell_j) \otimes \wedge^j\C^n).
\end{equation*}

\begin{definition}
  \label{defnsigma}
  Let $\ell_j = j(n-j)$ as before and let 
  \begin{equation*}
    \sigma \colon Q \to
    \mathcal{R} =  H^0(\PP^1,
    \cO(2) \otimes (\lie k_\C \oplus  \lie t_\C) \oplus 
    \bigoplus_{j=1}^{n-1}  \cO(\ell_j) \otimes \wedge^j\C^n)
  \end{equation*}
  be the map defined by combining \eqref{eqn((1))}, \eqref{eqn((2))}
  and \eqref{eqn((3))} above.
\end{definition}

\begin{remark}
  The projection of \( \sigma \) to \( H^0(\PP^1, \cO(2) \otimes (\lie
  k_\C \oplus \lie t_\C) ) \) is the map \( \phi \)
  associated as in Remark~\ref{hnought} to the action of \( K \times T \) on
  \( Q \).
\end{remark}
\begin{remark}
  \label{remsigmatilde}
  Note that \( \sigma \) is \( K \times T \times \SU(2) \)-equivariant
  when \( K \times T \times \SU(2) \) acts on \( \mathcal{R} \) as
  described in the introduction, and \( \SU(2) \) acts on \( Q \)
  (commuting with the actions of \( K \) and \( T \)) via the inclusion \(
  \SU(2) = \mathrm{Sp}(1) \leqslant \HH^* \).  Moreover the evaluation
  \begin{equation*}
    \sigma_p\colon Q \to \cO(2)_p \otimes (\lie k_\C \oplus
    \lie t_\C) \oplus 
    \bigoplus_{j=1}^{n-1}  \cO(\ell_j)_p \otimes \wedge^j\C^n
    \cong  \lie k_\C \oplus  \lie t_\C \oplus 
    \bigoplus_{j=1}^{n-1}  \wedge^j\C^n
  \end{equation*}
  of \( \sigma \) at any point \( p \) of \( \PP^1 \) is a morphism of
  complex affine varieties with respect to the complex structure on \(
  Q \) determined by that point of \( \PP^1 \). Furthermore the
  projection of \( \sigma_p \) to \( \lie k_\C \) and to \( \lie t_\C
  \) can be identified with the complex moment map for the complex
  structure associated to \( p \) for the action of \( K \) and of \(
  T \) on \( Q \), once we have fixed a basis element for the fibre \(
  \cO(2)_p \) of the line bundle \( \cO(2) \) at \( p \). Of course a
  choice of basis elements for the fibres \( \cO(\ell_j)_p \) of the line
  bundles \( \cO(\ell_j) \) at \( p \) for all \( j \geqslant 1 \) is
  determined canonically by a choice of basis element for the fibre \(
  \cO(1)_p \) of the line bundle \( \cO(1) \) at \( p \), and so \(
  \sigma_p \) determines a map \( Q \to \lie k_\C \oplus \lie t_\C
  \oplus \bigoplus_{j=1}^{n-1} \wedge^j\C^n \) canonically up to the
  action of \( \C^* \) with weight 2 on \( \lie k_\C \oplus \lie t_\C
  \) and weight \( \ell_j \) on \( \wedge^j\C^n \).
\end{remark}
\begin{remark}
  \label{kcbyn}
  Note that if \( u\alpha_j + v\beta_j^* \) is injective for each \( j
  \) then the projection of \( \sigma_{[u:v]} \) onto \(
  E=\bigoplus_{j=1}^{n-1} \wedge^j\C^n \) maps the quiver into the \(
  K_\C \)-orbit of the sum
  \begin{equation*}
    \sum_{j=1}^{n-1} v_j \in E^N \subseteq E
  \end{equation*}
  of highest weight vectors \( v_j \) in the fundamental
  representations \( \wedge^j\C^n \) of \( K=\SU(n) \) for \( j=1,
  \ldots, n-1 \). Since this condition is satisfied by generic quivers
  in \( Q \), it follows that the projection of \( \sigma_{[u:v]} \)
  onto \( E=\bigoplus_{j=1}^{n-1} \wedge^j\C^n \) maps \( Q \) into
  the canonical affine completion
  \begin{equation*}
    K_\C
    \symp N = \overline{K_\C \left(\sum_{j=1}^{n-1} v_j \right)}.
  \end{equation*}
  Indeed, recall from \cite{GJS} that \( K_\C \symp N \) is the union
  of finitely many \( K_\C \)-orbits, one for each face \( \tau \) of
  the positive Weyl chamber \( \lie t_+ \) for \( K \), with
  stabiliser the commutator subgroup \( [P_\tau,P_\tau] \) of the
  corresponding parabolic subgroup \( P_\tau \) of \( K_\C \) whose
  intersection with \( K \) is the stabiliser \( K_\tau \) of \( \tau
  \) under the (co-)adjoint action of \( K \).  It follows from
  Theorem 6.13 of \cite{DKS} that if \( q \in Q \) then for generic \(
  p \in \PP^1 \) the projection of \( \sigma_p \) onto \(
  E=\bigoplus_{j=1}^{n-1} \wedge^j\C^n \) lies in the \( K_\C \)-orbit
  in \( K_\C\symp N \) with stabiliser \( [P_\tau,P_\tau] \) where \(
  \tau \) is the face of \( \lie t_+ \) whose stabiliser \( K_\tau \)
  in \( K \) is the stabiliser \( K_\lambda \) of the image \( \lambda
  \in \lie k \otimes \R^3 \) of \( q \) under the hyperk\"ahler moment
  map for the action of \( T \) on \( Q \).
\end{remark}

We will prove in \S \ref {sec:embeddings}

\begin{theorem}
  \label{thmsigma}
  The map \( \sigma \colon Q \to \mathcal{R} \) defined at
  Definition~\ref{defnsigma} is injective.
\end{theorem}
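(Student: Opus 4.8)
The plan is to recover a point of \( Q \) from the value of \( \sigma \) in three stages, using the stratification of \( Q \) recalled in \S\ref{sec:strat-univ-hyperk} together with Remark~\ref{kcbyn}. Suppose \( q, q' \in Q \) satisfy \( \sigma(q) = \sigma(q') \); we must deduce \( q = q' \). First read off the torus level: the component \eqref{eqn((2))} of \( \sigma(q) \) is the element \( (u,v) \mapsto u^2 \lambda^\C + uv \lambda^\R - v^2 \bar\lambda^\C \) of \( H^0(\PP^1,\cO(2)) \otimes \lie t_\C \), so its coefficients recover \( \lambda^\C \in \lie t_\C \) and \( \lambda^\R \in \lie t \). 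Equivalently this component is the map \( \phi \) of Remark~\ref{hnought} for the residual \( T \)-action, so \( q \) and \( q' \) have the same image \( \lambda \in \lie t \otimes \R^3 \) under the hyperk\"ahler moment map \( \mu_T \). The coincidence-and-vanishing pattern of the \( \lambda_i \in \R^3 \) — equivalently of the \( \nu_i = \sum_{j \geqslant i} \lambda^\C_j \) together with the real parts — determines the Levi subgroup labelling the strata in \S\ref{sec:strat-univ-hyperk}, so \( q \) and \( q' \) lie in strata \( Q_{[\sim,\cO]} \), \( Q_{[\sim,\cO']} \) with the \emph{same} \( [\sim] \); in particular the parabolic \( P_\tau \leqslant K_\C \) attached to \( \lambda \) as in Remark~\ref{kcbyn} is the same for both. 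It remains to recover \( \cO \) and the point within its stratum.

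\textbf{Stage 2: the \( K_\C \)-coset.} By Remark~\ref{kcbyn} there is an open dense subset \( V_q \subseteq \PP^1 \) such that for \( p \in V_q \) the projection of \( \sigma_p(q) \) to \( E = \bigoplus_{j=1}^{n-1} \wedge^j \C^n \) is a well-defined point of the single \( K_\C \)-orbit \( K_\C / [P_\tau, P_\tau] \subseteq K_\C \symp N \), say \( g^{(q)}_p [P_\tau, P_\tau] \) with \( g^{(q)}_p \in K_\C \) determined modulo \( [P_\tau, P_\tau] \); likewise \( V_{q'} \). Choose \( p \in V_q \cap V_{q'} \). Replacing \( (q, q') \) by \( (hq, hq') \) for the \( h \in \SU(2) \) carrying \( p \) to \( [1:0] \) — legitimate because \( \sigma \) is \( \SU(2) \)-equivariant (Remark~\ref{remsigmatilde}) and the induced \( \SU(2) \)-scaling of \( E \) is realised inside \( T_\C \subseteq K_\C \), hence preserves the orbit structure — we may assume we work in the complex structure \( \mathsf I \), in which \( \sigma_{[1:0]} \) is a morphism of affine varieties. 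From \( \sigma(q) = \sigma(q') \) we obtain \( g^{(q)}_{[1:0]} \equiv g^{(q')}_{[1:0]} =: g \) modulo \( [P_\tau, P_\tau] \).

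\textbf{Stage 3: the nilpotent datum.} On \( Q_{[\sim,\cO]} \), the description recalled in \S\ref{sec:strat-univ-hyperk} (following \cite{DKS}) realises the stratum as an open subset of a complex-symplectic quotient of \( T^*K_\C = T^*\SL(n,\C) \) by an extension of a torus by \( [P_\tau, P_\tau] \), so that a point is a triple \( (\lambda,\ g[P_\tau, P_\tau],\ \xi) \) with \( \xi \) a nilpotent element of the appropriate annihilator, taken modulo the residual torus and \( [P_\tau, P_\tau] \), and with \( \cO \) the Levi-nilpotent orbit of \( \xi \). The \( \lie k_\C \)-component \eqref{eqn((1))} of \( \sigma_{[1:0]} \) is the complex moment map for \( K \), which in this model is \( \mathrm{Ad}_g \xi \). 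Knowing \( \mathrm{Ad}_g \xi \) together with \( g \) modulo \( [P_\tau, P_\tau] \) recovers \( \xi \) modulo exactly the ambiguity in the quotient, hence the whole triple \( (\lambda, g[P_\tau,P_\tau], \xi) \), hence the point of \( Q \); in particular \( \cO = \cO' \) and \( q = q' \). One may instead organise this stage as an induction on \( n \), using that the quiver \eqref{quivernormal} contains an \( \SU(n-1) \)-subquiver on \( 0 \rightleftarrows \C \rightleftarrows \cdots \rightleftarrows \C^{n-1} \) whose contribution to \( \sigma \) is, after the evident identifications, the \( \SU(n-1) \)-version of the map, so that only the top arrow \( \C^{n-1} \rightleftarrows \C^n \) and the exterior power \( j = n-1 \) need fresh treatment.

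\textbf{Main obstacle.} The real work is Stage~3: making precise how the fundamental-representation components \eqref{eqn((3))} and the \( \lie k_\C \)-moment map \eqref{eqn((1))} jointly reconstruct \( \xi \) on the \emph{singular} strata, where in a representative quiver the maps \( \alpha_j, \beta_j \) need not have maximal rank, where \( g \) is known only modulo \( [P_\tau, P_\tau] \) (not modulo \( P_\tau \)), and where the identification of \( Q_{[\sim,\cO]} \) with a quotient of \( T^*\SL(n,\C) \) is only an isomorphism onto an open subset. One must also verify that the generic loci \( V_q \subseteq \PP^1 \) can be chosen so that the \( E \)-part lands cleanly in \( K_\C / [P_\tau, P_\tau] \) for \( q \) and \( q' \) simultaneously, and that the canonical normalisations of the fibres \( \cO(2)_p \) and \( \cO(\ell_j)_p \) used in \S\ref{sec:towards-an-embedding} are compatible. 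Stages~1 and 2 are essentially bookkeeping once these identifications are in hand.
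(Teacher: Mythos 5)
Your skeleton does track the paper's proof at the top level: the paper likewise observes (Remark~\ref{detstratum}) that \( \sigma(\q) \) determines the stratum \( Q_{[\sim,\cO]} \), reduces by \( \SU(2) \)-equivariance to the single evaluation \( \sigma_{[1:0]} \) on \( Q^\circ_{[\sim,\cO]} \), and then uses \( Q^\circ_{[\sim,\cO]} \cong K_\C \times_{R_{[\sim,\cO]}} Q^{\circ,JCF}_{[\sim,\cO]} \) (Theorem~\ref{thm6.8}) to split the problem into controlling a \( K_\C \)-ambiguity and proving injectivity on a slice. But your Stage~3, where you concede the real work lies, rests on a model of the strata that is not correct, so the sketch would not close the gap. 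By Theorem~\ref{thm6.8} the nonempty fibres of the complex moment map for \( K\times T \) on \( Q^\circ_{[\sim,\cO]} \) are positive-dimensional torus orbits, so a point of the stratum is \emph{not} determined by the data \( (\lambda,\ g[P_\tau,P_\tau],\ \mathrm{Ad}_g\xi) \): the slice \( Q^{\circ,JCF}_{[\sim,\cO]} \) is an open subset of a hypertoric variety whose points carry residual torus coordinates beyond the moment-map level and the nilpotent representative, and it is exactly these coordinates that the exterior-power components must pin down. Three further inaccuracies compound this: the \( \lie k_\C \)-component is \( \mathrm{Ad}_g(s+\xi) \), not \( \mathrm{Ad}_g\xi \); the coset you extract from the \( E \)-component in Stage~2 is \( gt[P_\tau,P_\tau] \) for a quiver-dependent torus element \( t \) (the highest-weight scalars are entangled with \( g \)), not \( g[P_\tau,P_\tau] \); and the relevant stabiliser of the slice is \( R_{[\sim,\cO]} \), the centraliser in \( (K_\sim)_\C \) of the Jordan-form nilpotent, which in general is neither contained in nor equal to \( [P_\tau,P_\tau] \) (for \( \sim \) trivial it is \( T_\C \) while \( [P_\tau,P_\tau]=N \)).

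The missing content is precisely what the paper supplies in Lemma~\ref{sigmat} and Lemma~\ref{lem6.4}: an explicit computation on the hypertoric variety \( Q_T \) showing that the products \( \prod_{k=j}^{n-1}\prod_{i=1}^{j}(u\nu^k_i+v\bar\mu^k_i) \) occurring in the \( \wedge^j\C^n \)-components are nonzero exactly when \( j+1 \) is minimal in its \( \sim \)-class, which forces the \( T \)-stabiliser of \( \sigma_T(\q) \) into \( T\cap[K_\lambda,K_\lambda] \) and hence shows that \( \sigma_T \) separates \( T \)-orbits on fibres of the \( T \)-moment map; this is then transported to \( Q^{\circ,JCF}_{[\sim,\cO]} \) by the substitution \( \q\mapsto\q^T \) of Remark~\ref{remqt}, which leaves \( (\sigma_T)_{[1:0]} \) unchanged. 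The \( K_\C \)-ambiguity is controlled not through the \( E \)-component but through the \( \lie k_\C \)-component: if \( g\,\sigma_{[1:0]}(\q)\in\sigma_{[1:0]}(Q^{\circ,JCF}_{[\sim,\cO]}) \) then \( g\in R_{[\sim,\cO]} \), after which the standard twisted-product argument applies. Until you prove a statement of the strength of Lemma~\ref{sigmat} --- that the fundamental-representation components distinguish points lying in a single fibre of the \( K\times T \) moment map --- the argument is incomplete.
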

\begin{remark}
  \label{rem4.12}
  It follows from Remark~\ref{remsigmatilde} that if \( q \in Q \) then \(
  \sigma(q) \) determines the image of \( q \) under the hyperk\"ahler
  moment maps for the actions of \( T \) and \( K \) on \( Q
  \). Recall that the hyperk\"ahler reductions by the action of \( T
  \) on the universal hyperk\"ahler implosion \( Q \) for \( K=\SU(n)
  \) are closures of coadjoint orbits of \( K_\C = \SL(n,\C) \). In
  particular the hyperk\"ahler reduction at level 0 is the nilpotent
  cone for \( K_\C \), which is identified in \cite{KS} with the
  hyperk\"ahler quotient \( M\hkq \tilde{H} \), where \( \tilde{H} =
  \prod_{k=1}^{n-1} U(k) \). This hyperk\"ahler quotient carries an \(
  \SU(n) \) action induced from the action of this group on the top
  space \( \C^n \) of the quiver.  We recalled in
  \S \ref {sec:nilpotent-cone} that, for any choices of complex
  structures, the complex moment map \( M\hkq \tilde{H} \to \lie k _\C
  \) for this action induces a bijection from \( M\hkq \tilde{H} \)
  onto the nilpotent cone in \( \lie k _\C \), and thus the
  hyperk\"ahler moment map \( M\hkq \tilde{H} \to \lie k \otimes \R^3
  \) provides a bijection from \( M\hkq \tilde{H} \) to its image in
  \( \lie k \otimes \R^3 \). Moreover this image is a \( K \times
  \SU(2) \)-invariant subset \( \Nil(K) \) of \( \lie k \otimes \R^3
  \) such that after acting by any element of \( \SU(2) \) the
  projection \( \Nil(K) \to \lie k _\C \) given by the decomposition
  \( \R^3 = \C \oplus \R \) is a bijection onto the nilpotent cone in
  \( \lie k _\C \).
\end{remark}

We obtain an induced map \( \tilde{\sigma} \) from the twistor space
\( \twist_Q \) of \( Q \) to the vector bundle
\begin{equation*}
  \cO(2) \otimes (\lie k_\C \oplus \lie t_\C) \oplus 
  \bigoplus_{j=1}^{n-1}  \cO(\ell_j) \otimes \wedge^j\C^n
\end{equation*}
over \( \PP^1 \). It is the composition of the product of the identity
on \( \PP^1 \) and \( \sigma \) from \( Q \) to \( \mathcal{R} \)
with the natural evaluation map from
\(   \PP^1 \times \mathcal{R} \)  
 to
\begin{equation*}
  \cO(2) \otimes (\lie k_\C \oplus \lie t_\C) \oplus 
  \bigoplus_{j-1}^{n-1}  \cO(\ell_j) \otimes \wedge^j\C^n.
\end{equation*}
As in Remark~\ref{remsigmatilde} we see that \( \tilde{\sigma} \) is
holomorphic and \( K \times T \times \SU(2) \)-equivariant.  We will
prove in \S \ref {sec:embeddings}:

\begin{theorem}
  \label{thmsigmatilde}
  The map
  \begin{equation*}
    \tilde{\sigma} \colon  \twist_Q \to
    \cO(2) \otimes (\lie k_\C \oplus \lie t_\C) \oplus 
    \bigoplus_{j=1}^{n-1}  \cO(\ell_j) \otimes \wedge^j\C^n
  \end{equation*}
  is generically injective; that is, its restriction to a dense
  Zariski-open subset of \( \twist_Q \) is injective.
\end{theorem}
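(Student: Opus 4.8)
The plan is to invert \( \tilde\sigma \) explicitly over a Zariski-dense open subset of \( \twist_Q \). First observe that \( \tilde\sigma \) covers the twistor projection \( \pi\colon\twist_Q\to\PP^1 \): by its very definition it is the composite of \( \sigma\times\mathrm{id}_{\PP^1} \) with the evaluation map \( \PP^1\times\mathcal R\to \cO(2)\otimes(\lie k_\C\oplus\lie t_\C)\oplus\bigoplus_{j}\cO(\ell_j)\otimes\wedge^j\C^n \), and evaluation carries \( (\zeta,s) \) into the fibre over \( \zeta \). Thus \( \zeta \) is recovered from \( \tilde\sigma(q,\zeta) \), and it suffices to reconstruct \( q \) from \( \tilde\sigma(q,\zeta) \) for \( (q,\zeta) \) in a dense open subset of \( \twist_Q \). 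Fix \( \zeta=[u:v]\in\PP^1 \) and work in the corresponding complex structure, in which \( Q \) is the GIT quotient by \( H_\C=\prod_{k=1}^{n-1}\SL(k,\C) \) of the quivers \( (\alpha,\beta) \) with \( \alpha_i\beta_i-\beta_{i+1}\alpha_{i+1}=\lambda^\C_{i+1}I \), where now \( \alpha_i,\beta_i \) denote the \emph{holomorphic} quiver maps attached to \( \zeta \), namely \( u\alpha_i+v\beta_i^* \) and \( -v\alpha_i^*+u\beta_i \). Let \( U\subseteq Q \) be the locus on which every \( \alpha_i \) is injective; it is nonempty by Remark~\ref{kcbyn} (or the hypertoric description of Remark~\ref{hypertoricT}), and since \( Q \) is irreducible --- being the closure of the \( K_\C \)-sweep \( K_\C Q_T \), see \cite{DKS,DKS2} --- \( U \) is dense open.

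The reconstruction runs as follows. Let \( q\in U \) with representing quiver \( (\alpha,\beta) \). Each tail composite \( c_j=\alpha_{n-1}\alpha_{n-2}\cdots\alpha_j\colon\C^j\to\C^n \) is then injective, so \( \sigma_j^{(\alpha,\beta)}=\wedge^j c_j \) is a nonzero decomposable element of \( \wedge^j\C^n \); such an element determines \( c_j \) up to the right action of \( \SL(j,\C) \). Since \( c_j=c_{j+1}\alpha_j \) with \( c_{j+1} \) injective, the family \( (c_j)_j \) determines \( (\alpha_j)_j \), and one checks directly that replacing the \( c_j \) by \( c_jh_j^{-1} \) with \( h_j\in\SL(j,\C) \) replaces \( (\alpha_j)_j \) by its image under the quiver action of \( (h_1,\dots,h_{n-1})\in H_\C \). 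Hence the \( \wedge^j \)-components of \( \tilde\sigma(q,\zeta) \) pin down \( (\alpha_i)_i \) up to \( H_\C \); normalising, we may assume any \( q' \) with \( \tilde\sigma(q',\zeta)=\tilde\sigma(q,\zeta) \) has the same \( \alpha \)-maps. Next, the \( \lie k_\C \)-component of \( \tilde\sigma(q,\zeta) \) is \( (X)_0=(\alpha_{n-1}\beta_{n-1})_0 \) (Remark~\ref{remsigmatilde}); since \( X=\alpha_{n-1}\beta_{n-1} \) has image in the proper subspace \( \im\alpha_{n-1}\subsetneq\C^n \), the scalar \( \tfrac1n\tr X \) is the unique \( c \) with \( \im((X)_0+cI)\subseteq\im\alpha_{n-1} \), so \( X \) is recovered, and injectivity of \( \alpha_{n-1} \) gives \( \beta_{n-1} \). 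Descending the quiver: \( \lambda^\C_i \) is the unique scalar with \( \im(\beta_i\alpha_i+\lambda^\C_iI)\subseteq\im\alpha_{i-1} \) (it is in any case recorded by the \( \lie t_\C \)-component of \( \tilde\sigma(q,\zeta) \)), whence \( \alpha_{i-1}\beta_{i-1}=\beta_i\alpha_i+\lambda^\C_iI \) together with injectivity of \( \alpha_{i-1} \) yields \( \beta_{i-1} \). So \( (\alpha,\beta) \), and therefore \( q \), is determined by \( \tilde\sigma(q,\zeta) \); that is, \( \sigma_\zeta|_U \) is injective.

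Finally, the locus \( \widetilde U=\{(q,\zeta)\in\twist_Q:\text{all the holomorphic quiver maps }\alpha_i^\zeta(q)\text{ are injective}\} \) is Zariski-open in \( \twist_Q \) --- it is the non-vanishing of suitable products of minors, which are holomorphic on \( \twist_Q \) --- and nonempty by Remark~\ref{kcbyn}; as \( \twist_Q \) is irreducible, \( \widetilde U \) is dense, and by the previous paragraph \( \tilde\sigma|_{\widetilde U} \) is injective, which proves generic injectivity. The steps requiring care are the two linear-algebra observations used in the reconstruction --- that a nonzero decomposable \( j \)-vector recovers an injective map \( \C^j\to\C^n \) up to \( \SL(j,\C) \), and that this ambiguity matches the \( H_\C \)-action on the quiver --- together with the identification (via Remark~\ref{remsigmatilde}) of the \( \lie k_\C \)- and \( \lie t_\C \)-components of \( \sigma_\zeta \) with the complex moment maps, so that \( X \) and the \( \lambda^\C_i \) can be pinned down from \( \tilde\sigma(q,\zeta) \) alone; I expect the density statement for \( \widetilde U \) (hence the irreducibility of \( \twist_Q \)) to be the only non-routine global input. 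The failure of injectivity for \( n=2 \), to be discussed in \S\ref{sec:twist-space-univ}, simply reflects that \( \widetilde U\neq\twist_Q \) in general, so no strengthening to genuine injectivity holds.
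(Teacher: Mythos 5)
Your argument is correct, but it takes a genuinely different route from the paper's. The paper obtains Theorem~\ref{thmsigmatilde} as a by-product of the machinery developed for Theorem~\ref{thmsigma}: it stratifies \( Q \) into the pieces \( Q_{[\sim,\cO]} \), uses the structure theorem \( Q^\circ_{[\sim,\cO]}\cong K_\C\times_{R_{[\sim,\cO]}}Q^{\circ,JCF}_{[\sim,\cO]} \) of Theorem~\ref{thm6.8} to reduce injectivity of the evaluation \( \sigma_{[1:0]} \) on each \( Q^\circ_{[\sim,\cO]} \) to injectivity on the hypertoric slice \( Q^{\circ,JCF}_{[\sim,\cO]} \) (Proposition~\ref{proplast} and Lemma~\ref{lem6.4}), and proves the latter by the torus-stabiliser computation of Lemma~\ref{sigmat}; generic injectivity of \( \tilde\sigma \) then follows by \( \SU(2) \)-equivariance, the dense open set being the \( \SU(2) \)-sweep of \( \{[1:0]\}\times Q^\circ_{[\sim,\cO]} \) for the open stratum. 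You instead invert \( \tilde\sigma \) explicitly on the locus where all the rotated maps \( u\alpha_i+v\beta_i^* \) are injective, bypassing the stratification and the hypertoric variety entirely; your two linear-algebra steps are sound (a nonzero decomposable \( j \)-vector recovers the injective \( c_j \) up to precomposition by \( \SL(j,\C) \), and that ambiguity is exactly the \( H_\C \)-action, with \( \alpha_j\mapsto h_{j+1}\alpha_j h_j^{-1} \) and \( h_n=1 \)), as are the recovery of \( \tr X \) from \( \im\alpha_{n-1}\subsetneq\C^n \) and the downward recursion \( \alpha_{i-1}\beta_{i-1}=\beta_i\alpha_i+\lambda_i^\C I \). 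Two points are worth tightening: define \( U \) as the non-vanishing locus of the \emph{invariant} sections \( \sigma_j \) (equivalent to all \( \alpha_i \) injective, since all \( c_j \) are injective iff all \( \alpha_j \) are), so that the condition is manifestly well defined on the GIT quotient, Zariski-open, and holds for the closed-orbit representative on which you run the reconstruction; and density of \( \widetilde U \) needs \( U \) nonempty in \emph{every} fibre of \( \pi \), which follows from \( \SU(2) \)-equivariance together with Remark~\ref{kcbyn} and the irreducibility \( Q=\overline{K_\C Q_T} \) that you invoke. What the paper's route buys is the stronger Theorem~\ref{thmsigma} (injectivity of \( \sigma \) on all of \( Q \), including the points your \( \widetilde U \) misses, which is why the stratum-by-stratum analysis cannot be avoided there) and a concretely described larger injectivity locus; what yours buys is a short, self-contained, constructive proof of generic injectivity of \( \tilde\sigma \) that also makes the failure of injectivity at non-injective quivers (as in the \( n=2 \) example of \S\ref{sec:twist-space-univ}) completely transparent.
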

\begin{remark}
  It follows from Remark~\ref{kcbyn} and \eqref{eqnkappa} below that the
  image of \( \tilde{\sigma} \) is contained in the subvariety of \(
  \cO(2) \otimes (\lie k_\C \oplus \lie t_\C) \oplus
  \bigoplus_{j=1}^{n-1} \cO(\ell_j) \otimes \wedge^j\C^n \) whose fibre at
  any \( p \in \PP^1 \) is identified (after choosing any basis vector
  for \( \cO(1)_p \) and thus for \( \cO(\ell_j)_p \) for all \( j
  \geqslant 1 \)) with the product of
  \begin{equation*}
    \{\, (\eta, \xi) \in \lie k_\C \times \lie t_\C: \text{\( \eta \)
    and \( \xi \) have the same eigenvalues}\, \} 
  \end{equation*}
  and the canonical affine completion \( K_\C \symp N \) of \( K_\C/N
  \). We can also impose the condition provided by Lemma~\ref{DKSlem5.9}.
  The image of \( \sigma \) satisfies analogous constraints.
\end{remark}

\section{Stratifying the universal hyperk\"ahler implosion for \(
\SU(n) \) and its twistor space}
\label{sec:strat-univ-hyperk}

In this section we recall the stratification given in \cite{DKS} of
the universal hyperk\"ahler implosion \( Q \) for \( K=\SU(n) \) into
strata which are hyperk\"ahler manifolds, and its refinement in
\cite{DKS2}. The refined stratification has strata \( Q_{[\sim,\cO]}
\) indexed in terms of Levi subgroups and nilpotent orbits in \(
K_\C=\SL(n, \C) \). The latter stratification is not hyperk\"ahler but
reflects well the group structure of \( K=\SU(n) \).  These
stratifications induce corresponding stratifications of the twistor
space of \( Q \).

First of all, given a quiver we may decompose each space in the quiver
into generalised eigenspaces \( \ker (\alpha_i \beta_i - \tau I)^m \)
of \( \alpha_i \beta_i \).  We showed in \cite{DKS} using the complex
moment map equations \eqref{eq:mmcomplex}, that \( \beta_i \) and \(
\alpha_i \) preserve this decomposition.  More precisely, we have
\begin{equation}
  \label{betai}
  \beta_i \colon \ker (\alpha_i \beta_i - \tau I)^m \rightarrow
  \ker(\alpha_{i-1} \beta_{i-1} - (\lambda_i^\C + \tau)I)^m. 
\end{equation}
and
\begin{equation}
  \label{alphai}
  \alpha_i \colon \ker (\alpha_{i-1} \beta_{i-1} - (\lambda_i^\C +
  \tau) I)^m \rightarrow \ker (\alpha_i \beta_i - \tau I)^m. 
\end{equation}
So we actually have a decomposition into subquivers.  Moreover we
showed the maps \eqref{betai} and \eqref{alphai} are bijective unless
\( \tau = 0 \).

It follows that \( \tau \neq 0 \) is an eigenvalue of \( \alpha_i
\beta_i \) if and only if \( \tau + \lambda_i^\C \neq \lambda_i^\C \)
is an eigenvalue of \( \alpha_{i-1} \beta_{i-1} \). Moreover \(
\alpha_i \beta_i \) has zero as an eigenvalue and \( \alpha_i, \beta_i
\) restrict to maps between the associated generalised eigenspace with
eigenvalue \( 0 \) and the generalised eigenspace for \( \alpha_{i-1}
\beta_{i-1} \) associated to \( \lambda_i^\C \) (which could be the
zero space).

One can deduce that the trace-free part \( X^0 \) of \( X= \alpha_{n-1}
\beta_{n-1} \) now has eigenvalues \( \kappa_1,\dots,\kappa_n \),
where
\begin{equation*}
  \begin{split}
    \kappa_j &= \frac1n\Bigl( \lambda^\C_1 + 2 \lambda_2^\C + \dots +
    ({j-1}) \lambda^\C_{j-1} \eqbreak[4] -(n-j) \lambda_j^\C -
    (n-{j-1})\lambda_{j+1}^\C - \dots - \lambda_{n-1}^\C\Bigr).
  \end{split}
\end{equation*} 
In particular if \( i<j \) then
\begin{equation} \label{eqnkappa} \kappa_j - \kappa_i = \lambda_i^\C +
  \lambda_{i+1}^\C + \dots + \lambda_{j-1}^\C .
\end{equation}
This shows that to understand the quiver it is important to understand
when collections of \( \lambda_i \) sum to zero.  

Now we recall that
\begin{equation*}
  T = (S^1)^{n-1} = \prod_{k=1}^{n-1} \Un(k)/\SU(k) = \tilde{H}/H
\end{equation*}
acts on \( Q = M \hkq H \) with hyperk\"ahler moment map
\begin{equation*}
  \mu_{(S^1)^{n-1}}\colon Q \to \tf \otimes \R^3 = (\R^3)^{n-1} = 
  (\C \oplus \R)^{n-1}
\end{equation*}
which maps a quiver to
\begin{equation*}
  (\lambda_1, \ldots, \lambda_{n-1})= (\lambda_1^\C,\lambda_1^\R, \ldots, \lambda_{n-1}^\C,\lambda_{n-1}^\R).
\end{equation*}

\begin{definition}  
  For each choice of \( (\lambda_1, \ldots, \lambda_{n-1}) \) we
  define an equivalence relation \( \sim \) on \( \{1,\dots,n\} \) by
  declaring that if \( 1 \leqslant i<j \leqslant n \) then
  \begin{equation*}
    i \sim j \iff \sum_{k=i}^{j-1} \lambda_k = 0
    \ \text{in}\ \R^3 .
  \end{equation*}
  There is thus a stratification of \( (\R^3)^{n-1}=\tf \otimes \R^3
  \) into strata \( (\R^3)^{n-1}_{\sim} = (\tf \otimes \R^3)_\sim \),
  indexed by the set of equivalence relations \( \sim \) on \(
  \{1,\dots,n\} \), where
  \begin{gather*}
    (\R^3)^{n-1}_{\sim} = \{ (\lambda_1, \ldots, \lambda_{n-1}) \in
    (\R^3)^{n-1}:
    \text{if}\ 1 \leqslant i<j \leqslant n\ \text{then}\\
    i \sim j \iff \sum_{k=i}^{j-1} \lambda_k = 0 \ \text{in}\ \R^3\}.
  \end{gather*}
\end{definition}

Under the identification of \( T \) with \( (S^1)^{n-1} \) using the
positive simple roots as a basis for \( \lie t \) this stratification
of \( (\R^3)^{n-1}= \tf \otimes \R^3 \) is induced by the
stratification of \( \tf \) associated to the root planes in \( \tf \)
(see \cite[\S3]{DKS} and Remark~\ref{hypertoricT}).

We thus obtain a stratification of \( Q \) into subsets \( Q_{\sim}
\), which are the preimage in \( Q \) under \( \mu_{(S^1)^{n-1}} \) of
\( (\R^3)^{n-1}_{\sim} \).

The choice of \( \sim \) corresponds to the choice of a subgroup \(
K_\sim \) of \( K \) which is the compact real form of a Levi subgroup
of \( K_\C \); this subgroup \( K_\sim \) is the centraliser of \(
\mu_{(S^1)^{n-1}}(\q) \in \tf \otimes \R^3 \) for any \( \q \in
Q_{\sim} \).

We observe from \eqref{eqnkappa} that if \( i \sim j \) then we have
equality of the eigenvalues \( \kappa_i \) and \( \kappa_j \).

Now let \( Q^\circ \) denote the subset of \( Q \) consisting of
quivers such that
\begin{equation*}
  \sum_{k=i}^{j-1} \lambda_k = 0
  \ \text{in}\ \R^3 \iff \sum_{k=i}^{j-1} \lambda_k^\C = 0
  \ \text{in}\ \C,
\end{equation*}
and for each equivalence relation \( \sim \) on \( \{1,2,\ldots, n\}
\) let \( Q^\circ_\sim \) denote its intersection with \( Q_\sim \),
consisting of quivers such that
\begin{equation*}
  i \sim j \iff
  \sum_{k=i}^{j-1} \lambda_k = 0
  \ \text{in}\ \R^3 \iff \sum_{k=i}^{j-1} \lambda_k^\C = 0
  \ \text{in}\ \C.
\end{equation*}
The full implosion \( Q \) is the sweep of \( Q^\circ \) under the \(
SU(2) \) action.

For a quiver \( \q \) in \( Q^\circ \) the equivalence relation \(
\sim \) for which \( \q \in Q_\sim \) is determined by the fact that
we have equality of the eigenvalues \( \kappa_i \) and \( \kappa_j \)
of the trace-free part \( X^0 \) of \( X= \alpha_{n-1} \beta_{n-1} \) if
and only if \( i \sim j \). In particular if \( \q\in Q^\circ \) then
\( (K_\sim)_\C \) is the subgroup of \( K_\C \) which preserves the
decomposition of \( \q \) into the subquivers determined by the
generalised eigenspaces of the compositions \( \alpha_i\beta_i \).

\begin{remark}
  Unfortunately \( Q^\circ \) is not an open subset of \( Q \),
  although its intersection \( Q^\circ_\sim \) with \( Q_\sim \) is
  open in \( Q_\sim \) for each \( \sim \). In fact we will show in
  forthcoming work that there is a desingularisation \( \hat{Q} \) of
  \( Q \) covered by open subsets \( s \hat{Q}^\circ \) for \( s \in
  \SU(2) \) such that the image of the open subset \( \hat{Q}^\circ \)
  of \( \hat{Q} \) under \( \hat{Q} \to Q \) is \( Q^\circ
  \).
\end{remark}

Let us now return to considering the decomposition of the quiver into
subquivers
\begin{equation*} \cdots V_i^j
  \stackrel[\beta_{i,j}]{\alpha_{i,j}}{\rightleftarrows} V_{i+1}^j
  \cdots
\end{equation*}
determined by the generalised eigenspaces (with eigenvalues \(
\tau_{i+1, j} \)) of the compositions \( \alpha_i\beta_i \), such that
\begin{equation*}
  \alpha_{i,j} \beta_{i,j} - \beta_{i+1,j} \alpha_{i+1,
  j} = \lambda_{i+1}^\C
\end{equation*}
and \( \alpha_{i,j} \) and \( \beta_{i,j} \) are isomorphisms unless
\( \tau_{i+1, j}=0 \).  If for some \( j \) we have that \(
\alpha_{k,j}, \beta_{k,j} \) are isomorphisms for \( i+1 \leqslant k <
s \) but not for \( k=i,s \), then it follows that \( \tau_{i+1,j} =
\tau_{s+1,j}=0 \), hence \( \sum_{k=i+1}^s \lambda_k^\C =0 \), and so
since the quiver lies in \( Q_{\sim}^\circ \) we have
\begin{equation*}
  \sum_{k=i+1}^s \lambda_k =0 \in \R^3.
\end{equation*}

As explained in \cite{DKS} and \cite{DKS2}, we may contract the
subquivers at edges where the maps are isomorphisms. Explicitly, if \(
\alpha_{i,j} \) and \( \beta_{i,j} \) are isomorphisms (which will
occur when the associated \( \tau_{i+1,j} \) is non-zero), then we may
replace
\begin{equation*}
  \cdots  V_{i-1}^j
  \stackrel[\beta_{i-1,j}]{\alpha_{i-1,j}}{\rightleftarrows}
  V_i^j
  \stackrel[\beta_{i,j}]{\alpha_{i,j}}{\rightleftarrows}
  V_{i+1}^j  
  \stackrel[\beta_{i+1,j}]{\alpha_{i+1,j}}{\rightleftarrows}
  V_{i+2}^j
\end{equation*}
with
\begin{equation*}
  V_{i-1}^j
  \stackrel[\beta_{i-1,j}]{\alpha_{i-1,j}}{\rightleftarrows}
  V_i^j
  \stackrel[(\alpha_{i,j})^{-1}\beta_{i+1,j}]{\alpha_{i+1,j}  \alpha_{i,j}}
  {\rightleftarrows} V_{i+2}^j,
\end{equation*}
and then the complex moment map equations are satisfied with
\begin{equation*}
  \alpha_{i-1,j}\beta_{i-1,j} - (\alpha_{i,j})^{-1}\beta_{i+1,j} \alpha_{i+1,j} 
  \alpha_{i,j} = \lambda_{i-1}^\C + \lambda_i^\C.
\end{equation*}
If we fix an identification of \( V_{i+1}^j \) with \( V_i^j \) and
apply the action of \( \SL(V_{i,j}) \) so that \( \alpha_{i,j} \) is a
non-zero scalar multiple \( aI \) of the identity, then \( \beta_{i,j}
\) is determined by \( \alpha_{i-1,j}, \alpha_{i+1,j}, \beta_{i-1,j},
\beta_{i+1,j} \) and the scalars \( a \) and \( \lambda_i^\C \) via
the equations \eqref{eq:mmcomplex} (see \cite{DKS} for more details).

After performing such contractions, the resulting quivers satisfy the
complex moment map equations with zero scalars. In other words, they
satisfy the complex moment map equations for the product of the
relevant \( \GL(V_i^j) \). In fact, because our quiver is in \(
Q^\circ \), the full hyperk\"ahler moment map equations for the
associated product of unitary groups are satisfied, and the orbits
under the action of the complex group are closed.

\begin{remark}
  \label{contract}
  We are now in the situation analysed by the third author and
  Kobak~\cite{KS} in their construction of the nilpotent variety, as
  discussed in \S \ref {sec:nilpotent-cone}. Their results, in particular
  Theorem 2.1 (cf.\ \cite{DKS} Proposition 5.16), show that each
  contracted subquiver is the direct sum of a quiver where all \(
  \alpha \) are injective and all \( \beta \) are surjective and a
  quiver in which all maps are \( 0 \). Moreover the direct sum of the
  contracted subquivers is completely determined (modulo the action of
  the product of the \( \GL \) groups) by the elements \( \alpha_{n-1}
  \beta_{n-1} \) at the top edge of each injective/surjective
  subquiver.  The argument of \cite{KS} shows these are actually
  nilpotent.

  We also observe that because \( \sim \) determines the decomposition
  of the original quiver into eigenspaces, the direct sum of these
  nilpotents is actually a nilpotent element of \( (\lie k _\sim)_\C
  \). It coincides with \( X^0_n \), the nilpotent part in the Jordan
  decomposition of the trace-free part \( X^0 \in \lie k_\C \) of \( X
  = \alpha_{n-1}\beta_{n-1} \).  (Recall that this is the unique
  decomposition \( X^0 = X^0_s + X^0_n \) where \( X^0_s \) and \(
  X^0_n \) in \( \lie k_\C \) satisfy \( [X^0_s,X^0_n] = 0 \) and \(
  X^0_s \) is semisimple while \( X^0_n \) is nilpotent).
  Furthermore, given \( \sim \), the adjoint orbit of this nilpotent
  element in \( (\lie k _\sim)_\C \) corresponds precisely to
  determining the dimensions of the various vector spaces in the
  injective/surjective subquivers (see \cite{DKS2} Remarks 5.10 and
  5.11).  For example, if a quiver has all \( \lambda_i=0 \), then \(
  \sim \) has a single equivalence class, \( \lie k _\sim = \lie k \),
  and the choice of \( \cO \) is just the choice of a nilpotent orbit
  in \( \lie k _{\C} \).  At the other extreme, if no non-trivial sums
  are zero, the equivalence classes are singletons and \( K_\sim \) is
  a torus. The orbit \( \cO \) must now be zero.
\end{remark}

To each quiver in \( Q^\circ \) we have associated an equivalence
relation \( \sim \) and a nilpotent orbit \( \cO \) in \( (\lie k
_\sim)_\C \).  Let \( Q_{[\sim, \cO]}^\circ \) denote the set of
quivers in \( Q^\circ \) with given \( \sim \) and \( \cO \), and let
\( Q_{[\sim, \cO]} \) denote the \( SU(2) \) sweep of \( Q_{[\sim,
\cO]}^\circ \).  We may therefore stratify \( Q \) as a disjoint union
\begin{equation*} Q = \coprod_{\sim, \cO} Q_{[\sim,\cO]}
\end{equation*}
over all equivalence relations \( \sim \) on \( \{1,\ldots,n\} \) and
all nilpotent adjoint orbits \( \cO \) in \( (\lie k _\sim)_\C \).
\begin{remark}
  Defining \( Q_{[\sim, \cO]} \) as the \( SU(2) \) sweep of \(
  Q_{[\sim, \cO]}^\circ \) in this way, it is not clear that the
  strata \( Q_{[\sim, \cO]} \) are disjoint. Hence in \cite{DKS2} a
  different approach is taken in which the stratification \( \{
  Q_{[\sim, \cO]} \} \) of \( Q \) is initially indexed differently;
  the equivalence between the two viewpoints is made in \cite{DKS2}
  Remark 5.13.
\end{remark}
\begin{remark} \label{detstratum} The stratum \( Q_{[\sim, \cO]} \) in
  which a quiver lies is determined by the values at the quiver of the
  hyperk\"ahler moment maps for the actions on \( Q \) of \( K =
  \SU(n) \) and \( T = (S^1)^{n-1} \).

  For the value \( (\lambda_1, \dots, \lambda_{n-1}) \) of \(
  \mu_{(S^1)^{n-1}} \) determines the equivalence relation \( \sim \)
  and also the generic choices of complex structures for which
  \begin{equation} \label{star} \sum_{k=i}^{j-1} \lambda_k = 0 \
    \text{in}\ \R^3 \iff \sum_{k=i}^{j-1} \lambda_k^\C = 0 \
    \text{in}\ \C.
  \end{equation}
  Moreover for such choices of complex structures the quiver
  decomposes as a direct sum of subquivers determined by the
  generalised eigenspaces of the composition \(
  \alpha_{n-1}\beta_{n-1} \), and this is given by the complex moment
  map for the action of \( K \).  It follows that the Jordan type of
  \( \alpha_{n-1}\beta_{n-1} \) (for one of the generic choices of
  complex structures for which \eqref{star} holds) determines the
  nilpotent orbit \( \cO \) in \( (\lie k_\sim)_\C \).
\end{remark}

\begin{remark}
  The stratification of \( Q \) into strata \( Q_{[\sim, \cO]} \)
  induces a stratification of the twistor space \( \twist_Q \) into
  strata \( (\twist_Q)_{[\sim, \cO]} \).
\end{remark}

Let \( \sim \) be an equivalence relation on \( \{1, \ldots, n\} \),
and let \( \cO \) be a nilpotent adjoint orbit in \( (\lie k _\sim)_\C
\).  In \cite{DKS2} we explained how quivers in \( Q_{[\sim, \cO]}^\circ
\) may be put in standard forms using Jordan canonical form.

As above we first decompose \( \q \) into a direct sum of subquivers
determined by the generalised eigenspaces of the compositions \(
\alpha_i\beta_i \).  Since \( \q \) lies in \( Q_{[\sim,\cO]}^\circ \)
each such subquiver is the direct sum of a quiver \( \q^{[j]} \) of
the form
\begin{equation} \label{eeq6.8} 0
  \stackrel[\beta^{[j]}_0]{\alpha^{[j]}_0}{\rightleftarrows}
  \C^{m_1}\stackrel[\beta_1^{[j]}]{\alpha_1^{[j]}}{\rightleftarrows}
  \C^{m_2}\stackrel[\beta_2^{[j]}]{\alpha_2^{[j]}}{\rightleftarrows}\dots
  \stackrel[\beta_{n-2}^{[j]}]{\alpha_{n-2}^{[j]}}{\rightleftarrows}
  \C^{m_{n-1}}
  \stackrel[\beta_{n-1}^{[j]}]{\alpha_{n-1}^{[j]}}{\rightleftarrows}
  \C^{m_n}
\end{equation}
where the maps \( \alpha_k^{[j]} \) for \( 1\leqslant k \leqslant n-1
\) are injective and the maps \( \beta_k^{[j]} \) for \( 1\leqslant k
\leqslant n-1 \) are surjective, together with quivers of the form
(for \( 1 \leqslant h \leqslant p \) )
\begin{equation*}
  \C^{d_h}
  \stackrel[\beta_{i_h}^{(h)}]{\alpha_{i_h}^{(h)}}{\rightleftarrows}
  \C^{d_h} \rightleftarrows \dots \rightleftarrows \C^{d_h}
  \stackrel[\beta_{j_h-2}^{(h)}]{\alpha_{j_h-2}^{(h)}}{\rightleftarrows}
  \C^{d_h}    
\end{equation*}
in the places \( i_h,i_h + 1, \dots ,j_h -1 \), where the maps \(
\alpha_k^{(h)} \), \( \beta_k^{(h)} \), for \( i_h \leqslant k < j_h -
1 \), are multiplication by complex scalars such that \(
\gamma_k^{(h)} = \alpha_k^{(h)} + j \beta_k^{(h)} \in \HH\setminus
\{0\} \). Moreover the combinatorial data here and the Jordan type of
\( \alpha_{n-1}\beta_{n-1} \) for each summand \eqref{eeq6.8} is
determined by the pair \( (\sim,\cO) \).

As explained in \cite{DKS2}, we may use complex linear changes of
coordinates in \( K_\C \times H_\C = \prod_{k=1}^n \SL(k,\C) \) to put
\( \alpha_{n-1}\beta_{n-1} \) into Jordan canonical form and then
decompose the quiver \eqref{eeq6.8} into a direct sum of quivers
determined by the Jordan blocks of \(
\alpha^{[j]}_{n-1}\beta^{[j]}_{n-1} \). More precisely, \(
\alpha^{[j]}_k \) is a direct sum over the set \( B_j \) of Jordan
blocks for \( \alpha^{[j]}_{n-1}\beta^{[j]}_{n-1} \) of matrices of
the form
\begin{equation} \label{formstar}
  \begin{pmatrix}
    \xi_1^{bjk} & 0 & \cdots & 0 & 0  \\
    \nu_1^{bjk} & \xi_2^{bjk} & 0 & \cdots & 0\\
    0 & \nu_2^{bjk} &  & \cdots & 0\\
    & & \cdots & & \\
 0 &  \cdots & 0 & \nu^{bjk}_{\ell_b - n + k -1}
    & \xi^{bjk}_{\ell_b - n +k }  \\
    0 & \cdots & 0 & 0 & \nu^{bjk}_{\ell_b - n+k}
  \end{pmatrix} \end{equation} for some \( \nu^{bjk}_i, \xi_i^{bjk}
\in\C^* \) where \( \ell_b \) is the size of the Jordan block \( b \in
B_j \), while \( \beta_k^{[j]} \) is a corresponding direct sum over
\( b \in B_j \) of matrices of the form
\begin{equation} \label{formstarbeta}
  \begin{pmatrix}
    0 & \mu_1^{bjk} & 0
    & 0 & \cdots & 0\\
    0 & 0 & \mu_2^{bjk} & 0
    & \cdots & 0\\
    & & \cdots & & & \\
    0 & 0 & \cdots & 0 & \mu^{bjk}_{\ell_b - n + k -1}
    &
    0 \\
    0 & 0 & \cdots & 0 & 0 & \mu^{bjk}_{\ell_b -
    n+k}
  \end{pmatrix}
\end{equation}
for some \( \mu_i^{bjk} \in\C^* \), all satisfying the complex moment
map equations \eqref{eq:mmcomplex}. The quiver given by the direct sum
over all the Jordan blocks \( \bigcup_{j}B_j \) for \(
\alpha_{n-1}\beta_{n-1} \) has closed \( (H_S)_\C \)-orbit.  If we
allow complex linear changes of coordinates in \( K_\C \times
\tilde{H}_\C = \SL(n, \C) \times \prod_{k=1}^{n-1} \GL(k,\C) \) (or
equivalently allow the action of its quotient group \( K_\C \times
T_\C \) on \( Q_{[\sim,\cO]} \)), then the quiver can be put into a
more restricted form which is completely determined by \(
\alpha_{n-1}\beta_{n-1} \) and \( (\lambda_1^\C, \ldots,
\lambda_{n-1}^\C) \), and hence by the value of the complex moment map
for the action of \( K \times T \) on \( Q \).

\begin{remark}
  Let \( \{ e_1, \ldots, e_n\} \) be the standard basis for \( \C^n
  \). When complex linear changes of coordinates in \( K_\C \times H_\C
  \) are used to put the quiver in the standard form given by
\eqref{formstar}, \eqref{formstarbeta} then
  \begin{equation*}
    \wedge^j \alpha_{n-1} \circ \cdots \circ \alpha_j
  \end{equation*}
  takes the standard basis vector for \( \wedge^j \C^j \) to a scalar
  multiple of \( e_1 \wedge \cdots \wedge e_j \in \wedge^j \C^n \).
\end{remark}

Let \( Q_{[\sim,\cO]}^{\circ,JCF} \) be the subset of \(
Q_{[\sim,\cO]}^{\circ} \) representing quivers of the standard form
described above via \eqref{formstar}, \eqref{formstarbeta} where \(
\alpha_{n-1}\beta_{n-1} \) is in Jordan canonical form and the
summands of the quiver corresponding to generalised eigenspaces of the
compositions \( \alpha_i \beta_i \) (and thus to equivalence classes
for~\( \sim \)) are ordered according to the usual ordering on the
minimal elements of the equivalence classes, and the Jordan blocks for
each equivalence class are ordered by size. Then in particular we have
\begin{equation*}
  Q_{[\sim,\cO]}^\circ = K_\C Q_{[\sim,\cO]}^{\circ,JCF}
\end{equation*}
and the nonempty fibres of the complex moment map
\begin{equation*}
  Q_{[\sim,\cO]}^{\circ} \to \lie k _\C \oplus \tf_\C
\end{equation*}
for the action of \( K \times T \) are contained in \( K_\C \times
T_\C \)-orbits (see \cite{DKS2} \S7 and in particular Lemma 7.5).
 
\begin{remark} \label{remqt} We can also identify \(
  Q_{[\sim,\cO]}^{\circ,JCF} \) with an open subset of a hypertoric
  variety by replacing all the \( \xi \) entries in
  \eqref{formstarbeta} with zero (see \cite{DKS2} Lemma 7.13).
  
  More precisely, if a quiver \( \q \) has \( \alpha, \beta \) maps of
  the form given by \eqref{formstar}, \eqref{formstarbeta}, then we
  may obtain a new quiver which still satisfies the complex moment map
  equations by replacing all the \( \xi \) entries in \( \beta \) by
  zero. The resulting maps are denoted by \( \alpha^T, \beta^T \).

  So if \( \q \) is any quiver representing a point in \(
  Q_{[\sim,\cO]}^{\circ,JCF} \) whose Jordan blocks are of the form
  given by \( \alpha_k \) and \( \beta_k \) as above, then we may form
  a new quiver \( \q^T \) from \( \q \) by replacing each such Jordan
  block with the quiver given by \( \alpha_k^T \) and \( \beta_k^T
  \). The new quiver now satisfies the complex moment map equations
  for the action of \( H \), or equivalently for the action of the
  maximal torus \( T_H \) of \( H \).
\end{remark}

\begin{remark}
  \label{remchoice}
  The subgroup of \( K_\C \times T_\C \) preserving the standard form
  must preserve the decomposition of \( \q \) into subquivers given by
  the generalised eigenspaces and hence must lie in \( (K_\sim)_\C
  \times T_\C \).

  Let \( P \) be the parabolic subgroup of \( (K_\sim)_\C \) which is
  the Jacobson--Morozov parabolic of the element of the nilpotent orbit
  \( \cO \) for \( (K_\sim)_\C \) given by the nilpotent component of
  \( X^0 \).  In \cite{DKS2} we identified the group which preserves
  the standard form as \( R_{[\sim,\cO]} \times T_\C \) where \(
  R_{[\sim,\cO]} \) is the centraliser in \( P \) of this nilpotent
  element.  It follows that
  \begin{equation} \label{identity} Q_{[\sim,\cO]}^\circ \cong (K_\C
    \times T_\C) \times_{(R_{[\sim,\cO]} \times T_\C) }
    Q_{[\sim,\cO]}^{\circ,JCF} \cong K_\C \times_{R_{[\sim,\cO]} }
    Q_{[\sim,\cO]}^{\circ,JCF}. \end{equation}
  Moreover \( [P,P] \cap R_{[\sim,\cO]} \) acts trivially on \(
  Q_{[\sim,\cO]}^{\circ,JCF} \). If we define \( T_{[\sim,\cO]} \) to
  be the intersection of \( R_{[\sim,\cO]} \) with the maximal torus
  \( T \), then \( (T_{[\sim,\cO]})_\C /[P,P] \cap (T_{[\sim,\cO]})_\C
  \) acts freely on \( Q_{[\sim,\cO]}^{\circ,JCF} \).
\end{remark}

The situation is summarised in the following theorem which is
\cite{DKS2} Theorem 8.1.

\begin{theorem}
  \label{thm6.8}
  For each equivalence relation \( \sim \) on \( \{1,\ldots,n\} \) and
  nilpotent adjoint orbit \( \cO \) for \( (K_\sim)_\C \), the stratum
  \( Q_{[\sim,\cO]} \) is the union over \( s \in \SU(2) \) of its
  open subsets \( sQ_{[\sim,\cO]}^\circ \), and
  \begin{equation*}
    Q_{[\sim,\cO]}^\circ \cong  K_\C \times_{R_{[\sim,\cO]}  }
    Q_{[\sim,\cO]}^{\circ,JCF}
  \end{equation*}
  where \( R_{[\sim,\cO]} \) is the centraliser in \( (K_\sim)_\C \)
  of the standard representative \( \xi_0 \) in Jordan canonical form
  of the nilpotent orbit \( \cO \) in \( (\lie k _\sim)_\C \), and \(
  Q_{[\sim,\cO]}^{\circ,JCF} \) can be identified with an open subset
  of a hypertoric variety. The image of the restriction
  \begin{equation*}
    Q_{[\sim,\cO]}^{\circ} \to \lie k _\C
  \end{equation*}
  of the complex moment map for the action of \( K \) on \( Q \) is \(
  K_\C( (\tf_\C)_\sim \oplus \cO) \cong K_\C \times_{(K_\sim)_\C} (
  (\tf_\C)_\sim \oplus \cO) \) and its fibres are single \(
  (T_{[\sim,\cO]})_\C \times T_\C \)-orbits, where \(
  (T_{[\sim,\cO]})_\C = T_\C \cap R_{[\sim,\cO]} \) and \(
  (T_{[\sim,\cO]})_\C /[P,P] \cap (T_{[\sim,\cO]})_\C \) acts freely
  on \( Q_{[\sim,\cO]}^{\circ,JCF} \). Here \( P \) is the
  Jacobson--Morozov parabolic of an element of the nilpotent orbit \(
  \cO \) for \( (K_\sim)_\C \), and \( [P,P] \cap (T_{[\sim,\cO]})_\C
  \) acts trivially on \( Q_{[\sim,\cO]}^{\circ,JCF} \).
\end{theorem}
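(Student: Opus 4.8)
The plan is to read the statement off the analysis already carried out in this section and in \cite{DKS2}; the theorem is essentially a compilation, so the work is to assemble the pieces in the right order and to isolate the one genuinely computational input, which I would quote from \cite{DKS2} rather than redo.

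First I would record the two formal reductions. The equality \( Q_{[\sim,\cO]} = \bigcup_{s \in \SU(2)} sQ_{[\sim,\cO]}^\circ \) holds because \( Q_{[\sim,\cO]} \) is by definition the \( \SU(2) \)-sweep of \( Q_{[\sim,\cO]}^\circ \), and each \( sQ_{[\sim,\cO]}^\circ \) is open in \( Q_{[\sim,\cO]} \) since \( Q_{[\sim,\cO]}^\circ \) is cut out of \( Q_{[\sim,\cO]} \) by the open condition that \( \sum_{k=i}^{j-1}\lambda_k^\C \neq 0 \) whenever \( i \not\sim j \) (and these opens cover \( Q_{[\sim,\cO]} \) because a generic choice of complex structure satisfies this condition). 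The isomorphism \( Q_{[\sim,\cO]}^\circ \cong K_\C \times_{R_{[\sim,\cO]}} Q_{[\sim,\cO]}^{\circ,JCF} \) is exactly \eqref{identity} of Remark~\ref{remchoice}: one decomposes a quiver in \( Q_{[\sim,\cO]}^\circ \) into the subquivers cut out by the generalised eigenspaces of the compositions \( \alpha_i\beta_i \), uses \( K_\C \times H_\C \) to bring it to the normal form \eqref{formstar}, \eqref{formstarbeta} with \( \alpha_{n-1}\beta_{n-1} \) in Jordan canonical form, and identifies the residual stabiliser of that form with \( R_{[\sim,\cO]} \times T_\C \), where \( R_{[\sim,\cO]} \) is the centraliser in the Jacobson--Morozov parabolic \( P \) of the nilpotent part \( X^0_n \) of the trace-free part \( X^0 \) of \( X = \alpha_{n-1}\beta_{n-1} \). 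That \( Q_{[\sim,\cO]}^{\circ,JCF} \) is an open subset of a hypertoric variety is Remark~\ref{remqt}: deleting the \( \xi \)-entries of \eqref{formstarbeta} sends such a quiver to one satisfying the complex (hence, since we are in \( Q^\circ \), the full) hyperk\"ahler moment map equations for the maximal torus \( T_H \) of \( H \), and this correspondence is invertible over the relevant open set.

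Next I would identify the image and fibres of the complex moment map for \( K \). By Remark~\ref{contract} the nilpotent part of \( X^0 \) lies in \( \cO \), while by \eqref{eqnkappa} its semisimple part is the diagonal matrix with eigenvalues \( \kappa_1,\dots,\kappa_n \) determined by \( (\lambda_1^\C,\dots,\lambda_{n-1}^\C) \), which lies in \( (\tf_\C)_\sim \) and is central in \( (\lie k_\sim)_\C \). Hence the restriction of the \( K \)-moment map to \( Q_{[\sim,\cO]}^{\circ,JCF} \) takes values in \( (\tf_\C)_\sim \oplus \cO \), all of which are attained; sweeping by \( K_\C \) via \eqref{identity} and using that \( (\tf_\C)_\sim \oplus \cO \) is \( (K_\sim)_\C \)-stable gives the image \( K_\C((\tf_\C)_\sim \oplus \cO) \cong K_\C \times_{(K_\sim)_\C}((\tf_\C)_\sim \oplus \cO) \). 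For the fibres, the \( K \times T \)-moment map is constant on \( K_\C \times T_\C \)-orbits and its nonempty fibres in \( Q_{[\sim,\cO]}^\circ \) are contained in single such orbits (recalled above from \cite{DKS2} Lemma~7.5); since within \( Q_{[\sim,\cO]}^{\circ,JCF} \) the subgroup \( [P,P] \cap (T_{[\sim,\cO]})_\C \) acts trivially while \( (T_{[\sim,\cO]})_\C/[P,P]\cap(T_{[\sim,\cO]})_\C \) acts freely (Remark~\ref{remchoice}), combining with \eqref{identity} and the \( T_\C \)-equivariance of the normal form shows that a nonempty fibre of the \( K \)-moment map on \( Q_{[\sim,\cO]}^\circ \) is a single \( (T_{[\sim,\cO]})_\C \times T_\C \)-orbit.

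The main obstacle --- and the part I would simply cite from \cite{DKS2} (especially its \S7 and Remarks 5.10--5.13) rather than re-prove --- is the normal-form package underlying \eqref{identity} and Remark~\ref{remqt}: that the standard form \eqref{formstar}, \eqref{formstarbeta} is reachable using only \( K_\C \times H_\C \) and gives a closed orbit there, that its stabiliser is precisely \( R_{[\sim,\cO]} \times T_\C \) with the stated triviality and freeness of the torus actions, and that setting the \( \xi \)-entries to zero is a genuine isomorphism of \( Q_{[\sim,\cO]}^{\circ,JCF} \) onto an open subset of a hypertoric variety rather than merely a dominant map. Everything else is bookkeeping with the eigenspace decomposition of the quiver and the identifications of tori set up earlier in this section.
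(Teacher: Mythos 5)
Your proposal is correct and follows essentially the same route as the paper, which presents this theorem as a summary of the preceding discussion (the \( \SU(2) \)-sweep definition of the strata, the identification \eqref{identity} of Remark~\ref{remchoice}, the hypertoric identification of Remark~\ref{remqt}, and the eigenvalue analysis of Remark~\ref{contract} and \eqref{eqnkappa}) and cites \cite{DKS2} Theorem~8.1 and \S7 for the underlying normal-form computations. Your isolation of the genuinely computational input to be quoted from \cite{DKS2} matches exactly what the paper itself defers to that reference.
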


\section{Embeddings}
\label{sec:embeddings}

In this section we will prove that the map \( \sigma \) defined in
\S \ref {sec:towards-an-embedding} is injective and the map \(
\tilde{\sigma} \) defined in \S \ref {sec:towards-an-embedding} is
generically injective.

First consider the restriction of \( \sigma \) to the image \( Q_T \)
in \( Q \) of the hypertoric variety \( M_T \hkq T_H \) under the
natural map \( \iota\colon M_T \hkq T_H \to Q \) (see
Definition~\ref{defn3.10}).

\begin{lemma} \label{hypertoricsigma} The restriction of
  \begin{equation*}
    \sigma\colon Q  \to \mathcal{R} =
    H^0(\PP^1,
    (\cO(2) \otimes (\lie k_\C \oplus  \lie t_\C)) \oplus 
    \bigoplus_{j=1}^{n-1}  \cO(\ell_j) \otimes \wedge^j\C^n)
  \end{equation*}
  to \( Q_T = \iota(M_T \hkq T_H) \) is injective.
\end{lemma}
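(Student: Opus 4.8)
The plan is to make the restriction of $\sigma$ to $Q_T$ completely explicit and then read the quiver data back off it. First I would work on the flat space $M_T$ of quivers as in Definition~\ref{defn3.10}: such a quiver is encoded by the scalars $\nu_i^k,\mu_i^k$, and the point is that for these quivers each map $u\alpha_k+v\beta_k^*$ again has the same ``shift'' shape, with the single nonzero scalar in slot $(i{+}1,i)$ equal to the linear form $L_i^k(u,v)=u\nu_i^k+v\overline{\mu_i^k}$. Hence every composite $(u\alpha_{n-1}+v\beta_{n-1}^*)\cdots(u\alpha_j+v\beta_j^*)$ is again of shift shape, so the component \eqref{eqn((3))} of $\sigma$ sends the quiver to the section $P_j(u,v)\,e_{n-j+1}\wedge\cdots\wedge e_n$ of $\cO(\ell_j)\otimes\wedge^j\C^n$, where $P_j=\prod_{(i,k)\in R_j}L_i^k$ is a product of $\ell_j$ of the linear forms, indexed by the edges $R_j$ of the quiver ``crossing the $j$th cut''. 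Similarly the $\lie k_\C$\nobreakdash-component \eqref{eqn((1))} is diagonal, with known entries, and the $\lie t_\C$\nobreakdash-component \eqref{eqn((2))} is just the hyperk\"ahler moment map value $(\lambda^\C,\lambda^\R)$. So the whole content of $\sigma|_{Q_T}$ is the tuple $(\lambda^\C,\lambda^\R;P_1,\dots,P_{n-1})$.

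Next I would solve the moment map equations for $H$ --- equivalently for its maximal torus, by Remark~\ref{hypertoricT} and Definition~\ref{defn3.10} --- on shift-shaped quivers. These equations are scalar and solve explicitly: the complex ones determine every product $\nu_i^k\mu_i^k$ and the real ones then determine every modulus $|\nu_i^k|$, $|\mu_i^k|$, all in terms of $(\lambda^\C,\lambda^\R)$, which $\sigma$ already records. Consequently each linear form factorises as $L_i^k=\omega_i^k\,\hat L_i^k$, where $\hat L_i^k$ has non-negative real coefficients depending only on $\lambda$ and $\omega_i^k\in\Un(1)$ is the sole remaining freedom --- the phase of $\nu_i^k$ (equivalently of $\mu_i^k$) --- which is precisely the freedom on which the residual torus $T_H$ acts. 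Dividing $P_j$ by the known form $\prod_{(i,k)\in R_j}\hat L_i^k$, one sees that $\sigma|_{Q_T}$ recovers exactly the products $\prod_{(i,k)\in R_j}\omega_i^k$, i.e.\ the ``crossing characters'' $\Theta_j=\sum_{(i,k)\in R_j}\arg\nu_i^k$, $j=1,\dots,n-1$.

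The crux is then the combinatorial assertion that $\Theta_1,\dots,\Theta_{n-1}$ form a $\mathbb{Z}$\nobreakdash-basis of the character lattice of the torus (edge-phases)$/T_H$; granting this, $\sigma|_{Q_T}$ determines the point of $Q_T$ over each fixed $\lambda$ and the lemma follows. I would prove it by identifying the quiver edges with the edges of the complete graph on $\{1,\dots,n\}$, so that $R_j$ becomes the set of edges from $\{1,\dots,j\}$ to $\{j{+}1,\dots,n\}$, and then computing the lattice of $T_H$\nobreakdash-invariant integral combinations of the edge-phases directly: the coboundary relations coming from the constraints $\det g_k=1$ force any invariant combination to have coefficient on the edge $\{i,j\}$ equal to a consecutive partial sum of $n-1$ free parameters $b_1,\dots,b_{n-1}$, which exhibits $\Theta_1,\dots,\Theta_{n-1}$ as a basis. (For small $n$ one can just verify this by hand; e.g.\ for $n=2$ the single form $P_1$ already returns the quiver, and for $n=3$ the two forms $P_1,P_2$ do.)

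The main obstacle is the degenerate locus where some $\nu_i^k$ and $\mu_i^k$ vanish together --- equivalently, by the solved moment map equations, where $\lambda$ lies on a root plane in $\R^3$, so that some partial sum $\sum_k\lambda_k$ vanishes in $\R^3$. There the corresponding $P_j$ vanish identically and several $\Theta_j$ are lost, while the torus acting effectively on the surviving phases is correspondingly smaller, and one must check that the surviving $\Theta_j$ still generate the (smaller) invariant lattice. To handle this I would use that $\sigma$ determines $\lambda$ and hence, by Remark~\ref{detstratum}, the stratum $Q_{[\sim,\cO]}$ containing the point, together with the description of that stratum in Theorem~\ref{thm6.8}: restricted to $Q_T$ the point lies in the hypertoric ``Jordan-form'' piece, so the quiver splits into the blocks prescribed by $\sim$, and applying the analysis of the first three paragraphs to each block --- an induction on $n$ --- completes the proof.
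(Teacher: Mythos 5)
Your strategy is essentially sound and lands on the same two pillars as the paper's argument, but the execution is genuinely different. The paper proves Lemma~\ref{hypertoricsigma} by deducing it from Lemma~\ref{sigmat}: it first recovers \( \lambda \) from the \( \lie t_\C \)-component (as you do), then invokes the hypertoric structure of \( M_T\hkq T_H \) to say that the fibres of the \( T \)-moment map are single \( T \)-orbits, thereby reducing everything to the containment \( \Stab_T(\sigma_T(\q)) \subseteq \Stab_T(\q) = T\cap [K_\lambda,K_\lambda] \); the stabiliser of \( \q \) is identified group-theoretically via Theorem~\ref{thm6.8}, and the containment is checked by observing that \( t \) represented by \( (A_1,\dots,A_{n-1}) \) scales the \( j \)-th product by \( \prod_{k\geqslant j}\det A_k \) and that this product is non-zero exactly when \( j+1 \) is minimal in its \( \sim \)-class. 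You instead re-derive both pillars by hand: you solve the \( T_H \)-moment map equations explicitly to see that \( \lambda \) determines all products \( \nu_i^k\mu_i^k \) and all moduli, so the residual freedom is the torus of phases modulo \( T_H \), and you then prove that the crossing characters \( \Theta_1,\dots,\Theta_{n-1} \) form a \( \mathbb Z \)-basis of the invariant lattice. This lattice statement is correct (your \enquote{consecutive partial sum} description of the invariants is exactly what the coboundary computation gives, and the \( \Theta_j \) correspond to the standard basis of the free parameters), and it is precisely the dual formulation of the paper's stabiliser containment; your route is more self-contained and computational, the paper's is shorter because it borrows the hypertoric fact and the structure theory of the strata. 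One small indexing point to fix: for the quivers of Definition~\ref{defn3.10} the composite \( \alpha_{n-1}\cdots\alpha_j \) picks out, at each level \( k\geqslant j \), the \emph{last} \( j \) scalars \( \nu_{k-j+1}^k,\dots,\nu_k^k \), and your cut-sets \( R_j \) must be taken accordingly for the basis claim to hold.

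The one place where your sketch does not quite work as stated is the degenerate locus, which is also where the real content lies. For a quiver in \( M_T \) the edge \( (i,k) \) dies (both \( \nu_i^k \) and \( \mu_i^k \) vanish) exactly when \( k-i+1\sim k+1 \); consequently the subquivers attached to the equivalence classes of \( \sim \) are \emph{identically zero}, so \enquote{applying the analysis of the first three paragraphs to each block} yields nothing and there is no genuine induction on \( n \) to run. What is actually needed is to redo your lattice computation with the dead edges deleted: the surviving \( \Theta_j \) are those with \( j+1 \) minimal in its \( \sim \)-class, and one must check these still generate the lattice of invariant combinations of the surviving phases (equivalently, that their common kernel in \( T \) is exactly \( T\cap[K_\lambda,K_\lambda] \), which is genuinely the stabiliser of \( \q \) and so does no harm). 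This check succeeds, but it is a restriction of the same computation rather than a block decomposition; alternatively you can simply import the paper's identification of the stabiliser from Theorem~\ref{thm6.8} and Remark~\ref{kcbyn}, together with the hypertoric fact that the fibre over \( \lambda \) is a single \( T \)-orbit even at degenerate \( \lambda \) (a point your explicit approach would otherwise have to justify, since at non-stable quivers the quotient identifications are not obviously plain \( T_H \)-orbit identifications).
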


This follows immediately from

\begin{lemma} \label{sigmat} The restriction to \( Q_T = \iota(M_T
  \hkq T_H) \) of the projection
  \begin{equation*}
    \sigma_T\colon Q \to 
    H^0(\PP^1, (\cO(2)
    \otimes \lie t_\C) \oplus 
    \bigoplus_{j=1}^{n-1}  \cO(\ell_j) \otimes \wedge^j\C^n)
  \end{equation*}
  of \( \sigma \) is injective.
\end{lemma}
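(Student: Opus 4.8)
\emph{Proof plan.}
The plan is to work entirely with the ``bidiagonal'' quivers of Definition~\ref{defn3.10}, whose \( T_H \)-orbits represent the points of \( Q_T=\iota(M_T\hkq T_H) \), and to reconstruct such a quiver (up to \( T_H \), hence the point of \( Q_T \)) from the value of \( \sigma_T \). Write the quiver data as \( (\nu_i^k,\mu_i^k)\in\C^2 \) for \( 1\leqslant i\leqslant k\leqslant n-1 \). Since \( \beta_k^* \) has exactly the shape of \( \alpha_k \) with \( \bar\mu_i^k \) in place of \( \nu_i^k \), the map \( u\alpha_k+v\beta_k^* \) has the same shape with \( (i+1,i) \)-entry \( u\nu_i^k+v\bar\mu_i^k \); composing and taking exterior powers, the \( \cO(\ell_j)\otimes\wedge^j\C^n \)-component of \( \sigma_T \) is
\begin{equation*}
  (u,v)\longmapsto P_j(u,v)\,(e_{n-j+1}\wedge\dots\wedge e_n),\qquad
  P_j(u,v)=\prod_{(i,k)\in S_j}(u\nu_i^k+v\bar\mu_i^k),
\end{equation*}
where \( S_j=\{(i,k):j\leqslant k\leqslant n-1,\ k-j+1\leqslant i\leqslant k\} \) has \( \ell_j=j(n-j) \) elements; thus the \( j \)-th component of \( \sigma_T \) on \( Q_T \) is this binary form of degree \( \ell_j \).

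Next I would read the ``quaternionic norms'' off the \( \cO(2)\otimes\lie t_\C \)-component, which records the value \( (\lambda_1,\dots,\lambda_{n-1})\in(\R^3)^{n-1} \) of the hyperk\"ahler moment map for \( T \). For the bidiagonal quivers the moment-map equations for \( H \) become diagonal, and telescoping them as in the computation leading to \eqref{eqnkappa} gives \( \nu_i^k\mu_i^k=-\sum_{l=k-i+1}^k\lambda_l^\C \) and \( |\nu_i^k|^2-|\mu_i^k|^2=-\sum_{l=k-i+1}^k\lambda_l^\R \). The right-hand sides are precisely the value at \( (\nu_i^k,\mu_i^k) \) of the hyperk\"ahler moment map for the standard circle action on \( \HH=\C^2 \), so they determine each pair up to the phase \( (\nu_i^k,\mu_i^k)\mapsto(e^{\ii\theta_{i,k}}\nu_i^k,e^{-\ii\theta_{i,k}}\mu_i^k) \) --- with no ambiguity at all when the pair vanishes, which by the displayed identities happens exactly when \( \sum_{l=k-i+1}^k\lambda_l=0 \) in \( \R^3 \).

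It remains to pin down the phases modulo the \( T_H \)-action. Under a phase change each linear factor \( u\nu_i^k+v\bar\mu_i^k \) is multiplied by \( e^{\ii\theta_{i,k}} \), so \( P_j \) picks up the factor \( \exp\bigl(\ii\sum_{(i,k)\in S_j}\theta_{i,k}\bigr) \); hence, when the relevant \( P_j \) do not vanish identically, they recover the values on the residual phase torus of the characters \( \rho_j=\sum_{(i,k)\in S_j}\theta_{i,k} \), and a short telescoping (or the fact noted in the introduction that \( T \) acts on \( \cO(\ell_j)\otimes\wedge^j\C^n \) through the highest weight of \( \wedge^j\C^n \)) identifies \( \rho_1,\dots,\rho_{n-1} \) with a basis of the character lattice of \( T=\tH/H \) complementary to the characters coming from \( T_H \). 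This recovers the phases modulo \( T_H \), hence the \( T_H \)-orbit of the quiver, so \( \sigma_T \) is injective on the hyperk\"ahler-stable locus \( Q_T^{\hks} \) (where \( \iota \) is an embedding and all pairs are non-zero, so all \( P_j\not\equiv0 \)).

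The main obstacle --- and the step that really needs the stratified description of \S\ref{sec:strat-univ-hyperk}, in particular Theorem~\ref{thm6.8} --- is the boundary: at a point of \( Q_T \) with some vanishing pair the corresponding forms \( P_j \) vanish identically and carry no phase information, so one must show that the vanishing of \( \sum_{l=k-i+1}^k\lambda_l \) forced by this enlarges the \( T \)-stabiliser of the point in \( Q \) in exactly such a way that the characters \( \rho_j \) lost from the identically zero \( P_j \) become redundant there, so that \( (\lambda_\bullet,P_\bullet) \) still separates these points; this is where one uses that the fibres of the \( K\times T \) complex moment map on each \( Q_{[\sim,\cO]}^\circ \) are single \( (T_{[\sim,\cO]})_\C\times T_\C \)-orbits with \( [P,P]\cap(T_{[\sim,\cO]})_\C \) acting trivially.
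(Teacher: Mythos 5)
Your plan follows the same route as the paper's own proof: the \( \cO(2)\otimes\lie t_\C \) component recovers \( (\lambda_1,\dots,\lambda_{n-1}) \); since \( M_T\hkq T_H \) is hypertoric the fibres of that moment map are single \( T \)-orbits, so everything reduces to pinning down the residual phases of the pairs \( (\nu_i^k,\mu_i^k) \) modulo \( T_H \); and the products \( P_j \) do exactly that whenever they are not identically zero. Your formula for \( P_j \), the index set \( S_j \), and the identities \( \nu_i^k\mu_i^k=-\sum_{l=k-i+1}^k\lambda_l^\C \) and \( \abs{\nu_i^k}^2-\abs{\mu_i^k}^2=-\sum_{l=k-i+1}^k\lambda_l^\R \) are all correct for the conventions of Definition~\ref{defn3.10}, and the hyperk\"ahler-stable case is complete.

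The gap is exactly where you flag it, and it is genuine: the sentence beginning ``one must show'' is the whole content of the lemma at the degenerate points, and the statement you propose to verify there needs real care. What has to be proved is that the common kernel in \( T \) of the characters attached to those \( j \) with \( P_j\not\equiv0 \) is contained in the \( T \)-stabiliser of the point, which by Theorem~\ref{thm6.8} (with \( \cO=\{0\} \)) is \( T\cap[K_\lambda,K_\lambda] \). Now \( P_j\equiv0 \) exactly when some \( (i,k)\in S_j \) has \( \sum_{l=k-i+1}^k\lambda_l=0 \), i.e.\ exactly when some \( a\leqslant j \) is \( \sim \)-equivalent to some \( b>j \); so \( P_j\not\equiv0 \) iff \( \{1,\dots,j\} \) is a union of \( \sim \)-classes. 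This agrees with the criterion used in the paper's proof (``\( j+1 \) is the smallest element of its class'') only when the classes are intervals, and the classes need not be intervals: for \( n=3 \) take \( \lambda_1=-\lambda_2\neq0 \), so the classes are \( \{1,3\} \) and \( \{2\} \). Then the pair \( (2,2) \) vanishes and lies in both \( S_1 \) and \( S_2 \), so \( P_1\equiv P_2\equiv 0 \) and no phase information survives, while \( T\cap[K_\lambda,K_\lambda] \) is only a one-dimensional subtorus of \( T \), so the \( T \)-orbit through such a point is positive-dimensional. Your ``redundancy'' claim therefore does not follow from generalities, and you must either verify the inclusion of kernels directly in these non-interval configurations or identify what else in \( \sigma_T \) separates the points of that orbit. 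As written, the plan does not close precisely the case that distinguishes the lemma from its easy generic version.
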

\begin{proof}
  By Remark~\ref{remsigmatilde} we can recover the value of the
  hyperk\"ahler moment map for the action of \( T \) on \( M_T\hkq T_H
  \) at any point from its image under \( \sigma_T \circ \iota \).
  Since \( M_T\hkq T_H \) is hypertoric the fibres of this
  hyperk\"ahler moment map are \( T \)-orbits, so it suffices to show
  that \( \sigma_T \) is injective on \( T \)-orbits in \( Q_T \).

  Recall that for any \( t \in T \) the action on \( Q_T \) of \(
  (t,1) \in K \times T \) is the same as the action of \( (1,t) \in K
  \times T \). If \( \q \in Q_T \) then \( \q \in s
  Q^\circ_{[\sim,\cO]} \) for some \( s \in \SU(2) \) and stratum \(
  Q^\circ_{[\sim,\cO]} \) with the nilpotent orbit \( \cO \) equal to
  the zero orbit \( \{0\} \) in \( \lie k_\C^* \). Hence in the
  notation of Theorem~\ref{thm6.8} we have
  \begin{equation*}
    R_{[\sim, \cO]} = P = (K_\sim)_\C \quad\text{and}\quad T_{[\sim, \cO]} = T,
  \end{equation*}
  and thus the stabiliser of \( \q \) in \( T \) is
  \begin{equation*}
    T \cap [P,P] = T \cap [K_\sim,K_\sim] = T \cap [K_\lambda,K_\lambda]
  \end{equation*}
  where \( \lambda \) is the image of \( \q \) under the hyperk\"ahler
  moment map for the action of \( T \).  Both \( \sigma_T \) and \(
  \iota \) are \( T \)-equivariant, so it is enough to show that the
  stabiliser in \( T \) of \( \sigma_T(\q) \) is contained in \( T
  \cap [K_\lambda,K_\lambda] \). But it follows from Remark~\ref{kcbyn} that
  the stabiliser of \( \sigma_T(\q) \) in \( K \) is contained in
  \begin{equation*}
    K_\lambda \cap (K \cap [P_\lambda, P_\lambda]) = [K_\lambda, K_\lambda],
  \end{equation*}
  where \( P_\lambda \) is the standard parabolic in \( K_\C \) whose Levi
  subgroup is \( (K_\lambda)_\C \) and whose intersection with \( K \)
  is \( K_\lambda \), so the result follows.  In more detail, consider
  a quiver \( \q \) in \( M_T \) given as in Definition~\ref{defn3.10} by
  \begin{equation*}
    \alpha_k = \begin{pmatrix}
      0 & \cdots  & 0 & 0\\
      \nu_1^k &  0 & \cdots & 0\\
      0 & \nu_2^k  & \cdots & 0\\
      & & \cdots  & \\
      0 & \cdots  & 0 & \nu_k^k
    \end{pmatrix}
  \end{equation*}
  and
  \begin{equation*}
    \beta_k = \begin{pmatrix}
      0 & \mu_1^k & 0   & \cdots & 0 \\
      0 & 0 & \mu_2^k  &  \cdots & 0 \\
      & & \cdots  & &  \\
      0 & 0 & \cdots  & 0 & \mu_k^k   \end{pmatrix}
  \end{equation*}
  for some \( \nu^k_i, \mu_i^k \in \C \). For every \( (u,v) \in \C^2
  \) and every \( j \in \{1, \ldots, n-1 \} \) its image under the
  composition \( \sigma_T \circ \iota \) determines the element
  \begin{equation*}
    \wedge^j( u\alpha_{n-1}+ v \beta^*_{n-1})(u \alpha_{n-2}+ v
    \beta_{n-2}^*) \cdots (u\alpha_j 
    + v\beta_j^*) \in \wedge^j \C^n,
  \end{equation*}
  where
  \begin{equation*}
    u\alpha_k + v \beta_k^* = \begin{pmatrix}
      0 & \cdots & 0  & 0 \\
      u\nu_1^k + v \bar{\mu}_1^k &  0 & \cdots & 0\\
      0 & u\nu_2^k + v \bar{\mu}_2^k  & \cdots & 0\\
      & & \cdots  & \\
      0 & \cdots  & 0 & u\nu_k^k + v \bar{\mu}_k^k
    \end{pmatrix},
  \end{equation*}
  and thus determines the product
  \begin{equation*}
    \prod_{k=j}^{n-1} \prod_{i=1}^j (u\nu_i^k + v \bar{\mu}_i^k).
  \end{equation*}
  The action of an element \( t \) of \( T \cong (S^1)^{n-1} \cong
  \prod_{j=1}^{n-1} \Un(j)/\SU(j) \) represented by matrices \( A_j
  \in \Un(j) \) for \( j=1, \ldots, n-1 \) multiplies this product by
  \begin{equation*}
    \prod_{k=j}^{n-1} \det A_k.
  \end{equation*}
  The contracted quivers associated as in Remark~\ref{contract} to a quiver
  of this form are all identically zero. Thus this product is non-zero
  (and hence \( \prod_{k=j}^{n-1} \det A_k = 1 \) if \( t \)
  stabilises \( \sigma_T(\q) \)) precisely when \( j+1 \) is the
  smallest element of its equivalence class under \( \sim \). This
  tells us that the stabiliser in \( T \) of \( \sigma_T(\q) \) is
  contained in \( [K_\lambda, K_\lambda] \) as required.
\end{proof}

\begin{remark}
  \label{remarkst}
  We note that on the subset \( Q_{[\sim,\cO]}^{\circ, JCF} \) embedded in
  the hypertoric variety as in Remark~\ref{remqt}, the fibres of the complex
  moment map for the complex structure associated to \( [1:0] \) are
  \( T_\C \)-orbits. Moreover a straightforward modification of the above
  proof shows that the composition \( (\sigma_T)_{[1:0]} \) of \(
  \sigma_T \) with evaluation at \( [1:0] \in \PP^1 \) is injective on
  \( T_\C \)-orbits in \( Q_{[\sim,\cO]}^{\circ, JCF} \), and thus that
  \( (\sigma_T)_{[1:0]} \) is injective on \( Q_{[\sim,\cO]}^{\circ, JCF} \).
\end{remark}

We now turn to showing injectivity for the map \( \sigma \) on the
full implosion \( Q \).

By Remark~\ref{detstratum} it suffices to show that the restriction of \(
\sigma \) to any stratum \( Q_{[\sim, \cO]} \) is injective.  Indeed,
we need only show that its restriction to \( Q_{[\sim,\cO]}^\circ \)
is injective, since if two points lie in \( Q_{[\sim, \cO]} \) then
there is some \( s \) in \( SU(2) \) such that they both lie in \(
sQ_{[\sim, \cO]}^\circ \), and \( \sigma \) is by construction \(
SU(2) \)-equivariant. We will do this by showing that the restriction
to \( Q_{[\sim,\cO]}^\circ \) of the composition \( \sigma_{[1:0]} \)
of \( \sigma \) with evaluation at \( [1:0] \in \PP^1 \) is injective,
and this will also show that \( \tilde{\sigma} \) is generically
injective.

\begin{proposition}
  \label{proplast}
  For any equivalence relation \( \sim \) on \( \{1,\ldots,n\} \) and
  nilpotent coadjoint orbit \( \cO \) in \( \lie k_\C^* \), the
  restriction
  \begin{equation*}
    \sigma_{[1:0]} \colon Q_{[\sim,\cO]}^\circ \to (\lie k_\C \oplus
    \tf_\C) \otimes \cO(2)_{[1:0]} 
    \oplus \bigoplus_{j=1}^{n-1} \wedge^j(\C^n)  \otimes \cO(\ell_j)_{[1:0]}
  \end{equation*}
  to \( Q_{[\sim,\cO]}^\circ \) of the composition \( \sigma_{[1:0]}
  \) of \( \sigma \) with evaluation at \( [1:0] \in \PP^1 \) is
  injective.
\end{proposition}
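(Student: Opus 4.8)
The plan is to imitate the proof of Lemma~\ref{sigmat}, with the hypertoric structure of \( Q_T \) replaced by the description of \( Q_{[\sim,\cO]}^\circ \) supplied by Theorem~\ref{thm6.8}. By Remark~\ref{remsigmatilde} the value \( \sigma_{[1:0]}(\q) \) determines the values at \( \q \) of the complex moment maps for the \( K \)- and \( T \)-actions on \( Q \); since by Theorem~\ref{thm6.8} the fibres of the complex moment map \( Q_{[\sim,\cO]}^\circ \to \lie k_\C \) for the \( K \)-action are single \( (T_{[\sim,\cO]})_\C \times T_\C \)-orbits, two points of \( Q_{[\sim,\cO]}^\circ \) with the same image under \( \sigma_{[1:0]} \) lie in one \( (T_{[\sim,\cO]})_\C \times T_\C \)-orbit, and it suffices to prove that \( \sigma_{[1:0]} \) separates points of each such orbit.

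The next step is to reduce to the slice \( Q_{[\sim,\cO]}^{\circ,JCF} \), using that \( \sigma_{[1:0]} \) is a morphism of affine varieties equivariant for the complexified actions of \( K_\C \) (adjoint on \( \lie k_\C \), trivial on \( \lie t_\C \), standard on each \( \wedge^j\C^n \)) and of \( T_\C \) (trivial on \( \lie k_\C \oplus \lie t_\C \), by the appropriate character on each \( \wedge^j\C^n \)). Writing a point of \( Q_{[\sim,\cO]}^\circ \cong K_\C \times_{R_{[\sim,\cO]}} Q_{[\sim,\cO]}^{\circ,JCF} \) as \( [g,\q_0] \) with \( g \in K_\C \) and \( \q_0 \in Q_{[\sim,\cO]}^{\circ,JCF} \), the \( \lie t_\C \)- and \( \lie k_\C \)-components of the identity \( \sigma_{[1:0]}([g_1,\q_1]) = \sigma_{[1:0]}([g_2,\q_2]) \) force first \( \q_1 \) and \( \q_2 \) to have the same \( (\lambda_1^\C,\dots,\lambda_{n-1}^\C) \), hence the same value of \( \alpha_{n-1}\beta_{n-1} \) in the chosen Jordan canonical form, and then \( g_2^{-1}g_1 \) to centralise this Jordan-form element and so to lie in \( R_{[\sim,\cO]} \). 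Hence \( [g_1,\q_1] = [g_2,(g_2^{-1}g_1)\q_1] \), so after renaming we may take \( g_1 = g_2 \); peeling off this common \( K_\C \)-element reduces the claim to the injectivity of \( \sigma_{[1:0]} \) on \( Q_{[\sim,\cO]}^{\circ,JCF} \).

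On \( Q_{[\sim,\cO]}^{\circ,JCF} \) the map \( \sigma_{[1:0]} \) determines \( (\lambda_1^\C,\dots,\lambda_{n-1}^\C) \), so one needs only injectivity on the locus of fixed \( \lambda^\C \); on that locus the complex moment map for \( K \) is constant, so by Theorem~\ref{thm6.8} it is a single orbit of \( (T_{[\sim,\cO]})_\C \times T_\C \), all of whose points have the same stabiliser \( N_0 \) (the kernel of the action on \( Q_{[\sim,\cO]}^{\circ,JCF} \)), and by equivariance it is enough to show that \( N_0 \) is also the full stabiliser of \( \sigma_{[1:0]}(\q) \) in \( (T_{[\sim,\cO]})_\C \times T_\C \). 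Any element of this stabiliser automatically fixes the \( \lie k_\C \)- and \( \lie t_\C \)-components, so the whole content is in the \( E = \bigoplus_{j=1}^{n-1} \wedge^j\C^n \)-component, whose value at \( [1:0] \) is, up to non-zero scalars, \( \sum_{j=1}^{n-1} \wedge^j(\alpha_{n-1}\cdots\alpha_j) \); by the remark preceding the definition of \( Q_{[\sim,\cO]}^{\circ,JCF} \) this equals \( \sum_j d_j\, e_1 \wedge \cdots \wedge e_j \) for scalars \( d_j \), and, just as in the explicit computation in the proof of Lemma~\ref{sigmat} transported to the associated hypertoric quiver \( \q^T \) of Remark~\ref{remqt} (which has the same \( \alpha \)-maps as \( \q \), cf.\ Remark~\ref{remarkst}), the index \( j \) has \( d_j \neq 0 \) precisely when \( j+1 \) is smallest in its \( \sim \)-class subject to the Jordan-block condition imposed by \( \cO \); an element of \( (T_{[\sim,\cO]})_\C \times T_\C \) fixing all these non-zero \( d_j \) must then act trivially on the hypertoric coordinates of \( \q^T \), hence on \( \q \).

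I expect this last step to be the main obstacle: one must pin down exactly which of the exterior-power components \( \wedge^j(\alpha_{n-1}\cdots\alpha_j) \) are non-zero along the orbit and verify that the characters through which \( (T_{[\sim,\cO]})_\C \times T_\C \) acts on those components cut out precisely \( N_0 \). This is where the combinatorics of \( \sim \) and the Jordan type of \( \cO \) are combined, and the cleanest route is to transport the question to the hypertoric variety via \( \q \mapsto \q^T \) and re-use (a refinement of) the computation already performed in the proof of Lemma~\ref{sigmat} and in Remark~\ref{remarkst}.
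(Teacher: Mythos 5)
Your proposal is correct and follows essentially the same route as the paper's proof: it reduces to the slice \( Q_{[\sim,\cO]}^{\circ,JCF} \) via \( K_\C \)-equivariance together with the observation that the \( \lie k_\C \oplus \lie t_\C \)-components force any \( g \in K_\C \) carrying \( \sigma_{[1:0]}(Q_{[\sim,\cO]}^{\circ,JCF}) \) into itself to lie in \( R_{[\sim,\cO]} \), and then handles the slice by passing to the associated hypertoric quiver \( \q^T \) and reusing the computation of Lemma~\ref{sigmat} and Remark~\ref{remarkst}. The final step you flag as the main obstacle is exactly the point the paper also delegates to Remark~\ref{remarkst} (via the identity \( (\sigma_T)_{[1:0]}(\q^T) = (\sigma_T)_{[1:0]}(\q) \)), so there is no substantive divergence.
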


\begin{proof}
  \begin{equation*}
    \sigma_{[1:0]} \colon Q_{[\sim,\cO]}^\circ \to (\lie k_\C \oplus
    \lie t_\C) \otimes \cO(2)_{[1:0]} 
    \oplus \bigoplus_{j=1}^{n-1} \wedge^j(\C^n)  \otimes \cO(\ell_j)_{[1:0]}
  \end{equation*}
  is holomorphic and \( K \times T \)-equivariant, so it is \( K_\C
  \times T_\C \)-equivariant. By Theorem~\ref{thm6.8}
  \begin{equation*}
    Q_{[\sim,\cO]}^\circ \cong  K_\C \times_{R_{[\sim,\cO]}  }
    Q_{[\sim,\cO]}^{\circ,JCF}
  \end{equation*}
  where \( R_{[\sim,\cO]} \) is the centraliser in \( (K_\sim)_\C \)
  of the standard representative \( \xi_0 \) in Jordan canonical form
  of the nilpotent orbit \( \cO \) in \( (\lie k _\sim)_\C \).  It
  follows immediately from the definition of \(
  Q_{[\sim,\cO]}^{\circ,JCF} \) by looking at the projection of \(
  \sigma_{[1:0]} \) to \( \lie k_\C \otimes \cO(2)_{[1:0]} \) that if
  \( g \in K_\C \) and \( \q \in Q_{[\sim,\cO]}^{\circ,JCF} \) and
  \begin{equation}\label{botp30} g \sigma_{[1:0]} (\q) \in
    \sigma_{[1:0]} ( Q_{[\sim,\cO]}^{\circ,JCF})
  \end{equation}
  then \( g \in R_{[\sim,\cO]} \).

  Proposition~\ref{proplast} now follows from the following lemma.

  \begin{lemma} \label{lem6.4} The restriction of \( \sigma_{[1:0]} \)
    to \( Q_{[\sim,\cO]}^{\circ,JCF} \) is injective.
  \end{lemma}
  \begin{proof} As in Theorem~\ref{thm6.8}, \( Q_{[\sim,\cO]}^{\circ,JCF} \)
    can be identified with an open subset of a hypertoric variety by
    associating to a quiver \( \q \in Q_{[\sim,\cO]}^{\circ,JCF} \)
    with blocks of the form \eqref{formstar} and \eqref{formstarbeta}
    a quiver \( \q^T \) where each \( \xi^{ijk}_\ell \) in \( \q \) is
    replaced with zero (cf.\ Remark~\ref{remqt}). This hypertoric variety is
    (up to the action of the Weyl group of \( H \times K \)) a
    subvariety of \( Q_T \). Moreover the projection \(
    (\sigma_T)_{[1:0]} \) of \( \sigma_{[1:0]} \) to
    \begin{equation*}
      \lie t_\C \otimes \cO(2)_{[1:0]}
      \oplus \bigoplus_{j=1}^{n-1} \wedge^j(\C^n)  \otimes \cO(\ell_j)_{[1:0]}
    \end{equation*}
    satisfies \( (\sigma_T)_{[1:0]}(\q^T) = (\sigma_T)_{[1:0]}(\q) \),
    so the result follows immediately from Lemma~\ref{sigmat} and
    Remark~\ref{remarkst}.
  \end{proof}

  \sloppy
  To complete the proof of Proposition~\ref{proplast}, suppose that \(
  \sigma_{[1:0]}(g_1 \q_1) = \sigma_{[1:0]}(g_2 \q_2) \) where \( g_1
  \) and \( g_2 \) are elements of \( K_\C \) and \( \q_1 \) and \(
  \q_2 \) are elements of \( Q^{\circ, JCF}_{\sim, \cO} \).  As \(
  \sigma_{[1:0]} \) is equivariant, we have \( r = g_1^{-1} g_2 \in
  R_{[\sim, \cO]} \) as at \eqref{botp30}, and hence \(
  \sigma_{[1:0]}(g_1 \q_1) = \sigma_{[1:0]}(g_1 r \q_2) \).  Hence by
  equivariance \( \sigma_{[1:0]}(\q_1) = \sigma_{[1:0]}(r \q_2) \)
  where both \( r \q_2 \) and \( \q_1 \) lie in \(
  Q_{[\sim,\cO]}^{\circ,JCF} \), so Lemma~\ref{lem6.4} shows that \( \q_1 =
  r \q_2 \) and thus that \( g_1 \q_1 = g_2 \q_2 \) as desired.
\end{proof}

This completes the proof of Theorem~\ref{thmsigma}. Moreover since the map
\begin{equation*}
  \tilde{\sigma} \colon \twist_Q \to 
  \cO(2) \otimes (\lie k_\C \oplus  \lie t_\C) \oplus 
  \bigoplus_{j=1}^{n-1}  \cO(\ell_j) \otimes \wedge^j\C^n
\end{equation*}
is compatible with the projections to \( \PP^1 \) and is \( \SU(2)
\)-equivariant, it follows immediately from Proposition~\ref{proplast} that \(
\tilde{\sigma} \) is injective on the dense subset of \( \twist_Q =
\PP^1 \times Q \) which is the union over all \( (\sim, \cO) \) of the
\( \SU(2) \)-sweep of \( \{[1:0]\} \times Q_{[\sim,\cO]}^{\circ} \).
In particular \( \tilde{\sigma} \) is injective on the dense
Zariski-open subset of \( \twist_Q = \PP^1 \times Q \) which is the \(
\SU(2) \)-sweep of \( \{[1:0]\} \times Q_{[\sim,\cO]}^{\circ} \) where
\( \sim \) and \( \cO \) are such that \( i \sim j \) if and only if
\( i=j \) and \( \cO = \{ 0 \} \).  Therefore the proof of
Theorem~\ref{thmsigmatilde} is also complete.

\begin{remark}
  \label{rem6.last}
  It follows from \cite{DKS2} Remark 3.4 and \S7 that the image of \(
  \sigma\colon Q \to \mathcal{R} \) is the closure in \( \mathcal{R}
  \) of the \( K_\C \)-sweep of the image \( \sigma(Q_T) \) in \(
  \mathcal{R} \) of the hypertoric variety \( M_T\hkq T_H \)
  associated to the hyperplane arrangement in \( \lie{t} \) given by
  the root planes (see Definition~\ref{defn3.10} above). Here \( \sigma(Q_T) =
  \sigma(\iota(M_T \hkq T_H)) \) where the composition \( \sigma \circ
  \iota\colon M_T \hkq T_H \to \mathcal{R} \) takes a quiver of the form
  \begin{equation*}
    \alpha_k = \begin{pmatrix}
      0 & \cdots &  0 & 0 \\
      \nu_1^k & 0  & \cdots & 0\\
      0 & \nu_2^k  & \cdots & 0\\
      & & \cdots  & \\
      0 & \cdots  & 0 & \nu_k^k
    \end{pmatrix} \end{equation*} and
  \begin{equation*}
    \beta_k = \begin{pmatrix}
      0 & \mu_1^k & 0  &  \cdots & 0\\
      0 & 0 & \mu_2^k  &  \cdots & 0\\
      & & \cdots &  &  \\
      0 & 0 & \cdots & 0  & \mu_k^k   \end{pmatrix}
  \end{equation*}
  to the section \( \rho_K + \rho_T + \sum_{j=1}^{n-1} \rho_j \) of
  \( \cO(2) \otimes (\lie k_\C \oplus \lie t_\C) \oplus
  \bigoplus_{j=1}^{n-1} \cO(\ell_j) \otimes \wedge^j\C^n \) where
  \begin{multline*}
    \rho_K(u,v) = \rho_T(u,v) = \left( (u\alpha_{n-1} + v
      \beta^*_{n-1})(-v\alpha^*_{n-1} + u \beta_{n-1}) \right)_0 \\
    =
    \begin{pmatrix}
      0 &   0 & \cdots   & 0 \\
      0 &
      \begin{aligned}
        &(u\nu_1^{n-1} + v \bar{\mu}_1^{n-1})\times{}\\
        &\quad(-v \bar{\nu}^{n-1}_1 + u
        \mu_1^{n-1})
      \end{aligned}
      & \cdots & 0\\
      &   &  \cdots  & \\
      0 & 0 & \cdots &
      \begin{aligned}
        &(u\nu_{n-1}^{n-1} + v \bar{\mu}_{n-1}^{n-1})\times{} \\
        &\quad(-v \bar{\nu}^{n-1}_{n-1} + u \mu_{n-1}^{n-1})
      \end{aligned}
    \end{pmatrix}\\
    - \sum_{i=1}^{n-1} \frac{(u\nu_i^{n-1} + v \bar{\mu}_i^{n-1})(-v
    \bar{\nu}^{n-1}_i + u \mu_i^{n-1})}{n} I_n
  \end{multline*}
  and
  \begin{equation*}
    \rho_j(u,v) = \prod_{k=j}^{n-1} \prod_{i=1}^j
    (u\nu_i^k + v \bar{\mu}_i^k) e_{j+1} \wedge \cdots \wedge e_n
  \end{equation*}
  where \( e_1, \ldots, e_n \) form the standard basis for \( \C^n \).

  For any \( p \in \PP^1 \) the projection to
  \begin{equation*}
    \bigoplus_{j=1}^{n-1} \cO(\ell_j)_p \otimes \wedge^j \C^n \cong
    \bigoplus_{j=1}^{n-1} \wedge^j \C^n 
  \end{equation*}
  of the evaluation \( \sigma_p \) of \( \sigma \) at \( p \) takes \( Q_T \) to the
  toric variety \( \overline{T_\C v} \) where \( v \in \bigoplus_{j=1}^{n-1}
  \wedge^j \C^n \) is the sum \( v = \sum_{j=1}^{n-1} v_j \) of highest
  weight vectors \( v_j \in \wedge^j \C^n \). Moreover if \( B = T_\C N \) is
  the standard Borel subgroup of \( K_\C = \SL(n,\C) \) and \( N=[B,B] \) is
  its unipotent radical which fixes the highest weight vectors \( v_j \),
  then this projection also takes the closure \( \overline{BQ_T} \) of the
  \( B \)-sweep of \( Q_T \) to \( \overline{Bv} = \overline{T_\C v} \). Thus this
  projection of \( \sigma_p \) takes \( Q= \overline{K_\C Q_T} = K
  \overline{BQ_T} \) to the universal symplectic implosion
  \begin{equation*}
    K \overline{T_\C v} = \overline{K_\C v} \subseteq
    \bigoplus_{j=1}^{n-1} \wedge^j \C^n. 
  \end{equation*}
  Furthermore \( \sigma_p \) takes \( Q_T \) (respectively \( \overline{BQ_T} \))
  birationally into the subvariety \( \lie{t}_\C \oplus \overline{T_\C
  v} \) (respectively \( \lie{n} \oplus \lie{t}_\C \oplus \overline{T_\C
  v} \cong \lie{n} \oplus \sigma_p(Q_T) \)) of
\begin{equation*}
\liek_\C \oplus \lie{t}_\C \oplus \bigoplus_{j=1}^{n-1} \wedge^j \C^n \cong 
\cO(2)_p \otimes (\liek_\C \oplus \lie{t}_\C) \oplus
\bigoplus_{j=1}^{n-1} \cO(\ell_j)_p \otimes \wedge^j \C^n,
\end{equation*}
where
\( \lie{n} \) is the Lie algebra of \( N \) and \( \lie{t}_\C \) is embedded
diagonally in \( \liek_\C \oplus \lie{t}_\C \), while \( \lie{n} \oplus
\lie{t}_\C \) is embedded via \( (\xi,\eta) \mapsto (\xi + \eta, \eta) \).

Since \( Q = \overline{K_\C Q_T} = K \overline{BQ_T} \) and
\( \overline{BQ_T} \) is \( T \)-invariant, we get a birational \( K \times
T \)-equivariant surjection
\begin{equation*}
K \times_T \overline{BQ_T} \to Q.
\end{equation*}
Similarly the twistor space \( \twist_Q \) of \( Q \) is the closure in \( \PP^1
\times \mathcal{R} \) of the \( K_\C \)-sweep of the twistor space
\( \twist_{Q_T} \) of the image \( Q_T \) in \( \mathcal{R} \) of the hypertoric
variety \( M_T\hkq T_H \). We have
\begin{equation*}
\twist_Q = \overline{K_\C \twist_{Q_T}} = K\overline{B \twist_{Q_T}}
\end{equation*}
and there is a birational surjection
\begin{equation*}
K \times_T \overline{B \twist_{Q_T}} \to \twist_Q.
\end{equation*}
Moreover \( \tilde{\sigma} \) restricts to \( T \)-equivariant birational
morphisms
\begin{equation*}
\tilde{\sigma}|_{\twist_{Q_T}} : \twist_{Q_T} \to \cO(2) \otimes \lie{t}_\C \oplus \overline{T_\C v}
\subseteq \cO(2) \otimes (\liek_\C \oplus \lie{t}_\C) \oplus
\bigoplus_{j=1}^{n-1} \cO(\ell_j) \otimes \wedge^j \C^n
\end{equation*}
and \( \tilde{\sigma}|_{\overline{B\twist_{Q_T}}} \colon
\overline{B\twist_{Q_T}} \to \cO(2) \otimes (\lie{n} \oplus
\lie{t}_\C) \oplus \overline{T_\C v}. \) Thus the twistor space \(
\twist_Q \) is birationally equivalent to \( K \times_T ((\cO(2)
\otimes \lie{n}) \oplus \twist_{Q_T}). \)
\end{remark}

\section{The twistor space of the universal hyperk\"ahler implosion
for \( \SU(n) \)}
\label{sec:twist-space-univ}

In this section we will describe the full structure of the twistor
space \( \twist_Q \) of \( Q \) in terms of the embedding \( \sigma \)
of \( Q \) in the space of holomorphic sections of the vector bundle
\( \cO(2) \otimes (\lie k_\C \oplus \lie t_\C) \oplus
\bigoplus_{j=1}^{n-1} \cO(\ell_j) \otimes \wedge^j\C^n \) over \( \PP^1 \),
and consider the cases when \( n=2 \) and \( n=3 \) in detail. The embedding
\( \sigma \) gives us an embedding \( \sigma_\twist \) of the twistor
space \( \twist_Q = \PP^1 \times Q \) of \( Q \) into \( \PP^1 \times
\mathcal{R} \), where as always
\begin{equation*}
  \mathcal{R} = 
  H^0(\PP^1,
  (\cO(2) \otimes (\lie k_\C \oplus  \lie t_\C)) \oplus 
  \bigoplus_{j=1}^{n-1}  \cO(\ell_j) \otimes \wedge^j\C^n).
\end{equation*}
This map \( \sigma_\twist \) is not holomorphic; however its
composition \( \tilde{\sigma} \) with the natural evaluation map from
\(  \PP^1 \times \mathcal{R} \) to
\begin{equation*}
  (\cO(2) \otimes (\lie k_\C \oplus  \lie t_\C)) \oplus 
  \bigoplus_{j=1}^{n-1}  \cO(\ell_j) \otimes \wedge^j\C^n
\end{equation*}
is holomorphic and \( K \times T \times \SU(2) \)-equivariant (see
Remark~\ref{remsigmatilde}), and \( \tilde{\sigma} \) is generically
injective by Theorem~\ref{thmsigmatilde}. Indeed as we saw at the end of the
last section, \( \tilde{\sigma} \) is injective on the dense subset of
\( \twist_Q = \PP^1 \times Q \) which is the union over all \( (\sim,
\cO) \) of the \( \SU(2) \)-sweep of \( \{[1:0]\} \times
Q_{[\sim,\cO]}^{\circ} \).

It follows that if \( \q \in Q \) lies in the stratum \( Q_{[\sim,
\cO]} \) indexed by equivalence relation \( \sim \) and nilpotent
coadjoint orbit \( \cO \), then we can find an open neighbourhood \(
U_{\q} \) of \( \q \) in \( Q_{[\sim, \cO]} \) and a closed subset \(
B_{\q} \) of \( \PP_1 \) of arbitrarily small area such that the
restriction of \( \tilde{\sigma} \) to the open subset \( (\PP^1
\setminus B_{\q})\times U_{\q} \) of \( \twist_Q \) is a holomorphic
embedding.  Since we can choose \( B_{\q} \) sufficiently small that
there is some \( s \in \SU(2) \) for which \( s(\PP^1 \setminus
B_{\q}) \) contains the points corresponding to the complex structures
\( i,j,k \) on \( Q \), it follows that the hypercomplex structure on
\( Q \) and thus the complex structure on \( \twist_Q \) are
determined by the embedding \( \sigma \).

Now consider the holomorphic section \( \omega_{[\sim,\cO]} \) of
\begin{equation*}
  \wedge^2 T^*_{F,Q_{[\sim,\cO]}} \otimes \cO(2)
\end{equation*}
where \( T^*_{F,Q_{[\sim,\cO]}} \) is the tangent bundle along the
fibres of the restriction of \( \pi\colon \twist_Q = \PP^1 \times Q
\to \PP^1 \) to \( \PP^1 \times Q_{[\sim,\cO]} \). The reasoning above
allows us to consider the restriction of \( \omega_{[\sim,\cO]} \) to
\( \{ [1:0] \} \times Q^\circ_{[\sim,\cO]} \).  By Theorem~\ref{thm6.8} this
can be identified with
\begin{equation*}
  K_\C \times_{(K_\sim)_\C} ((K_\sim)_\C \times_{R_{[\sim,\cO]}  }
  Q_{[\sim,\cO]}^{\circ,JCF})
\end{equation*}
where \( Q_{[\sim,\cO]}^{\circ,JCF} \) can in turn be identified with
an open subset of a hypertoric variety \( Q_{T,[\sim,\cO]} \) and
\(K_\C/(K_\sim)_\C\) and \( (K_\sim)_\C/R_{[\sim,\cO]} \) can be
identified with coadjoint orbits in \( \lie k_\C \) and \( (\lie
k_\sim)_\C \). The restriction of \( \omega_{[\sim,\cO]} \) is now
obtained from the Kirillov--Kostant construction as in
\S \ref {sec:nilpotent-cone}, combined with the holomorphic section of
\begin{equation*}
  \wedge^2 T^*_{F,Q_{T,[\sim,\cO]}} \otimes \cO(2)
\end{equation*}
associated to the twistor space \( \twist_{Q_{T,[\sim,\cO]}} \) of the
hypertoric variety \( Q_{T,[\sim,\cO]} \).

Finally, as in Remark~\ref{nilkhnought}, the real structure on the twistor
space \( \twist_Q \) is determined by the embedding \( \sigma_\twist
\) and the real structure on \(  \PP^1 \times \mathcal{R} \) determined by
the real structure \( \zeta \mapsto -1/\bar{\zeta} \) on \( \PP^1 \),
together with the real structures \( \eta \mapsto -\bar{\eta}^T \) on
\( \lie k_\C \) and \( \lie t_\C \) and the real structures on \(
\wedge^j \C^n \) induced by the standard real structure on \( \C^n \).

\begin{example}
  Consider the case when \( n=2 \) and \( K=\SU(2) \). Then \( M \) is the space
  of quivers of the form
  \begin{equation}
    \label{quiver2}
    \C \stackrel[\beta]{\alpha}{\rightleftarrows}
    \C^2 
  \end{equation}
  and \( Q = M\hkq \SU(1) = M \cong \C^2 \oplus (\C^2)^* \cong
  \mathbb{H}^2 \) (see \cite{DKS} Example 8.5). There are two
  equivalence relations on \( \{1,2\} \); let \( \sim_1 \) denote the
  equivalence relation with one equivalence class \( \{1,2\} \) and let
  \( \sim_2 \) denote the equivalence relation with two equivalence
  classes \( \{1\} \) and \( \{2\} \). Then \( (\liek_{\sim_1})_\C \) has two
  nilpotent orbits, \( \cO_{1,0} = \{ 0 \} \) and the orbit \( \cO_{1,1} \) of
\begin{equation*}
\begin{pmatrix} 0 & 1 \\ 0 & 0
\end{pmatrix},
\end{equation*}
while the only nilpotent orbit in
\( (\liek_{\sim_2})_\C \) is \( \cO_{2,0} = \{ 0 \} \). The corresponding
stratification of \( Q = \mathbb{H}^2 \) is
\begin{equation*}
Q = Q_{[\sim_1,\cO_{1,0}]} \sqcup Q_{[\sim_1,\cO_{1,1}]} \sqcup Q_{[\sim_2,\cO_{2,0}]}
\end{equation*}
where
\begin{equation*}
Q_{[\sim_1,\cO_{1,0}]}=  Q_{[\sim_1,\cO_{1,0}]}^\circ = \{ (0,0)\}.
\end{equation*}
Moreover \( Q_{[\sim_1,\cO_{1,1}]}= Q_{[\sim_1,\cO_{1,1}]}^\circ  \)
consists of nonzero quivers of the form Eq.~\eqref{quiver2} satisfying the
hyperk\"aher moment map equations for the action of \( \Un(1) \), so by
Example 2.7 they belong to the \( K \)-sweep of the set of quivers
satisfying
\begin{equation*}
\alpha = \begin{pmatrix} \sqrt{d} \\ 0 \end{pmatrix}, \,\,\, \beta = (0 \,\, \sqrt{d})
\end{equation*}
for real and strictly positive \( d \). The quotient of \( Q_{\sim_1} =
Q_{[\sim_1,\cO_{1,0}]} \sqcup Q_{[\sim_1,\cO_{1,1}]} \) by the action of
\( \Un(1) \) is the nilpotent cone in the Lie algebra \( \liek_\C \) of
\( \SL(2,\C) \), with the quotient map given by \( (\alpha,\beta) \mapsto
\alpha \beta \).

Finally \( Q_{[\sim_2,\cO_{2,0}]}^\circ \) consists of the quivers
Eq.~\eqref{quiver2} such that the \( 2 \times 2 \) matrix \( \alpha
\beta \) has distinct eigenvalues, while its sweep
\begin{equation*}
Q_{[\sim_2,\cO_{2,0}]} = \SU(2) Q_{[\sim_2,\cO_{2,0}]}^\circ
\end{equation*}
by the action of \( \SU(2) \) which rotates the complex structures on \( Q \)
consists of the quivers Eq.~\eqref{quiver2} such that the \( 2 \times 2 \)
matrix
\begin{equation*}
(u \alpha + v \beta^*) (-v \alpha^* + u \beta)
\end{equation*}
has distinct eigenvalues for some (and hence generic) choice of \( (u,v)
\in \C^2 \).

Now consider the map
\begin{equation*}
  \sigma\colon Q = \mathbb{H}^2 \to \mathcal{R} = H^0(\PP^1, \cO(2)
  \otimes (\liek_\C \oplus \lie{t}_\C) \oplus \cO(1) \otimes \C^2).
\end{equation*}
The projection of \( \sigma \) onto
\begin{equation*}
H^0(\PP^1, \cO(1) \otimes \C^2) \cong H^0(\PP^1, \cO(1)) \otimes \C^2 \cong \C^2 \otimes \C^2
\end{equation*}
takes a quiver Eq.~\eqref{quiver2} to the section of \( \cO(1) \otimes
\C^2 \) over \( \PP^1 \) given by
\begin{equation*}
(u,v) \mapsto u\alpha + v \beta^*,
\end{equation*}
and hence is bijective. Moreover the projection onto \( H^0(\PP^1,
\cO(2) \otimes (\liek_\C \oplus \lie{t}_\C)) \) is the map induced by
the hyperk\"ahler moment maps for the \( K\times T \)-action on \( Q \), and
so \( \sigma \) embeds \( Q \) into \( \mathcal{R} \) as the graph of this map.

Since \( Q= \mathbb{H}^2 \) is a flat hyperk\"ahler manifold its twistor
space is the vector bundle
\begin{equation*}
\twist_Q = \cO \otimes (\C^2 \oplus (\C^2)^*)
\end{equation*}
over \( \PP^1 \). We can cover \( \PP^1 \) with two open subsets \( \{\zeta \in
\PP^1: \zeta \neq \infty\} \) and \( \{\zeta \in \PP^1: \zeta \neq 0\} \),
and thus cover \( \twist_Q \) with two coordinate patches where \( \zeta
\neq \infty \) and where \( \zeta \neq 0 \) with coordinates
\begin{equation*}
(\alpha, \beta, \zeta) \mbox{ and } (\tilde{\alpha}, \tilde{\beta}, \tilde{\zeta})
\end{equation*}
for \( \alpha,\tilde{\alpha} \in \C^2  \) and \( \beta, \tilde{\beta} \in
(\C^2)^* \) related by
\begin{equation*}
\tilde{\zeta} = 1/\zeta, \,\, \tilde{\alpha} = \alpha /\zeta, \,\, \tilde{\beta} = \beta /\zeta;
\end{equation*}
we have similar coordinates on \( \cO(2) \otimes (\liek_\C \oplus
\lie{t}_\C) \oplus \cO \otimes \C^2 \).  With respect to these
coordinates the map \( \tilde{\sigma}\colon \twist_Q \to \cO(2)
\otimes (\liek_\C \oplus \lie{t}_\C) \oplus \cO(1) \otimes \C^2 \) is
given by
\begin{equation*}
  \tilde{\sigma}(\alpha,\beta,\zeta) = (\alpha \beta -
  \frac{\mathrm{tr}(\alpha \beta)}{2} I_2, \beta \alpha, \alpha,
  \zeta) 
\end{equation*}
(see \S \ref {sec:towards-an-embedding}). Observe that where the
coordinate \( \alpha \) is nonzero (or equivalently defines an
injective linear map \( \alpha\colon \C \to \C^2 \)) then we can
recover \( \alpha, \beta \) and \( \zeta \) from \(
\tilde{\sigma}(\alpha,\beta,\zeta) \), but that \(
\tilde{\sigma}(0,\beta,\zeta) = (0,0,0,\zeta) \) for any \( \beta
\). Thus \( \tilde{\sigma} \) is only generically injective on the
twistor space \( \twist_Q \).

Recall from \S \ref {sec:twistor-spaces} that the real structure on \(
\twist_Q \) is given in these coordinates by
\begin{equation*}
  (\alpha, \beta, \zeta) \mapsto (\bar{\beta}/\bar{\zeta}, -
  \bar{\alpha}/\bar{\zeta}, -1/\bar{\zeta}); 
\end{equation*}
this is induced via the embedding \( \sigma_\twist \) from the real
structure on \( \PP^1 \times \mathcal{R} \) determined by the standard
real structures on \( \PP^1 \), \( \liek_\C \), \( \lie{t}_\C \) and \( \C^2 \).

Lastly observe that \( Q_{[\sim_1,\cO_{1,0}]}=
Q_{[\sim_1,\cO_{1,0}]}^\circ = \{ (0,0)\} \) and that
\( Q_{[\sim_1,\cO_{1,1}]}= Q_{[\sim_1,\cO_{1,1}]}^\circ  \) can be
identified with the regular nilpotent orbit in \( \liek_\C \), and the
holomorphic symplectic form on each is obtained by the
Kirillov-Kostant construction. Moreover
\begin{equation*}
Q_{[\sim_2,\cO_{2,0}]}^\circ \cong K_\C \times _{T_\C} Q_{[\sim_2,\cO_{2,0}]}^{\circ,JCF}
\end{equation*}
where \( Q_{[\sim_2,\cO_{2,0}]}^{\circ,JCF} \) can be identified with an
open subset of \( \mathbb{H} \); the holomorphic symplectic form on this
is obtained from the Kirillov-Kostant construction on the adjoint
orbit \( K_\C/T_\C \) combined with the flat structure on \( \mathbb{H} \).
\end{example}

\begin{example}
  Finally let us consider briefly the case when \( n=3 \) and
  \( K=\SU(3) \). We have five equivalence relations \( \sim_{123},
  \sim_{12,3}, \sim_{13,2}, \sim_{23,1} \) and \( \sim_{1,2,3} \) on
  \( \{1,2,3\} \) given by the partitions
\begin{equation*}
\{ \{1,2,3\}\}, \,\, \{ \{1,2\}, \{3\}\}, \,\, \{ \{1,3\}, \{2\} \}, \,\, \{ \{ 2,3\}, \{1\} \}, \,\,
\{ \{1\}, \{2\}, \{3\} \}.
\end{equation*}
The Lie algebra of \( (K_{\sim_{123}})_\C =
K_\C \) has three nilpotent orbits \( \cO_{123,j} \) for \( j=1,2,3 \), and the
corresponding strata \( Q_{[\sim_{123},\cO_{123,j}]} \) can be described
using Example 2.8. At the other extreme the Lie algebra of
\( (K_{\sim_{1,2,3}})_\C = T_\C \) has only the zero nilpotent orbit, and
the structure of the stratum \( Q_{[\sim_{1,2,3},\{0\}]} \) is similar to
that of \( Q_{[\sim_2,\cO_{2,0}]} \) in Example 7.1.  In between the Lie
algebras of
\begin{equation*}
(K_{\sim_{12,3}})_\C \cong (K_{\sim_{13,2}})_\C \cong (K_{\sim_{23,1}})_\C \cong \GL(2,\C)
\end{equation*}
have two nilpotent orbits each, the zero orbit and the regular
nilpotent orbit. These give us the six remaining strata
\( Q_{[\sim,\cO]} = \SU(2)Q_{[\sim,\cO]}^\circ  \), for each of which the
open subset \( Q_{[\sim,\cO]}^\circ \) of \( Q_{[\sim,\cO]} \) has the form
\begin{equation*}
Q_{[\sim,\cO]}^\circ \cong K_\C \times_{\GL(2,\C)} \left( \GL(2,\C) \times_{R_{[\sim,\cO]}} 
  Q_{[\sim,\cO]}^{\circ,JCF} \right)
\end{equation*}
where \( R_{[\sim,\cO]} \) is the
stabiliser of \( \cO \) in \( \GL(2,\C) \) and \( Q_{[\sim,\cO]}^{\circ,JCF} \)
can be identified with an open subset of \( \mathbb{H} \).
\end{example}

\section{More general compact Lie groups}
\label{sec:general-compact-K}
Our future aim and the main motivation for this paper is to be able to
construct the hyperk\"ahler implosion of a hyperk\"ahler manifold \( M \)
with a Hamiltonian action of any compact Lie group \( K \). For this it
suffices to construct a universal hyperk\"ahler implosion
\( (T^*K_\C)_{\mathrm{hkimpl}} \) of the hyperk\"ahler manifold \( T^*K_\C \)
(see \cite{Kronheimer:cotangent}) with suitable properties. In
particular \( (T^*K_\C)_{\mathrm{hkimpl}} \) should be a stratified
hyperk\"ahler space with a Hamiltonian action of \( K \times T \) where
\( T \) is a maximal torus of \( K \); then we can define the hyperk\"ahler
implosion \( M_{\mathrm{hkimpl}} \) as the hyperk\"ahler quotient of \( M
\times (T^*K_\C)_{\mathrm{hkimpl}} \) by the diagonal action of \( K \).

As Guillemin, Jeffrey and Sjamaar observed in \cite{GJS} for
symplectic implosion, it suffices to consider the case when \( K \) is
semisimple, connected and simply connected.  In this case, as was
noted in \S \ref {sec:symplectic-implosion} above, we can embed the
universal symplectic implosion \( (T^*K)_{\mathrm{impl}} \) in the
complex affine space
\begin{equation*}
E = \bigoplus_{\varpi \in \Pi} V_\varpi
\end{equation*}
where \( \Pi \) is a minimal generating set for the monoid of dominant
weights; here if \( \varpi \in \Pi \) then \( V_\varpi \) is the \( K
\)-module with highest weight \( \varpi \) and \( T \) acts on \(
V_\varpi \) as multiplication by this highest weight. As we recalled
in \S \ref {sec:symplectic-implosion}, \( (T^*K)_{\mathrm{impl}} \) is
embedded in \( E \) as the closure of the \( K_\C \)-orbit \( K_\C v
\) where \( v \) is the sum \( \sum_{\varpi \in \Pi} v_\varpi \) of
highest weight vectors \( v_\varpi \in V_\varpi \), or equivalently \(
(T^*K)_{\mathrm{impl}} = K (\overline{T_\C v}) \) where \(
\overline{T_\C v} \) is the toric variety associated to the positive
Weyl chamber \( \lie{t}_+ \).

This representation \( E = \bigoplus_{\varpi \in \Pi} V_\varpi \) of
\( K \) gives us an identification of \( K \) with a subgroup of \(
\prod_{\varpi \in \Pi} \SU(V_\varpi) \).  Theorem~\ref{thmsigma} gives us an
embedding \( \sigma \) of the universal hyperk\"ahler implosion for \(
\prod_{\varpi \in \Pi} K_\varpi \), where \( K_\varpi = \SU(V_\varpi)
\) has maximal torus \( T_\varpi \), in
\begin{equation*}
\prod_{\varpi \in \Pi} \mathcal{R}_\varpi = H^0(\PP^1, \bigoplus_{\varpi \in \Pi} \cO(2) \otimes (\liek_\varpi 
\oplus \lie{t}_\varpi)_\C \oplus \bigoplus_{j=1}^{\dim V_\varpi -1} \cO(\ell_j) \otimes \wedge^j V_\varpi).
\end{equation*}
We may assume that the inclusion of \( K \) in \( \prod_{\varpi \in \Pi} K_\varpi  \) restricts to an inclusion of its maximal torus \( T \) in the maximal torus \( \prod_{\varpi \in \Pi} T_\varpi  \) of \( \prod_{\varpi \in \Pi} K_\varpi  \)
as \( K \cap \prod_{\varpi \in \Pi} T_\varpi \), and that the intersection \( \liek \cap \prod_{\varpi \in \Pi} (\lie{t}_\varpi)_+  \) in \( \prod_{\varpi \in \Pi} \liek_\varpi  \) is a positive Weyl chamber \( \lie{t}_+ \) for \( K \).
Then the hypertoric variety for \( T \) associated to the hyperplane arrangement given by the root planes
in \( \lie{t} \) embeds  in the hypertoric variety for \( \prod_{\varpi \in \Pi} T_\varpi  \) associated to the
hyperplane arrangement given by the root planes in \( \prod_{\varpi \in \Pi} \lie{t}_\varpi  \), and
thus maps into \( \prod_{\varpi \in \Pi} \mathcal{R}_\varpi \). 

By analogy with the situation
described in \cite{GJS} for symplectic implosion and using Remark~\ref{rem6.last}, we expect the twistor 
space  \( \twist_{(T^*K_\C)_{\mathrm{hkimpl}}} \) for the universal hyperk\"ahler implosion \( (T^*K_\C)_{\mathrm{hkimpl}} \)
to embed in the intersection in \( \PP^1 \times  \prod_{\varpi \in \Pi} \mathcal{R}_\varpi \) of the corresponding twistor space for \(  (T^* (\prod_{\varpi \in \Pi} K_\varpi)_\C)_{\mathrm{hkimpl}} \) and 
\begin{equation*}
\PP^1 \times  H^0(\PP^1,  \cO(2) \otimes (\liek_\C 
\oplus \lie{t}_\C) \oplus \bigoplus_{\varpi \in \Pi} \bigoplus_{j=1}^{\dim V_\varpi -1} \cO(\ell_j) \otimes \wedge^j V_\varpi).
\end{equation*}
Moreover we expect the image of this embedding to be the closure of
the \( K_\C \)-sweep of the image of the twistor space of the
hypertoric variety for \( T \) associated to the hyperplane
arrangement given by the root planes in \( \lie{t} \), and that the
full structure of the twistor space \(
\twist_{(T^*K_\C)_{\mathrm{hkimpl}}} \) for the universal
hyperk\"ahler implosion is obtained from this embedding as in
\S \ref {sec:twist-space-univ} above.

\end{document}